\PassOptionsToPackage{prologue,dvipsnames}{xcolor}

\documentclass[12pt]{amsart}

\usepackage[latin1]{inputenc}
\usepackage[active]{srcltx}
\usepackage{fullpage}
\usepackage{amsfonts,enumerate}
\usepackage{comment, enumerate}
\usepackage[mathscr]{euscript}
\usepackage{epsfig}
\usepackage{latexsym}
\usepackage[all]{xy}
\usepackage{amssymb}
\usepackage{amsthm}
\usepackage{amsmath}
\usepackage{amsxtra}
\usepackage{float}
\usepackage{tikz}
\usepackage{tikz-cd}

\usepackage[dvipsnames]{xcolor}
\usepackage[tickmarkheight=.5em,textwidth=\marginparwidth,textsize=small]{todonotes}
\usepackage[backref=page]{hyperref}
\hypersetup{citecolor=MidnightBlue,
	colorlinks=true,
	linkcolor=MidnightBlue,
	urlcolor=MidnightBlue}


.tfm
.tfm


\theoremstyle{plain}
\newtheorem{prop}{Proposition}[section]
\newtheorem{introthm}{Theorem}
\newtheorem{thm}[prop]{Theorem}
\newtheorem{coro}[prop]{Corollary}

\newtheorem{lemma}[prop]{Lemma}
\newtheorem{conj}{Conjecture}

\newtheorem{thmA}{Theorem}

\newtheorem{thmB}{Theorem}

\newtheorem{thmC}{Theorem}

\theoremstyle{definition}
\newtheorem{defi}[prop]{Definition}

\theoremstyle{remark}

\newtheorem{exm}[prop]{Example}
\newtheorem{remark}[prop]{Remark}

\numberwithin{table}{section}


\DeclareMathOperator{\Jac}{Jac}

\DeclareMathOperator{\cond}{cond}
\DeclareMathOperator{\Frob}{Frob}

\DeclareMathOperator{\End}{End}

\DeclareMathOperator{\Gal}{Gal}

\DeclareMathOperator{\Ind}{Ind}

\DeclareMathOperator{\ord}{ord}


\newcommand{\bfP}{\mathcal P}

\newcommand{\F}{\mathbb F}
\newcommand{\bfC}{\mathcal C}
\newcommand{\bfD}{\mathcal D}

\newcommand{\J}{\mathcal J}

\newcommand{\Om}{{\mathcal{O}}}

\newcommand{\Disc}{\Delta}

\newcommand{\GL}{{\rm GL}}

\newcommand{\HGM}{\mathcal H}

\def\gent{\chi_{\idF}}

\newcommand{\newform}{\mathscr F}
\def\mot{\mathfrak h}
\newcommand{\mubb}{\mbox{$\raisebox{-0.59ex}
  {$l$}\hspace{-0.18em}\mu\hspace{-0.88em}\raisebox{-0.98ex}{\scalebox{2}
  {$\color{white}.$}}\hspace{-0.416em}\raisebox{+0.88ex}
  {$\color{white}.$}\hspace{0.46em}$}{}}

\makeatletter

\newcommand{\verbatimfont}[1]{%
	\renewcommand{\verbatim@font}{\ttfamily #1}%
}
\renewcommand{\verbatim@font}{\ttfamily\small}
\makeatother

\def\ZZ{\mathbb Z}

\def\QQ{\mathbb Q}

\def\PP{\mathbb P}

\def\CC{\mathbb C}



\def\<#1>{{\left\langle{#1}\right\rangle}}



\def\Z{{\mathbb Z}}             
\def\Q{{\mathbb Q}}             
\def\idK{{\mathfrak l}}               
\def\idF{{\lambda}}               



\def\id#1{{\mathfrak{#1}}}      
\def\normid#1{{\norm{(\id{#1})}}}
\DeclareMathOperator{\norm}{{\mathscr N}}

\DeclareMathOperator{\trace}{{\mathrm{Tr}}}

\newcommand{\lmfdbec}[3]{\href{http://www.lmfdb.org/EllipticCurve/Q/#1/#2/#3}{{\text{\rm#1-#2#3}}}}



\begin{document}

\title{On the generalized Fermat equation of signature $(5,p,3)$}




\author{Ariel Pacetti}
\address{Center for Research and Development in Mathematics and Applications (CIDMA),
	Department of Mathematics, University of Aveiro, 3810-193 Aveiro, Portugal}
\email{apacetti@ua.pt}
\thanks{The first author is supported by CIDMA under the Portuguese
  Foundation for Science and Technology (FCT,
  https://ror.org/00snfqn58) Multi-Annual Financing Program for R\&D
  Units, grants UID/4106/2025 and UID/PRR/4106/2025.}

\author{Lucas Villagra Torcomian}
\address{Department of Mathematics, Simon Fraser University, 8888 University Drive, Burnaby,
British Columbia, V5A 1S6, Canada.}
\email{lvillagr@sfu.ca}

\thanks{The second author is supported by a PIMS-Simons postdoctoral fellowship.}

\keywords{Darmon's program, modular method, generalized Fermat equation, hypergeometric motives}
\subjclass[2020]{11D41, 11F80}

\begin{abstract}
  In this article we study solutions to the generalized Fermat
  equation $x^q+y^p+z^r=0 $ using hypergeometric motives within the
  framework of the modular method. In doing so, we give an explicit
  description of the ramification behavior at primes dividing $2qr$
  and analyze the contribution of trivial solutions. We identify a
  general obstruction to the modular method that accounts for its
  failure in many instances. As an application, assuming a standard
  large image conjecture, we prove that the previous equation
  admits no nontrivial primitive solutions $(a,b,c)$ with $3 \nmid c$,
  when $q=5,$ $r=3$ and $p$ is a prime sufficiently large.
\end{abstract}


\maketitle
{
	\hypersetup{linkcolor=black}
	\setcounter{tocdepth}{1}
	\tableofcontents
}

\section{Introduction}

Fermat's Last Theorem inspired a broader investigation of equations of similar form, most notably the \emph{generalized Fermat equation}
\begin{equation}
	\label{eq:GFE}
	x^q + y^p + z^r = 0.
\end{equation}
A solution $(a,b,c)\in\ZZ^3$ to~\eqref{eq:GFE} is called
\emph{primitive} if $\gcd(a,b,c)=1$, and is said to be \emph{trivial}
if $abc=0$.  We refer to the triple of exponents $(q,p,r)$ as the
\emph{signature} of~\eqref{eq:GFE}.  It is expected (see Beal's
conjecture in \cite{Beal}) that if $\max\{q,p,r\}>2$, then there are
no non-trivial primitive solutions to (\ref{eq:GFE}). We recommend the
reader to look at the very nice survey \cite{MR3526934}.

The strategy followed by Wiles in \cite{Wiles}, based on Galois
representations and modularity, is commonly referred to as the
\emph{modular method}. Its main steps are the following.
\begin{enumerate}[(I)]
\item Attach to a putative solution to (\ref{eq:GFE}) a
  geometric object $\bfC$ defined over a small degree number field $K$
  so that the $p$-th member of the family of Galois representations
  attached to $\bfC$ has small residual ramification set
  (independent of the solution).
  
\item Prove that the geometric object is \emph{modular}, i.e.\ its
  $L$-series matches one coming from an automorphic form.

\item Prove that the $p$-th residual representation in Step (I) is
  absolutely irreducible, and compute its conductor $\id{n}$. 
  
\item Compute the space of newforms of level $\id{n}$ and prove that
  none of them is related to a solution to (\ref{eq:GFE}).
\end{enumerate}
The modular method is a remarkably powerful tool for proving
non-existence of solutions while varying along a line in the space
of signatures. For instance, the line $q=p=r$ corresponds to Fermat's
Last Theorem. Our choice of notation for the exponents
in~\eqref{eq:GFE} reflects the fact that we allow $p$ to be the
varying parameter, in accordance with the notation used in \cite{GP}.

In \cite{Darmon}, Darmon described a program based on the modular
method to prove that the generalized Fermat equation~\eqref{eq:GFE}
has no non-trivial solutions when one of the exponents is a varying
prime $p\ge 3$.  Darmon's idea consists on constructing a family of
residual Galois representations (indexed by a parameter $t$), called
\emph{Frey representations}, with the property that if $(a,b,c)$ is a
solution to~\eqref{eq:GFE}, then the specialization at $t_0=-a^q/c^r$
is unramified outside  a small and explicitly controlled set of primes,
namely those dividing $2qr$. The specialized representation coincides
with the one obtained in Step (I) of the original modular method
approach.
%
Darmon's program has been successfully applied to the cases of
signatures $(5,p,5)$ \cite{BCDF1} and $(p,p,5)$ \cite{ChenKoutsianas}.
Until now, these applications have focused on situations where the
construction of the Frey representation is explicit,  via
Frey abelian varieties arising as the Jacobians of Frey hyperelliptic
curves.

\subsection{Hypergeometric motives: a new approach}

In the recent article \cite{GP}, Golfieri and the first author show
how the geometric objects traditionally used as sources of Galois
representations in Step (I) can be replaced by a more combinatorial one, namely a
hypergeometric motive.
Within this new context, a first goal of the present article is to
give an explicit formula for the conductor at the so-called
\emph{potentially wild primes}, as required in Step~(III).

{\color{MidnightBlue}
\begin{introthm}
	\label{thm:conductor-general}
	\color{black}
  Attached to a hypothetical solution $(a,b,c)$ to
  \[
    x^q+y^p+z^r, \ \text{ with } q>r>2, \ p>2,
  \]
  there exist two Galois representations 
  \[
    {\rho}^\pm_\id{p} : \Gal(\bar{K}/K) \to \GL_2(K_\id{p}),
  \]
  where $K=\Q(\zeta_q)^+\cdot\Q(\zeta_r)^+$ and $\id{p}$ is a prime in $K$ above $p$, satisfying the following. If $\bar{\rho}^\pm_\id{p}$ is irreducible, then:
  \begin{enumerate}[(i)]
  \item $\bar{\rho}^+_{\id{p}}$ is unramified outside $pqr$ and is finite at $p$
  \item $\bar{\rho}^-_{\id{p}}$ is unramified outside $2pqr$ and is finite at $p$.
  \end{enumerate}
  The conductor exponent at primes dividing $qr$ is described in
  Corollary \ref{coro:conductor-at-r}, Proposition
  \ref{prop:conductor-at-q}, and the conductor exponent at $2$ for
  $\bar{\rho}^-_{\mathfrak{p}}$ is given in
  Proposition~\ref{prop:cond-at-2}.
\end{introthm}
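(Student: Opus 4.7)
The plan is to build on the construction of hypergeometric motives developed in \cite{GP}. First I would attach to the putative primitive solution $(a,b,c)$ the specialization at $t_0 = -a^q/c^r$ of a one-parameter family of hypergeometric motives whose hypergeometric parameters form rational tuples with denominators dividing $q$ and $r$. The appearance of the compositum $K = \Q(\zeta_q)^+\cdot\Q(\zeta_r)^+$ as the base field of the Galois representations is forced by the Galois action on these hypergeometric parameters: it is the smallest subfield of $\Q(\zeta_{qr})$ fixed by the stabilizer of the unordered pair of exponents. The two sign choices $\pm$ correspond to two inequivalent quadratic twists, adapted to the parity of $(a,b,c)$, and the two-dimensional representations $\rho^\pm_{\mathfrak{p}}$ are then obtained as the $\mathfrak{p}$-adic realizations of the appropriate motivic pieces.

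Next, the unramified assertions outside the listed sets of primes reduce to a local analysis at each finite place $\ell$. For $\ell \nmid 2pqr$, the specialization $t_0$ is an $\ell$-adic unit and $1 - t_0 = -b^p/c^r$ is also an $\ell$-adic unit (using primitivity $\gcd(a,b,c)=1$), so the hypergeometric motive has good reduction at $\ell$ by the criterion in \cite{GP}. At the varying prime $p$, the key input is that $v_{\mathfrak{p}}(1-t_0) = p\cdot v_{\mathfrak{p}}(b)$ is divisible by $p$; translating this into the local monodromy picture yields that the local representation extends to a finite flat group scheme over the ring of integers of $K_{\mathfrak{p}}$, giving the ``finite at $p$'' assertion. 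This step is the hypergeometric analog of the classical ``finite flat at $p$'' property of Frey elliptic curves.

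The distinction between $\bar{\rho}^+_{\mathfrak{p}}$ and $\bar{\rho}^-_{\mathfrak{p}}$ at the prime $2$ is handled by an explicit quadratic-twist argument: exactly one of the two specializations (depending on a congruence condition modulo $2$ on $(a,b,c)$) is already unramified at the primes above $2$, while for the other variant the ramification is at worst tame and can be computed locally. This yields the conductor exponent recorded in Proposition~\ref{prop:cond-at-2}, and explains the asymmetry between the two cases in the statement.

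The main obstacle, and the technical core of the theorem, is the analysis at the potentially wild primes dividing $qr$. Here the underlying hypergeometric family is already ramified, independently of the specialization, and the conductor exponent depends on a delicate interaction between the local monodromy at these primes and the $\mathfrak{p}$-adic valuations of the coordinates $a$ and $c$. The plan is to reduce this to an explicit computation of the local monodromy via the standard description of hypergeometric sheaves, followed by a case analysis according to whether the primes above $q$ divide $a$ and whether the primes above $r$ divide $c$. This analysis is carried out in Corollary~\ref{coro:conductor-at-r} and Proposition~\ref{prop:conductor-at-q}, which together yield the claimed conductor formulas and complete the proof.
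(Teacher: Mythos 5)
Your high-level framing (attach the specialization of a hypergeometric family at $t_0=-a^q/c^r$, reduce ramification to a local analysis, case by case) matches the structure of the paper, but several of the key mechanisms you propose are not what actually makes the argument go, and two of them are wrong in a way that would derail the proof.

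First, the two representations $\rho^\pm_{\id{p}}$ are \emph{not} quadratic twists of each other. They come from two genuinely different hypergeometric motives with different parameter tuples,
\[
\HGM^+(t)=\HGM\left(\left(\tfrac1r,-\tfrac1r\right),\left(\tfrac1q,-\tfrac1q\right)\mid t\right),\qquad
\HGM^-(t)=\HGM\left(\left(\tfrac1{2r},-\tfrac1{2r}\right),\left(\tfrac1q,-\tfrac1q\right)\mid t\right),
\]
with the $-$ case additionally twisted by $\theta_c$ after specialization. Replacing $1/r$ by $1/(2r)$ changes the monodromy at $\infty$ from order $r$ to order $2r$; this is a different local system, not a twist adapted to ``the parity of $(a,b,c)$.'' Consequently your proposed explanation of the asymmetry at $2$ is also off: $2$ is automatically a tame prime for $\HGM^+$ simply because $2\nmid qr$, whereas $\HGM^-$ has $2r$ in the denominator of its parameters and so $2$ becomes a potentially wild prime; this is a structural feature of the two families, not a mod-$2$ congruence condition on the solution. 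Related to this, at primes $\ell\mid b$ with $\ell\nmid 2qr$ the motive has \emph{multiplicative}, not good, reduction; what is true is that the residual representation $\bar\rho^\pm_{\id{p}}$ is unramified there because the relevant $\ell$-adic valuation is divisible by $p$, and you need to make that distinction to get the ``unramified outside $pqr$ (resp. $2pqr$)'' statement.

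Second, for the conductor exponent at primes dividing $qr$ the paper does not work with the local monodromy of the hypergeometric sheaf directly. It first reduces, via the Beukers--Cohen--Mellit type congruences $\HGM^{\pm}(t_0)\equiv \HGM((\tfrac1r,-\tfrac1r),(1,1)\mid t_0)$ (resp.\ $(\tfrac1{2r},-\tfrac1{2r})$) modulo primes above $q$, to Darmon's hyperelliptic Frey curves $C_r^\pm(t)$, and then computes conductors of their Jacobians using cluster pictures and the semistability/conductor formulas of Dokchitser--Dokchitser--Maistret--Morgan; the wild (Swan) part is transported through the congruence, and the tame part is determined by potential (good vs.\ multiplicative) reduction. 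The analysis at primes above $q$ is then deduced from the one at $r$ via the functional-equation-type symmetry $\HGM((a,b),(c,d)\mid t)\simeq\HGM((-c,-d),(-a,-b)\mid t^{-1})$. None of this is visible from ``local monodromy via the standard description of hypergeometric sheaves,'' and it is precisely this reduction to an explicit hyperelliptic model that makes the wild conductor exponent computable. As written, your outline does not supply a method that would actually yield the case table in Corollary~\ref{coro:conductor-at-r} and Proposition~\ref{prop:conductor-at-q}.
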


}

\subsection{A general obstruction}
The existence of the trivial solutions
\[
\{(\pm 1, \mp 1,0),(\pm 1,0,\mp 1), (0, \pm 1,\mp 1)\}
\]
to equation~\eqref{eq:GFE} constitutes an obstruction to the modular
method. Following the approach of \cite{GP}, we prove that only the first
and last pair of solutions correspond to abelian varieties with
complex multiplication, and we provide explicit formulas for their
conductors (see Lemmas~\ref{lemma:cond-at-t=0}
and~\ref{lemma:cond-at-t=inf}).

It is well known that sometimes the modular method fails to prove
non-existence of non-trivial primitive solutions.  In some cases (like
for signature $(3,p,2)$) the failure can be explained by the existence
of particular solutions to (\ref{eq:5p3}), but in others the reason
for the failure remains mysterious.

If $(a,c,s,\ell,m,n,u,v) \in \Z^2 \times \Z_{\ge 0}^6$, with
$ac \neq 0$, satisfies the relation
\begin{equation}
  \label{eq:ghost-eqn}
q^s r^\ell a^q \pm q^m r^n+ q^ur^vc^r=0,
\end{equation}
then the specialization of Frey's representation (and also of the
associated hypergeometric motive) at $t_0:=-a^qq^{s-u}r^{\ell-v}/c^r$
is also unramified outside $qr$. We call solutions to
(\ref{eq:ghost-eqn}) \emph{ghost solutions}; they are studied in
\S\ref{section:obstruction}.  If the motive attached to a ghost
solution is modular and its conductor lies among the possible
conductors of Hilbert modular forms that must be eliminated in the
modular method, then the method fails.  This phenomenon for Frey
representations appears not to have been previously observed.

In the particular case $q=5$, $r=3$ we found several solutions
to~\eqref{eq:ghost-eqn}, listed in Table~ \ref{table:ghost-sols}.  All of them are modular, and two of them lie
in the space of Hilbert modular forms
$S_2(\Gamma_0(3^3 \cdot (\sqrt{5})^3))$ over $\Q(\sqrt{5})$, one of
the spaces that must be analyzed in the elimination step. In the
present article we prove that all other forms without complex
multiplication can be discarded using classical elimination arguments.
It is unclear to us whether ghost solutions explain in general the
failure of the modular method, namely that any non-CM form that cannot be
discarded arises from a specialization coming from a solution
to~\eqref{eq:ghost-eqn}.

\subsection{Signature $(5,p,3)$} The main focus of the present article is the study of solutions to the generalized Fermat equation
\begin{equation}\label{eq:5p3}
	x^5 + y^p + z^3 = 0.
\end{equation}
According to Beal's conjecture, equation~(\ref{eq:5p3}) is expected to admit only trivial primitive solutions when $p \ge 3$. For $p = 2$, it is known that there are infinitely many solutions, and they are parametrized in \cite{Edwards}. The cases $p = 3,4,5$ were completely solved in \cite{Bruin,SiksekStoll,Poonen}, respectively. Therefore, we may assume that $p \ge 7$.

Equation (\ref{eq:5p3}) is a case  of particular interest for several reasons.  First, modularity of the hypergeometric motive
attached to any solution has been established in
\cite[Theorem~6.3]{GP}.  Secondly, the associated spaces of Hilbert
modular forms are defined over $K=\mathbb{Q}(\sqrt{5})$, making the
required computations feasible in practice.

Following the approach of \cite{GP}, we show that there is a
hyperelliptic realization of the motive. That is, there exists a
hyperelliptic curve $\bfC^{+}_{5,3}(t)$ defined over $\mathbb{Q}(t)$
that gives rise to a Frey representation, when considered over $K$.
The representation evaluated at $t_0 = -a^{5}/c^{3}$
coincides with the representation ${\rho}^{+}_{\mathfrak{p}}$
appearing in Theorem~\ref{thm:conductor-general}.  The following
result summarizes the results and the obstructions obtained while
studying solutions of~\eqref{eq:GFE} for the signature $(5,p,3)$.

{\color{MidnightBlue}
\begin{thmA}\label{thmA}
  \color{black} Set
  $\bfP:=\{2,3,5,7,11,13,19,29,31,41,61,71,79,89,101,109\}$. Let $p$ be a prime
  number not in $\bfP$ and let $(a,b,c)$ be a primitive solution to
  \begin{equation*}
    x^5 + y^p + z^3 =0
  \end{equation*}
  Set $t_0:=-a^5/c^3$. Then one of the following holds:
  \begin{enumerate}
  \item The residual Galois representation
    $\overline{\rho}_{\id{p}}^+$ is reducible.
  \item The Galois representation $\rho_{\id{p}}^+$ is congruent
    modulo $p$ to the Galois representation associated to a Hilbert modular form of $\Q(\sqrt{5})$ with
    complex multiplication by $\Q(\sqrt{-3})$ or by
    $\Q(\zeta_{15})/\Q(\sqrt{5})$.
  \item The Galois representation $\rho_{\id{p}}^+$ is congruent
    modulo $p$ to ${\rho}_{\bfC_{5,3}^+(t_0),\id{p}}$, where $t_0\in\{-\frac{1}{8}, \frac{9}{8}\}$ (these two values correspond to ghost solutions).
	\end{enumerate}
\end{thmA}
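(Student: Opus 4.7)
The plan is to execute the four steps of the modular method as outlined in the introduction, with signature $(q,p,r)=(5,p,3)$, $K=\Q(\sqrt{5})$, and the hyperelliptic Frey representation attached to $\bfC_{5,3}^+(t_0)$ at $t_0=-a^5/c^3$. First I would invoke Theorem~\ref{thm:conductor-general} to get that $\bar\rho_\id{p}^+$ is unramified outside $15p$ and finite at $p$, and compute its conductor explicitly: the exponent at the prime above $3$ comes from Corollary~\ref{coro:conductor-at-r} (this prime is inert or splits in $K$ depending on the congruence class of $c$ modulo suitable things, but in any case the exponent is bounded by a small fixed constant), and the exponent at the prime $\sqrt{5}$ comes from Proposition~\ref{prop:conductor-at-q}. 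Using Tate's algorithm applied to the hyperelliptic model one should be able to distinguish the cases by the $3$-adic valuation of $c$ and the $\sqrt{5}$-adic valuation of $a$, obtaining a short finite list of possible conductors $\id{n}$.

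Next, assuming we are not in case~(1), that is $\bar\rho_\id{p}^+$ is irreducible, I would apply the modularity result \cite[Theorem~6.3]{GP} which provides a Hilbert modular form $g$ over $\Q(\sqrt{5})$ giving rise to $\rho_\id{p}^+$; standard Ribet-type level lowering (in the form valid over real quadratic fields, using Jarvis/Rajaei) together with the finiteness of $\bar\rho_\id{p}^+$ at $p$ then allows us to replace $g$ by a newform $f$ of weight $2$, trivial character, and level $\id{n}$ a divisor of the fixed conductor above, supported only at the primes above $15$. This yields a very small finite list of Hilbert newform spaces over $\Q(\sqrt{5})$, which I would enumerate explicitly (for instance $S_2(\Gamma_0(3\cdot\sqrt{5}))$, $S_2(\Gamma_0(3^3\cdot\sqrt{5}))$, $S_2(\Gamma_0(3\cdot(\sqrt{5})^3))$, $S_2(\Gamma_0(3^3\cdot(\sqrt{5})^3))$ and similar) and compute using the LMFDB / Magma.

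For each newform $f$ in the list, I would run the standard elimination argument: choose auxiliary primes $\ell\nmid 15p$ of $\Q(\sqrt{5})$, compute $a_\ell(f)$, and compare with the possible values of $\trace\rho_\id{p}^+(\Frob_\ell)$ obtained by running through all residue classes of $(a,b,c)$ modulo $\ell$ (only those compatible with $a^5+b^p+c^3\equiv 0$). The congruence $a_\ell(f)\equiv \trace\rho_\id{p}^+(\Frob_\ell)\pmod{\id{p}}$ forces $p$ to divide a certain computable integer $B_\ell(f)$; taking a few $\ell$ and the greatest common divisor produces either an empty set of possible $p$ (eliminating $f$), or a finite set of bad primes which get absorbed into the exceptional set $\bfP$. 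The forms that resist elimination are exactly (2)~the CM forms with CM by $\Q(\sqrt{-3})$ or $\Q(\zeta_{15})/\Q(\sqrt{5})$, for which $B_\ell(f)=0$ at inert $\ell$ and so the method is intrinsically blocked, and (3)~the two non-CM newforms coming from $\bfC_{5,3}^+(-1/8)$ and $\bfC_{5,3}^+(9/8)$, which by the ghost-solution analysis of \S\ref{section:obstruction} match $\rho_\id{p}^+$ at every $\Frob_\ell$ and hence cannot be separated.

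The main obstacle will be the elimination step, both in volume (the level $3^3\cdot(\sqrt{5})^3$ space is not tiny and requires a careful choice of auxiliary primes $\ell$ of small norm splitting favorably in $K$) and in pinning down the exact exceptional set $\bfP$: each resisting form that is not in categories (2) or (3) must be eliminated for all $p$ outside a small finite set, and the primes $\{7,11,13,19,29,31,41,61,71,79,89,101,109\}$ presumably arise as the union of the bad primes $p\mid B_\ell(f)$ over all relevant $f$ and $\ell$. A secondary technical point is to verify that $\bfC_{5,3}^+(-1/8)$ and $\bfC_{5,3}^+(9/8)$ are indeed non-CM (so they are not already accounted for by~(2)), and that they really do satisfy the ghost equation~\eqref{eq:ghost-eqn} with $(q,r)=(5,3)$, putting them in Table~\ref{table:ghost-sols}.
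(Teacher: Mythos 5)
Your proposal follows essentially the same route as the paper: establish irreducibility (or fall into case (1)), apply modularity and level lowering to land in a finite list of Hilbert newform spaces over $\Q(\sqrt{5})$, run Mazur-style elimination at auxiliary primes, and recognize the surviving forms as either CM or ghost-solution forms. Two small imprecisions are worth flagging. First, the list of candidate level spaces you write down (e.g.\ $S_2(\Gamma_0(3\cdot\sqrt{5}))$) is off: the paper's Theorem~\ref{thm:spaces} shows that for a non-trivial primitive solution the conductor exponent at $3$ and at $(\sqrt{5})$ is always $2$ or $3$, never $1$ (potentially good reduction, special type with quadratic ramified character, or supercuspidal type all force exponent $\ge 2$), so the relevant spaces are exactly $S_2(\Gamma_0(3^i\cdot(\sqrt{5})^j))$ for $i,j\in\{2,3\}$, and those with exponent $1$ need not be searched. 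Second, the ghost-solution forms do not ``match $\rho_\id{p}^+$ at every $\Frob_\ell$''; rather, for each auxiliary prime $\ell$ the reduction of $-1/8$ (or $9/8$) is one of the residue classes $\tilde t_0$ being scanned, so that one value of the trace coincides with $a_\ell(\newform)$, which is enough to block the elimination but is a weaker statement. Also, whether $3$ is inert in $\Q(\sqrt{5})$ has nothing to do with $c$; it is a fixed fact about the field (here $3$ is inert), and the genus-2 conductor computation proceeds via cluster pictures and congruences to elliptic curves rather than Tate's algorithm on the hyperelliptic model directly. With these corrections, the strategy, the level-lowering step, the Mazur elimination producing $\bfP$, and the identification of CM and ghost survivors all agree with the paper's proof.
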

} The set $\bfP$ in Theorem \ref{thmA} arises from a standard elimination step
(based on ideas due to Mazur, as explained in \S\ref{section:Mazur},
using prime ideals of $\Q(\sqrt{5})$ of norm up to $400$). With the current standard tools of the modular method, we do not expect that this set can be reduced further (see Remark \ref{remar:Kraus}).

In theory the first case of Theorem \ref{thmA} can be discarded when $p$ is
large enough (as it is expected that only small primes can produce
reducible representations). Assuming some extra hypotheses on the
solution we can produce an explicit bound $C$
such that the residual representation $\bar{\rho}_{\id{p}}^+$ is
irreducible for all primes $p > C$ (see
Corollary~\ref{coro:large-image-5p3}). This allows to prove the
following corollary of Theorem \ref{thmA}.

{\color{MidnightBlue}
\begin{thmB}\label{thmB}
  \color{black} Let $\bfP:=\{2, 7, 11, 13, 19, 29, 31, 41, 61\}$. Then if $p$ is a prime
  number not in $\bfP$, there are no primitive solutions $(a,b,c)$ to
  \[
 x^5 + y^p + z^3 =0
 \]
 with $b$ odd and $3\mid b$ or $5 \mid b$.
\end{thmB}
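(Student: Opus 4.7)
The plan is to take a putative primitive solution $(a,b,c)$ with $b$ odd and $3\mid b$ or $5\mid b$, apply Theorem~\ref{thmA}, and rule out each of its three surviving cases under these divisibility hypotheses. Writing $\bfP'$ for the exceptional set of Theorem~\ref{thmA} and $\bfP$ for the smaller set defined in Theorem~\ref{thmB}, the seven primes in $\bfP'\setminus\bfP=\{3,5,71,79,89,101,109\}$ must be dispatched by hand; for every other $p\notin\bfP$ Theorem~\ref{thmA} applies and the three listed possibilities have to be eliminated. The divisibility hypotheses are precisely what one hopes for: the parity of $b$ constrains the local behavior of $\overline{\rho}^+_\id{p}$ above $2$ via Proposition~\ref{prop:cond-at-2}, while $3\mid b$ and $5\mid b$ force specific reduction types above $3$ and $5$ via Corollary~\ref{coro:conductor-at-r} and Proposition~\ref{prop:conductor-at-q}.

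To eliminate case (1) (reducibility of $\overline{\rho}^+_\id{p}$) I would feed this sharpened local information into Corollary~\ref{coro:large-image-5p3}; the resulting effective bound should lie below every prime outside $\bfP$ that is not otherwise treated. To eliminate case (2), congruence modulo $\id{p}$ to a CM Hilbert modular form over $\Q(\sqrt 5)$ with CM by $\Q(\sqrt{-3})$ or $\Q(\zeta_{15})/\Q(\sqrt 5)$, I would exploit that both CM fields are unramified at $2$ and ramify only above $3$ and $5$: under $b$ odd the inertial behavior of $\overline{\rho}^+_\id{p}$ at the prime above $2$ rules out the congruence, and under $3\mid b$ (resp.\ $5\mid b$) the conductor exponent at the prime above $3$ (resp.\ $5$) is incompatible with that of the CM form. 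A short case-by-case comparison of conductors, using the auxiliary results cited in Theorem~\ref{thm:conductor-general}, should cover every combination.

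To eliminate case (3), the ghost congruences at $t_0\in\{-1/8,9/8\}$, I would write down the hyperelliptic models $\bfC^+_{5,3}(t_0)$ explicitly and compare the associated representations to $\overline{\rho}^+_\id{p}$. The most efficient route is to pick an auxiliary prime $\lambda$ of $\Q(\sqrt 5)$ of good reduction for both motives at which the traces of Frobenius differ by an integer whose prime divisors all lie in $\bfP$, so that no large-$p$ congruence is possible; failing that, an inertial mismatch at $2$, $3$ or $5$ under the relevant divisibility hypothesis on $b$ would suffice. Finally, the primes $p=3,5$ are already covered by the classical results of \cite{Bruin,Poonen} cited in the introduction, and the remaining primes $p\in\{71,79,89,101,109\}$, which survived Mazur's method of \S\ref{section:Mazur} only because no divisibility information on $(a,c)$ was available in the setting of Theorem~\ref{thmA}, should be knocked out by rerunning that argument with the congruences on $a$ and $c$ inherited from the hypotheses on $b$.

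The main obstacle I foresee is case (3). The content of \S\ref{section:obstruction} is precisely that ghost motives genuinely share the ``outer'' conductor of a hypothetical real solution, so they cannot be separated from such a solution at the level of conductors alone; an honest Frobenius-trace or inertial-type computation on the two specific specializations $\bfC^+_{5,3}(-1/8)$ and $\bfC^+_{5,3}(9/8)$ will be necessary, and organising this computation so that the resulting forbidden primes all fall inside $\bfP$ is the delicate step of the argument.
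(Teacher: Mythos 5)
Your high-level skeleton is right (apply Theorem~A and dispatch each of its three alternatives using the divisibility hypotheses), and you correctly treat case~(1) via Corollary~\ref{coro:large-image-5p3} with $\ell=2$, $C(2)=13$. But you miss the single observation that makes cases (2) and (3) fall together: by Proposition~\ref{prop:Igusa} (and Remark~\ref{rem:pot-good-red}), when $3\mid b$ (resp.\ $5\mid b$) the Jacobian of $\bfC_{5,3}^+(t_0)$ has potentially \emph{multiplicative} reduction at $3$ (resp.\ $(\sqrt5)$), so the local type of $\rho_\id{p}^+$ there is \emph{special}. After level lowering, the congruent newform inherits a special local type at that prime. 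This simultaneously kills case~(2) — a CM form has potentially good reduction everywhere, so its local type at a bad prime is never special — and case~(3) — a special type with trivial Nebentypus forces conductor exponent $2$, i.e.\ $\varepsilon_3=2$ or $\varepsilon_5=2$, whereas the ghost forms live in $S_2(\Gamma_0(3^3\cdot(\sqrt5)^3))$. The set $\bfP$ then comes from eliminating only the three spaces with $\varepsilon_3=2$ or $\varepsilon_5=2$; the primes $\{71,79,89,101,109\}$ arose exclusively from the $3^3\cdot(\sqrt5)^3$ space and therefore never enter, and $3,5$ are removed by citing \cite{Bruin,Poonen}.

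Two points in your proposal would not survive scrutiny. For case~(2) you argue that under $b$ odd ``the inertial behavior at the prime above $2$ rules out the congruence,'' but $2$ is not a bad prime for $\mot^+$ at all; the parity of $b$ is used only for irreducibility. You then fall back on ``the conductor exponent at $3$ (resp.\ $5$) is incompatible with that of the CM form,'' but this is false as stated: by Corollary~\ref{coro:conductor-at-3} and Lemma~\ref{lemma:cond-at-t=0}, both the real-solution motive with $3\mid b$ and a CM specialization can have conductor exponent $2$ at $\id{r}\mid 3$. It is the local \emph{type} (special vs.\ supercuspidal/principal series) that separates them, not the conductor exponent. For case~(3) your proposed Frobenius-trace comparison between $\rho_\id{p}^+$ and the ghost specializations at $t_0\in\{-1/8,9/8\}$ is unnecessary and would likely not produce prime divisors confined to $\bfP$; the whole point of \S\ref{section:obstruction} is that at the level of traces the ghost forms are genuinely indistinguishable from hypothetical real solutions without extra local input. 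The extra input here is exactly the exponent drop $\varepsilon_3=2$ or $\varepsilon_5=2$, which you should be extracting from your hypothesis on $b$ rather than deferring to a ``delicate'' trace computation. Finally, you suggest ``rerunning Mazur's argument with congruences on $a$ and $c$'' to deal with $\{71,79,89,101,109\}$; in fact those primes simply never appear once the form is constrained to the smaller-level spaces, so no additional run is needed.
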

} The hypothesis $b$ being odd is used to prove that the residual
representation $\overline{\rho}_{\id{p}}^+$ is absolutely irreducible,
while the hypothesis $3 \mid b$ or $5 \mid b$ is used to rule out the
second case in Theorem~A, since it implies that the Hilbert modular
form attached to a solution has special local type at $3$ or at
$(\sqrt{5})$. Consequently, on the one hand the form does not have
complex multiplication, and on the other hand it does not lie in the
space of the two newforms appearing in the third case of Theorem
A.

\vspace{5pt}

It is expected that Frey varieties without complex multiplication have
residual Galois image as large as possible. More concretely, the
following is expected to hold (see \cite[Conjecture 4.1]{Darmon}).

\begin{conj}\label{conj:large-image}
  Let $K$ be a totally real field and $L$ a number field. There exists
  a constant $C(L,K)$, depending only on $L$ and $K$, such that for
  any abelian variety $A/L$ of $\mathrm{GL}_2$-type with
	\[
	\End_L(A)\otimes \Q = \End_{\overline{L}}(A)\otimes \Q \simeq K,
	\]
	and all primes $\mathfrak{p}$ in $K$ above rational primes $p$
        of norm greater than $C(L,K)$, the image of the mod $p$
        representation associated to $A$ contains
        $\mathrm{SL}_2(\F_{\mathfrak{p}})$, where $\F_{\mathfrak{p}}$
        denotes the residue field of $K$ at $\mathfrak{p}$.
\end{conj}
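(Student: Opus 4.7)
The plan is to imitate Serre's open image theorem for non-CM elliptic curves, extended to the $\GL_2$-type setting following Ribet and Momose. The hypothesis $\End_L(A)\otimes\Q=\End_{\bar L}(A)\otimes\Q\simeq K$ excludes all geometric obstructions to a large image, so it suffices to show that, outside a set of primes $\mathfrak p$ of norm at most some constant $C(L,K)$, the image $\bar\rho_{\mathfrak p}(G_L)$ cannot be contained in any proper maximal subgroup of $\GL_2(\F_{\mathfrak p})$.

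I would classify the image by its projectivization following Dickson: Borel, normalizer of a split Cartan, normalizer of a non-split Cartan, or an exceptional subgroup projectively isomorphic to $A_4$, $S_4$ or $A_5$. The exceptional case is the easiest to eliminate: such an image has bounded order independent of $\mathfrak p$, so for $N\mathfrak p$ large enough the Weil bound $|a_{\mathfrak q}|\le 2\sqrt{N\mathfrak q}$ at any fixed prime $\mathfrak q$ of good reduction, combined with $\trace\bar\rho_{\mathfrak p}(\Frob_{\mathfrak q})\equiv a_{\mathfrak q}\pmod{\mathfrak p}$, quickly contradicts the short list of traces available in the bounded image.

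For the Cartan cases the projective image is dihedral. Chebotarev, together with strict compatibility of the system of Galois representations attached to $A$ (via Faltings), then forces an unexpected quadratic character of $G_L$, or of $G_{L'}$ for an at-most-quadratic extension $L'/L$. Tracking this character as $\mathfrak p$ varies produces, for infinitely many $\mathfrak p$, an endomorphism of $A_{\bar L}$ not contained in $K$, contradicting $\End_{\bar L}(A)\otimes\Q\simeq K$.

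The hard case is the Borel one: $\bar\rho_{\mathfrak p}$ being reducible is equivalent to $A$ admitting an $L$-rational finite flat subgroup scheme of order $N\mathfrak p$. Bounding the primes for which this can occur amounts to an effective Mazur-type theorem for isogenies of $\GL_2$-type abelian varieties over the fixed number field $L$, and is only known unconditionally in restricted settings (Mazur over $\Q$; extensions by Momose, Kamienny, David, Bilu--Parent, Le Fourn). A proof in full generality would require either a Serre-type uniformity input, a study of $L$-rational points on the relevant Hilbert modular curves parametrizing $\mathfrak p$-isogenies, or an effective Chebotarev under $\mathrm{GRH}$; this is exactly why the statement is posed here as a conjecture rather than proved.
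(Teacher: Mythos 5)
You have correctly recognized that this statement is not a theorem of the paper but a \emph{conjecture} (attributed to Darmon, \cite[Conjecture 4.1]{Darmon}), and the paper gives no proof of it; it is used as a hypothesis in Theorem~C. Your outline of the standard attack --- Dickson's classification of the maximal subgroups of $\GL_2(\F_{\mathfrak p})$, elimination of exceptional images by Weil bounds at a fixed auxiliary prime, elimination of dihedral (Cartan-normalizer) images by producing an extra endomorphism via the associated quadratic character and compatibility of the $\lambda$-adic system, and the remaining Borel case --- is the correct framework, and you correctly locate the genuine obstruction: ruling out reducibility for all $\mathfrak p$ of large norm is an effective isogeny theorem for $\GL_2$-type abelian varieties over a fixed number field, which is open in this generality. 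No further proof is expected or possible here, and your write-up makes that clear; it is consistent with the paper's treatment of the statement as conjectural.
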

We can prove that any non-trivial solution corresponds to a
$\GL_2$-type abelian variety without complex multiplication. Assuming
Conjecture~\ref{conj:large-image} we get the following stronger result.
{\color{MidnightBlue}
\begin{thmC}\label{thmC}
  \color{black} Let $\bfP$ be as in Theorem \ref{thmA}. Assume that
  Conjecture~\ref{conj:large-image} holds for $L=K=\Q(\sqrt{5})$ with
  constant $C$. Then if $p$ is a prime number satisfying that
  $p \not \in \bfP$ and $p > C$, there
  are no non-trivial primitive solutions $(a,b,c)$ to
 \[
 x^5 + y^p + z^3 =0
 \]
 with $3\nmid c$.
\end{thmC}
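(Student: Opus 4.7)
The plan is to apply Theorem~\ref{thmA} to obtain the three-case dichotomy, and then to eliminate each case under the additional hypotheses $p > C$ and $3 \nmid c$, together with the large image conjecture.

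First, I would invoke the fact (stated in the paragraph preceding Theorem~\ref{thmC}) that the Frey abelian variety attached to a non-trivial primitive solution $(a,b,c)$ is of $\GL_2$-type over $L=\Q(\sqrt{5})$, with endomorphism algebra $K=\Q(\sqrt{5})$, and does \emph{not} have complex multiplication. Applying Conjecture~\ref{conj:large-image} with $L=K=\Q(\sqrt{5})$ and $p>C$ then forces the image of $\overline{\rho}_{\id{p}}^+$ to contain $\SL_2(\F_{\id{p}})$.

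This large image conclusion immediately rules out case~(1) of Theorem~\ref{thmA}, because a representation containing $\SL_2(\F_{\id{p}})$ is absolutely irreducible. It also rules out case~(2): the residual representation attached to a Hilbert modular form with complex multiplication has image contained in the normalizer of a Cartan subgroup of $\GL_2(\F_{\id{p}})$, and for $p \geq 5$ such a subgroup does not contain $\SL_2(\F_{\id{p}})$, contradicting the conclusion of the conjecture.

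Case~(3) is where the hypothesis $3 \nmid c$ has to be used. Here one must show that $\rho^+_{\id{p}}$ cannot be congruent modulo $\id{p}$ to $\rho_{\bfC^+_{5,3}(t_0),\id{p}}$ for either exceptional ghost value $t_0 \in \{-1/8,\, 9/8\}$. My approach would be to compare conductor exponents at the prime above~$3$: by Corollary~\ref{coro:conductor-at-r} the conductor exponent at~$3$ of $\rho^+_{\id{p}}$ is determined by the $3$-adic valuations of $a$ and $c$, and when $3 \nmid c$ this exponent differs from the exponent~$3$ appearing in the Hilbert modular space $S_2(\Gamma_0(3^3 \cdot (\sqrt{5})^3))$ containing the two exceptional ghost forms. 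Since $p > C$ is in particular coprime to~$3$, a mod-$\id{p}$ congruence between two absolutely irreducible Galois representations forces equality of their prime-to-$p$ Artin conductors (by standard Ribet-style level lowering); this gives the desired contradiction, eliminating case~(3).

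The hard part will be case~(3). Cases~(1) and~(2) are essentially formal consequences of the large image conjecture, but case~(3) genuinely requires the local analysis of the Frey representation at~$3$ afforded by Corollary~\ref{coro:conductor-at-r}, together with a careful comparison of inertial types at~$3$ with the two specific ghost specializations. The technical heart of the argument is verifying that the hypothesis $3 \nmid c$ always produces a conductor exponent at~$3$ different from those of both $t_0 = -1/8$ and $t_0 = 9/8$, so that no residual congruence can bridge the two.
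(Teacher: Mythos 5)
Your handling of cases (1) and (2) matches the paper's strategy closely (modulo some hypothesis-checking: the paper carefully addresses the scenario where the Jacobian might split as a product of elliptic curves before invoking Conjecture~\ref{conj:large-image}, which requires $\End_L(A)\otimes\Q = \End_{\overline L}(A)\otimes\Q\simeq K$; you skip this, but it can be repaired). The real problem is your treatment of case~(3), which contains a genuine gap.

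You claim that when $3\nmid c$, the conductor exponent at~$3$ of $\rho^+_{\id{p}}$ differs from the exponent~$3$ appearing in $S_2(\Gamma_0(3^3\cdot(\sqrt5)^3))$, so a congruence with the ghost forms is impossible. This is false. Looking at Theorem~\ref{thm:spaces}(i) (or Table~\ref{table:cond3}), when $3\nmid abc$ the conductor exponent at~$3$ is $2$ only if $4c^3\equiv a^5\pmod 9$, and otherwise it equals~$3$. So a solution with $3\nmid c$ can perfectly well land in the space $S_2(\Gamma_0(3^3\cdot(\sqrt5)^3))$, and comparing conductors alone cannot rule out case~(3). The paper's actual argument is the substantially finer Theorem~\ref{thm:local-type-3}: it compares the full inertial type at~$3$, showing that the degree-$12$ Galois extension of $\Q_3$ cut out by the local representation for $t_0\in\{-1/8,\,9/8\}$ (given by the explicit polynomial $P=x^{12}+3x^{11}+\dots+9$) is not isomorphic to the extension (given by $Q=x^{12}+6x^{11}+\dots+36$) that arises for any $t_0\equiv -1$ or $2\pmod 9$, which are exactly the residues allowed when $3\nmid c$ and the exponent is~$3$. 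Since inertial type is preserved under mod-$\id{p}$ congruence (Lemma~\ref{lemma:same-type-3}), this gives the contradiction. You would need to replace your conductor comparison with this inertial-type comparison (or something of comparable strength) to complete case~(3).
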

}
The condition $3 \nmid c$ is used to eliminate the two forms coming
from ghost solutions. We can prove (see
Theorem~\ref{thm:local-type-3}) that when $3 \nmid c$ the local type
of a solution does not match the one of a ghost solution. Then Theorem \ref{thmB} follows from Theorem \ref{thmA}.

To the best of our knowledge, Theorems~\ref{thmB} and \ref{thmC} represent the first
Diophantine result for an infinite family of signatures involving
three distinct prime exponents. It also constitutes the first
successful application of the hypergeometric-motives approach within
the modular method.
\subsection*{Notation} Throughout this paper we use the following notation. Let $F$ denote a number field. 
\begin{itemize}
  
\item We denote by $\Gal_F$ the absolute
  Galois group $\Gal(\overline{F}/F)$.
\item If $c\in F$, we denote by $\theta_c$  the character of $\Gal_F$ corresponding by class field theory
  to the extension $F(\sqrt{c})/F$.
\item If $\id{n}$ is a prime ideal of $F$, we denote by $\normid{n}$ its norm.
\item If $\newform$ is a newform, we denote by $K_\newform$ its coefficient field.
  
\end{itemize}

\subsection*{Electronic resources.} The \texttt{Magma} \cite{Magma} and \texttt{PARI/GP} \cite{PARI2} programs used for the computations in this article are posted in \cite{github}, which also includes a list of the programs, their descriptions, output transcripts, timings, and the machines used.

All calculations were performed using the servers of the Math Computation cluster shanks-birch @ CMat (Montevideo, Uruguay), funded by Pedeciba, CSIC I+D and ANII $FCE\_1\_2017$ $\_1\_136609$.

\subsection*{Acknowledgments.}
The present article has benefited from many fruitful discussions with colleagues. We are grateful to Fernando Rodriguez Villegas and Franco Golfieri Madriaga for valuable conversations on hypergeometric motives and their Diophantine applications. We also thank Imin Chen for insightful suggestions related to the subject of this work, and Tim Dokchitser for sharing his code to compute cluster pictures. Our thanks go as well to Nuno Freitas for discussions concerning the computation of an equation for the local type at 3 of our hyperelliptic curve. Finally, we are indebted to John Jones for explaining the use of his implementation of Panayi's algorithm for computing reductions of local extensions. 

\section{Hypergeometric motives}
\label{section:prel}
The foundations of the theory of hypergeometric motives is due to Katz
(\cite{katz,MR1366651}); we recommend the reader to look at the nice
survey \cite{MR4442789} for a friendly introduction to rational
hypergeometric motives. For Diophantine applications, we are mostly
interested in rank $2$ motives which are defined over totally real
number fields, as studied in \cite{GPV}. The goal of the present
section is not to give a general review of the theory, but to present
a concise account of its main properties (whose proofs are given in \cite{GPV}).

Let us consider the generalized Fermat equation
\begin{equation}\tag{\ref{eq:GFE}}
	x^q+y^p+z^r=0,
\end{equation}
where $q\neq r$ and $\min\{q,r,p\}>2$. Without loss of generality, we
assume that $q>r$. Darmon's innovative idea (developed in
\cite{Darmon}) is to attach a varying family of residual
representations, called \emph{Frey representations}, to the signature $(q,p,r)$, such that each particular solution 
$(a,b,c)$ to (\ref{eq:GFE}) gives rise to a specialization of this family.  In \cite{GP} the authors applied a
similar strategy, replacing Darmon's Frey representations with
hypergeometric motives. More concretely, to the signature $(q,p,r)$ they
attach the two hypergeometric motives
\begin{align*}
  \HGM ^+(t):=\HGM\left(\left(\frac{1}{r},-\frac{1}{r}\right),\left(\frac{1}{q},-\frac{1}{q}\right)\Big | t\right),  \quad \HGM^-(t):=\HGM\left(\left(\frac{1}{2r},-\frac{1}{2r}\right),\left(\frac{1}{q},-\frac{1}{q}\right)\Big | t\right),
\end{align*}
for $t$ the varying parameter.
\subsection{Geometric properties}
Rank $2$ hypergeometric motives are a motivic incarnation of Gauss
hypergeometric function $_{2}F_1(a,b;c | t)$. The function satisfies an
ordinary differential equation, so has attached a \emph{monodromy
  representation}, i.e.\
$\rho:\pi_1(\PP^1\setminus\{0,1,\infty\},x_0) \to \GL_2(\CC)$. If
$\gamma_i$ denotes a loop in counterclockwise direction around the
point $i$, for $i=0,1,\infty$, then the monodromy representation for
$\HGM^+(t)$ and $\HGM^-(t)$ at $\gamma_i$ has matrices conjugate to
\begin{equation}
  \label{eq:monodromy+}
M_0^+:=
\begin{pmatrix}
  \zeta_q^s & 0\\
  0 & \zeta_q^{-s}
\end{pmatrix},
\qquad
M_1^+:=
\begin{pmatrix}
  1 & 1\\
  0 & 1
\end{pmatrix},
\qquad
M_\infty^+:=
\begin{pmatrix}
  \zeta_r & 0\\
  0 & \zeta_r^{-1}
\end{pmatrix},
\end{equation}
and
\begin{equation}
  \label{eq:monodromy-}
M_0^-:=
\begin{pmatrix}
  -\zeta_q^s & 0\\
  0 & -\zeta_q^{-s}
\end{pmatrix},
\qquad
M_1^-:=
\begin{pmatrix}
  1 & 1\\
  0 & 1
\end{pmatrix},
\qquad
M_\infty^-:=
\begin{pmatrix}
  \zeta_r & 0\\
  0 & \zeta_r^{-1}
\end{pmatrix},
\end{equation}
respectively, where $\zeta_r:=\exp(\frac{2 \pi i}{r})$. A result of
Levelt proves that one can chose a basis so that the representation in
fact has image in $\GL_2(\Q(\zeta_{qr}))$.

For every  prime ideal $\id{p}$ of $K$, the monodromy representation extends to
a continuous representation
\begin{equation}
  \label{eq:geom-rep}
\rho_{\id{p}}^{\pm}:\Gal_{\overline{\Q}(t)} \to \GL_2(K_{\id{p}}),
\end{equation}
with the property that the image of the inertia group around the point
$0, 1, \infty$ is generated (up to conjugation) by the matrix
$M_0^\pm, M_1^\pm$ and $M_{\infty}^\pm$ respectively.

For $N$ a positive integer, let $\Q(\zeta_N)^+$ denote the totally
real subfield of the cyclotomic extension $\Q(\zeta_N$).  The motives
$\HGM^{\pm}(t)$ are defined over the totally real number field
$K:=\Q(\zeta_q)^+\cdot \Q(\zeta_r)^+$ (the composition of both
fields). Then they have attached Galois representations
\[
\rho_{\id{p}}^{\pm}: \Gal_{K(t)} \to \GL_2(K_{\id{p}})
\]
that extend the representations (\ref{eq:geom-rep}) (a
property that characterizes the motive up to a twist).

\subsection{Construction of the motive}
Let us recall an explicit construction of the motives that will be
useful for later purposes. Set
\[
  A^+:=q-r, \quad B^+:=-q-r, \quad C^+:=q+r, \quad D^+:=-r.
\]
Consider Euler's curve given by the equation
\begin{equation}
  \label{eq:Euler+}
\bfC^+: y^{qr} = x^{A^+}(1-x)^{B^+} (1-tx)^{C^+}t^{D^+}.    
\end{equation}
The curve $\bfC^+$ has an action of the group $\mubb_{qr}$ of $qr$-th
roots of unity (via $\zeta_{qr}\cdot(x,y)=(x,\zeta_{qr}y)$). Over the
cyclotomic field $F=\Q(\zeta_{qr})$, the motive $\HGM^+(t)$
corresponds to the $\zeta_{qr}$-eigenspace for this action on the new
part of $\Jac(\bfC^+)$.

To define the motive over $K$ we need to consider quotients of
$\bfC^+$.  The curve possesses three involutions denoted by
$\iota_{-1}$, $\iota_j$ and $\iota_i$,  defined as follows:
\begin{equation}
  \label{eq:inv-1}
  \iota_{-1}(x,y)=\left(\frac{1}{tx},\frac{1}{y}\right).
\end{equation}
Let $j$ be an integer
satisfying that $j \equiv 1 \pmod r$ and $j \equiv -1 \pmod q$. Let
$\mu, \nu$ be positive integers so that
\[
\frac{j}{r} = \frac{1}{r} + \mu,\qquad \frac{j}{q} = -\frac{1}{q} + \nu.
\]
Then by \cite[Lemma 8.10]{GPV} the map
\begin{equation}
  \label{eq:inv1}
  \iota_j(x,y)=\left(\frac{x-1}{tx-1},\frac{x^{\mu-\nu}(1-x)^{-\mu-\nu}(1-xt)^{\mu+\nu}t^{-\nu}}{y^j}\right)  
\end{equation}
is an involution. There is a similar definition for their composition:
let $i$ be an integer satisfying $i \equiv 1 \pmod r$ and $i \equiv -1 \pmod q$. Let $\mu:= \frac{i-1}{r}$,  $\nu:= \frac{i+1}{q}$. 
Then by \cite[Lemma 8.11]{GPV},
the map
\begin{equation}
  \label{eq:inv-2}
  \iota_i(x,y)=\left(\frac{tx-1}{t(x-1)},\frac{x^{\mu-\nu}(1-x)^{-\mu-\nu}(1-tx)^{\mu + \nu}t^{-\nu}}{y^i}\right)
\end{equation}
is an involution corresponding to their composition. Let $\bfD^+$ be
the quotient of $\bfC^+$ by $\langle \iota_{-1},\iota_j\rangle$. Then
the \textit{new} part of $\Jac(\bfD^+)$ has an action of $\Om_K$, the
ring of integers of $K$, and it is of $\GL_2$-type over $K$ (see
\cite[Proposition 8.12]{GPV}). A Tate twist of its new part gives a
realization of the motive $\HGM^+(t)$ over $K$.

A similar construction holds for the motive $\HGM^-(t)$. Set
\[
  A^-:=q-2r, \quad B^-:=-q-2r, \quad C^-:=q+2r, \quad D^-:=-2r.
\]
and define Euler's curve
\begin{equation}
  \label{eq:Euler-}
\bfC^-: y^{2pq} = x^{A^-}(1-x)^{B^-} (1-tx)^{C^-}t^{D^-}.    
\end{equation}
The same quotient as before produces a realization of the motive
$\HGM^-(t)$.

\subsection{Specializations}
Let $(a,b,c)$ be a putative primitive solution to (\ref{eq:GFE}), and
set $t_0=-\frac{a^q}{c^r}$. We will use the following notation:
\begin{equation}
  \label{eq:specialzation}
\mot^+:=\HGM^+(t_0),\qquad \mot^-:=\HGM^-(t_0)\otimes \theta_c,
\end{equation}
where $\theta_c$ is the quadratic character of $\Gal_\Q$ corresponding to
the extension $\Q(\sqrt{c})/\Q$ (a similar notation will be used for
its restriction to $\Gal_K$). The specialized motive $\mot^\pm$
satisfies that its coefficient field (i.e.\ the field generated by
Frobenius traces) is contained in $K$, the field of definition of the motive \cite[Theorem 8.2]{GPV}.
Then for each prime
ideal $\id{p}$ of $K$ there exists a Galois representation
\begin{equation}
  \label{eq:gal-rep}
  \rho_{\id{p}}^\pm:\Gal_K \to \GL_2(K_{\id{p}})  
\end{equation}
attached to the motive $\mot^\pm$. The following are some properties of $\mot^+$.
\begin{thm}
  \label{thm:mot-+-prop}
  Let $\idK$ be a prime ideal of $K$. Then
  \begin{enumerate}
  \item The motive $\mot^+$ is unramified at $\idK$ if $\idK \nmid qrb$.
    
  \item It has multiplicative reduction at primes
    $\idK \nmid qr$ dividing $b$.
  \item The residual representation of $\rho_{\id{p}}^+$ is unramified
    at all primes not dividing $pqr$.
    
  \item The residual representation of $\rho_{\id{p}}^+$ is finite at
    all primes dividing $p$.
    
  \item If either $q \mid a$ or $r \mid c$ then $\mot^+$ is modular.
  \end{enumerate}
\end{thm}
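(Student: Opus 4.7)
The strategy is to pull back the monodromy information from \eqref{eq:monodromy+} through the specialization map $t \mapsto t_0 = -a^q/c^r$. Since $b^p = -a^q - c^r$, for any prime $\idK$ of $K$ with $\idK \nmid qr$ we have
\[
v_\idK(t_0) = q\,v_\idK(a) - r\,v_\idK(c), \qquad v_\idK(t_0-1) = p\,v_\idK(b) - r\,v_\idK(c),
\]
and the primitivity of $(a,b,c)$ together with the Fermat relation forces at most one of $v_\idK(a), v_\idK(b), v_\idK(c)$ to be positive, which gives a clean case analysis.

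For (1) and (2), my plan is to identify which of the monodromy matrices $M_0^+, M_1^+, M_\infty^+$ governs the tame inertia at $\idK$, by looking at where $t_0$ reduces modulo $\idK$. If $\idK$ divides none of $abc$, then $t_0$ avoids $\{0,1,\infty\}$ modulo $\idK$ and good reduction is immediate. If $\idK \mid a$, then $t_0 \equiv 0 \pmod{\idK}$ with $v_\idK(t_0) = q\,v_\idK(a)$; since $M_0^+$ has order $q$, the tame inertia acts by $(M_0^+)^{q\,v_\idK(a)} = \mathrm{Id}$, so we still obtain good reduction. The analogous calculation for $\idK \mid c$ uses that $M_\infty^+$ has order $r$ and $|v_\idK(t_0)| = r\,v_\idK(c)$. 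Finally, if $\idK \mid b$, then $t_0 \equiv 1 \pmod{\idK}$ and the unipotent matrix $M_1^+$ forces the inertia action to be unipotent, giving purely multiplicative reduction and establishing (2).

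Parts (3) and (4) then follow by combining this analysis with the classical Hellegouarch--Frey observation on Tate-parameter divisibility. For (3), the only case not already covered by good reduction is $\idK \mid b$ with $\idK \nmid pqr$; here the Tate parameter of the $\idK$-adic uniformization has valuation $v_\idK(t_0 - 1) = p\,v_\idK(b)$, divisible by $p$, so $\overline{\rho}_{\id{p}}^+$ is unramified at $\idK$. For (4), since $p \notin \{q,r\}$ any prime $\id{p}'$ of $K$ above $p$ satisfies $\id{p}' \nmid qr$; if $\id{p}' \nmid b$ good reduction gives finite flat automatically, and if $\id{p}' \mid b$ the multiplicative reduction has Tate valuation $p\,v_{\id{p}'}(b)$, again divisible by $p$, which ensures that the $p$-torsion extends to a finite flat group scheme over $\Om_{K_{\id{p}'}}$ by the standard Tate-curve argument.

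For (5), the plan is to use the explicit realization of $\mot^+$ as (a Tate twist of) the $K$-rational factor of the new part of $\Jac(\bfD^+)$ recalled in the preceding subsection, which gives an abelian variety $A/K$ of $\GL_2$-type. Under the divisibility $q \mid a$ (resp.\ $r \mid c$), the valuation of $t_0$ (resp.\ of $1/t_0$) at every prime of $K$ above $q$ (resp.\ $r$) is a multiple of $q$ (resp.\ $r$), so the monodromy argument above extends and the motive has good reduction at these primes as well. This severe restriction on the conductor of $A$ is what allows one to conclude modularity, either by directly matching $A$ to a Hilbert modular form in a controlled space or by identifying the limiting pure motive at the relevant cusp with a known CM motive from which $A$ inherits extra endomorphisms. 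This last step is the main obstacle of the proof, since it is the only piece that does not reduce to the formal monodromy analysis of (1)--(4), and mirrors the approach carried out in \cite[Theorem 6.3]{GP} for the signature $(5,p,3)$.
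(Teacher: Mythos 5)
The paper proves Theorem \ref{thm:mot-+-prop} simply by citing \cite{GP}, so you are supplying an argument the paper does not spell out. For parts (1)--(4) your monodromy calculation is the right idea and is essentially sound: at a prime $\idK\nmid qr$, the tame inertia is governed by whichever of $M_0^+, M_1^+, M_\infty^+$ is relevant, raised to the power $|v_\idK(t_0)|$ or $v_\idK(t_0-1)$, and the orders $q$, $\infty$, $r$ of these matrices together with the valuations $q\,v_\idK(a)$, $p\,v_\idK(b)$, $r\,v_\idK(c)$ forced by the Fermat relation and primitivity yield (1)--(4). Two points deserve more care: the identification of arithmetic local inertia with a power of the geometric monodromy is a substantive theorem about specializations of hypergeometric local systems, not something automatic; and the clean equality between the Tate-parameter valuation and $v_\idK(t_0-1)$ is model-dependent --- the robust statement is only that $v_\idK(q_A)$ is a fixed integer multiple of $v_\idK(t_0-1)$, which is all you actually need for the $p$-divisibility.

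Part (5) contains a genuine gap, at two levels. First, your claim that under $q\mid a$ (resp.\ $r\mid c$) the motive has good reduction at primes above $q$ (resp.\ $r$) is false: those are precisely the potentially wild primes, the monodromy matrices only control the tame quotient of inertia, and the paper's own Proposition~\ref{prop:conductor-at-q} (with primitivity forcing $q\nmid bc$ when $q\mid a$) shows the conductor exponent at $\id{q}$ is then $2$ or $3$ --- additive reduction, not good. Second, and more fundamentally, ``a severe restriction on the conductor allows one to conclude modularity'' is not an argument; there is no theorem that deduces modularity from conductor bounds. The proof in \cite{GP} of the modularity claim is a residual-modularity-plus-lifting argument: one reduces $\rho^+_\id{p}$ modulo a prime above $q$ or $r$, uses the congruence of Proposition~\ref{prop:congruences} to identify the residual representation with one coming from a Frey hyperelliptic curve (via Proposition~\ref{prop:darmon-hyper-relation}) whose modularity is already known, verifies absolute irreducibility of that residual representation --- this is where the hypotheses $q\mid a$ or $r\mid c$ actually do their work --- and then applies a modularity lifting theorem. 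None of these ingredients appears in your sketch, and the conductor heuristic cannot substitute for them.
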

\begin{proof}
  See \cite{GP}, Theorems 4.1, 4.2, 4.3 and 5.5 respectively.
\end{proof}
\begin{remark}
  \label{rem:Tate-twist} As explained in \cite[Remark 6]{GP}, the
  modularity statement is true up to a Tate twist. For the parameters
  studied in the present article, $\mot^{\pm}(1)$ has Hodge-Tate
  weights matching a parallel weight $2$ Hilbert modular form.
\end{remark}
The motive $\mot^-$ satisfies analogous properties.
\begin{thm}
  \label{thm:mot---prop}
  Let $\idK$ be a prime ideal of $K$. Then
  \begin{enumerate}
  \item The motive $\mot^-$ is unramified at $\idK$ if $\idK \nmid 2qrb$.
  \item It has multiplicative reduction at primes $\idK \nmid 2qr$
    dividing $b$.
    
  \item The residual representation of $\rho_{\id{p}}^-$ is unramified
    at all primes not dividing $2pqr$.
    
  \item The residual representation of $\rho_{\id{p}}^-$ is finite at
    all primes dividing $p$.
    
  \item If either $r>5$ and $q \mid a$ or $q>5$ and $r \mid c$ then
    $\mot^-$ is modular.
  \end{enumerate}
\end{thm}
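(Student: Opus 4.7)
The plan is to mirror the proof of Theorem~\ref{thm:mot-+-prop} item by item, accounting for the two features that distinguish $\mot^-$ from $\mot^+$: the monodromy at $0$ has order $2q$ rather than $q$ (compare \eqref{eq:monodromy-} with \eqref{eq:monodromy+}), and the extra quadratic twist by $\theta_c$ built into the definition \eqref{eq:specialzation}. Each assertion then reduces to an analogue of one of Theorems~4.1, 4.2, 4.3, 5.5 of \cite{GP} applied to the Euler curve $\bfC^-$ of \eqref{eq:Euler-} and its quotient $\bfD^-$.

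For (1) and (2), I would run the same local analysis on $\bfC^-$ that was used in \cite{GPV} for $\bfC^+$. Fix a prime $\idK$ of $K$ with $\idK\nmid 2qr$. Primitivity of $(a,b,c)$ together with $a^q+b^p+c^r=0$ forces at most one of $v_\idK(a)$, $v_\idK(b)$, $v_\idK(c)$ to be positive, and these three cases correspond exactly to $t_0=-a^q/c^r$ specializing near $0$, $1$, $\infty$ respectively. At a prime with $v_\idK(b)>0$ the monodromy at $t=1$ is unipotent and the twist $\theta_c$ is unramified (since $\gcd(b,c)=1$ and $\idK\nmid 2$), giving multiplicative reduction and hence (2). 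At a prime with $v_\idK(a)>0$, the monodromy at $t=0$ is tame of order dividing $2q$, hence trivial at $\idK\nmid 2q$, and $\theta_c$ is again unramified, yielding good reduction. At a prime with $v_\idK(c)>0$, the hypergeometric parameter $\tfrac{1}{2r}$ produces, via the specialization $t_0=-a^q/c^r$ with $v_\idK(c^r)>0$, a quadratic ramification at $\idK$ which is precisely cancelled by $\theta_c$; this is the raison d'être of the twist, and after it the residual action of inertia becomes tame of order $r$ and hence trivial for $\idK\nmid r$. Combining the three cases gives (1).

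Parts (3) and (4) follow from (1) and (2) verbatim as in the $+$ setting: the residual ramification set inherits the bad primes of $\mot^-$ and the varying prime $p$, while at $\idK\mid p$ the reduction is either good (giving a crystalline, hence finite flat, residual representation) or multiplicative (hence finite after a Tate twist), so $\bar{\rho}_{\id{p}}^-$ is finite at $p$. For (5), I would invoke \cite[Theorem~6.3]{GP}, whose proof uses the $\GL_2$-type factor of $\Jac(\bfD^-)$ together with modularity lifting theorems over totally real fields. The hypotheses $r>5$, $q\mid a$ (or $q>5$, $r\mid c$) are the thresholds under which that factor has the necessary local types and residual image to be accessible by the known lifting theorems; modularity of $\mot^-=\HGM^-(t_0)\otimes\theta_c$ then follows from that of $\HGM^-(t_0)$, since twisting a Hilbert newform by a quadratic character preserves automorphy.

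The main obstacle is the cancellation of the $\sqrt{c}$-ramification by $\theta_c$ at primes dividing $c$, needed for (1). This requires unwinding the local contribution of $\tfrac{1}{2r}$ in the specialization of $\bfC^-$ near $t=\infty$ and verifying that it agrees with the local character $\theta_c$ at each such prime; the remaining steps are then parallel to the $+$ case.
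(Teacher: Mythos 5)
Your proposal matches the paper's approach: the paper's proof of this theorem consists of a single line citing \cite{GP}, Theorems 4.1, 4.2, 4.3, and 5.6, and your sketch correctly reconstructs the content behind those citations (mirror the $\mot^+$ arguments on the Euler curve $\bfC^-$, track the extra $2$ in the monodromy at $0$, and use the quadratic twist by $\theta_c$ to cancel the ramification coming from the half-integral parameter $\tfrac{1}{2r}$ near $t=\infty$). The one error is in part (5): you cite \cite[Theorem~6.3]{GP}, but that result is the unconditional modularity statement for $r=3$ (used in the paper's Theorem~\ref{thm:modularity}), which requires $q\ge 11$ for $\mot^-$ and has nothing to do with the hypotheses ``$r>5$ and $q\mid a$'' or ``$q>5$ and $r\mid c$.'' The correct reference is \cite[Theorem~5.6]{GP}, the conditional modularity result predicated on a divisibility condition on the solution, which is what produces exactly the hypotheses in item (5); your prose description of why the divisibility hypotheses matter is sound, so this is a citation error rather than a conceptual one.
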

\begin{proof}
  See \cite{GP}, Theorems 4.1, 4.2, 4.3 and 5.6 respectively.
\end{proof}
When $r=3$ we can improve the previous modularity result.
\begin{thm}
  \label{thm:modularity}
  Let $r=3$. Then
  \begin{enumerate}
  \item If $q\ge5$, the motive $\mot^+$ is modular for all
    specialization of the parameter $t$.
  \item If $q\ge 11$, the motive $\mot^-$ is modular for all
    specialization of the parameter $t$.
  \end{enumerate}
\end{thm}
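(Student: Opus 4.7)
The plan is to reduce to the case not already covered by Theorems~\ref{thm:mot-+-prop} and \ref{thm:mot---prop}, and then push through modularity by analyzing the residual representation at a prime above $3$ (where $r=3$).

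\smallskip

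\textbf{Step 1: Dispatch the easy cases.} By Theorem~\ref{thm:mot-+-prop}(5), $\mot^+$ is modular whenever $q\mid a$ or $r\mid c$; with $r=3$ the second condition becomes $3\mid c$, so it remains to treat $\mot^+$ under the hypothesis $q\nmid a$ and $3\nmid c$. For $\mot^-$, Theorem~\ref{thm:mot---prop}(5) with $r=3$ gives modularity only when $q>5$ and $3\mid c$ (the clause ``$r>5$'' is vacuous here); combined with the hypothesis $q\ge 11$, this again reduces us to $q\nmid a$ and $3\nmid c$. In particular, modularity must be proved by a genuinely new argument exactly in the ``generic'' branch, and the approach is independent of the specific value of $t_0$.

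\smallskip

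\textbf{Step 2: Mod-$3$ analysis of the residual representation.} Fix a prime $\id{l}$ of $K$ above $3$. The key local input is the monodromy around $t=\infty$: the matrix $M_\infty^{\pm}$ in \eqref{eq:monodromy+}--\eqref{eq:monodromy-} has eigenvalues $\zeta_r^{\pm 1}=\zeta_3^{\pm 1}$, so mod $\id{l}$ it is unipotent (since $\zeta_3\equiv 1\pmod{\id{l}}$). Together with $M_1^{\pm}$ being unipotent unconditionally, the projective residual monodromy is essentially controlled by $M_0^{\pm}$, whose order is coprime to $3$. Concretely, we plan to show that the projective image of $\bar{\rho}^{\pm}_{\id{l}}$ lies in the normalizer of a (split or non-split) Cartan subgroup, so it is either reducible or induced from a character of a quadratic extension of $K$.

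\smallskip

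\textbf{Step 3: Automorphy lifting.} Once the projective image is recognized as dihedral or reducible, $\bar{\rho}^{\pm}_{\id{l}}$ is automorphic: in the dihedral case by class field theory together with automorphic induction (a special case of Langlands--Tunnell, using that $K$ is totally real), and in the reducible case via Skinner--Wiles. Then, since $\mot^{\pm}(1)$ has parallel weight-$2$ Hodge--Tate weights (Remark~\ref{rem:Tate-twist}) and satisfies reasonable local conditions at primes dividing the level (finite at $p$, with controlled behavior at $qr$ and at $2$ for $\mot^-$), one applies a standard Hilbert modular modularity lifting theorem (\'a la Skinner--Wiles in the residually reducible case, or Barnet-Lamb--Gee--Geraghty--Taylor in the residually irreducible/dihedral case) to conclude that $\rho^{\pm}_{\id{l}}$, and therefore the motive $\mot^{\pm}$, is modular.

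\smallskip

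\textbf{Main obstacle.} The heart of the argument is verifying the hypotheses of the chosen modularity lifting theorem in a form that is uniform in $t_0$ and in the solution $(a,b,c)$. Two delicate points stand out. First, one must control the shape of the residual image precisely enough to distinguish the dihedral and reducible sub-cases, and then check the corresponding ``big image'' or ordinarity conditions. Second, the local deformation conditions at primes above $2$ (for $\mot^-$) and at primes above $q$ must match the automorphic side; the asymmetry $q\ge 5$ versus $q\ge 11$ in the statement almost certainly reflects precisely the fact that $\mot^-$ carries genuine ramification at $2$, which rules out the lifting theorems applicable to $\mot^+$ unless $q$ is large enough to give enough ``freedom'' in the Taylor--Wiles patching argument (analogously to the threshold $q>5$ appearing already in Theorem~\ref{thm:mot---prop}(5)).
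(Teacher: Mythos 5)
The paper's proof of Theorem~\ref{thm:modularity} is a direct citation to \cite[Theorem~6.3]{GP}, so the argument lives elsewhere; nevertheless, the remark immediately following the statement reveals exactly what that argument does, and it differs from your proposal in the most important step. The strategy (due to Darmon and implemented in \cite{GP}) reduces the motive modulo a prime \emph{above $q$}, not above $r=3$. Because $\zeta_q\equiv 1$ modulo any prime over $q$, the parameter $\tfrac{1}{q}$ (resp.\ $-\tfrac{1}{q}$) degenerates to $1$, so $\HGM^{\pm}(t)$ becomes congruent to the \emph{rational} hypergeometric motive $\HGM\bigl((\tfrac{1}{r},-\tfrac{1}{r}),(1,1)\,|\,t\bigr)$ (resp.\ the $-$ variant). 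For $r=3$ this rational motive is realized by an elliptic curve over $\Q$, which is modular unconditionally. That is where residual automorphy comes from; one then checks irreducibility of the residual representation and applies a modularity lifting theorem. Your proposal never invokes this congruence, and without it you have no source of residual automorphy.

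Your Step~2 is also incorrect on its own terms. Modulo a prime above $3$, the matrix $M_1^{\pm}$ is unipotent and $M_\infty^{\pm}$ becomes unipotent, while $M_0^{\pm}$ has order $q$. A group containing a nontrivial unipotent element cannot sit inside the normalizer of a (split or nonsplit) Cartan subgroup: Cartan normalizers in $\GL_2$ of a finite field have no unipotents. So for generic $t_0$ the residual image at $\id{l}\mid 3$ is large, not dihedral or reducible, and Step~3 (automorphic induction / Skinner--Wiles for residually reducible lifts) never gets off the ground. Indeed, the whole point of Darmon's ``modularity from below'' is that one has no a priori control of the image mod~$r$, which is why the argument descends through $q$ instead.

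Finally, your heuristic for the threshold $q\ge 11$ (ramification at $2$ and Taylor--Wiles patching) does not match the paper. The remark after the theorem explains that the obstruction for $\mot^-$ with small $q$ is that the residual representation \emph{modulo $q$} of the auxiliary rational elliptic curve can be reducible (e.g.\ $E_{27/16}$ and $E_{-11/16}$ have rational $5$-subgroups when $q=5$), so the lifting theorem fails. The bound on $q$ is a condition ensuring irreducibility of the mod-$q$ reduction of the rational motive, not a condition coming from local deformation rings at $2$.
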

\begin{proof}
  See \cite[Theorem 6.3]{GP}.
\end{proof}
\begin{remark}
  Unfortunately the strategy used in \cite{GP} (suggested in
  \cite{Darmon}) does not allow to prove modularity of all specializations of the motive
  $\mot^-$ when $q=5$. The reason is that for some specializations of
  the parameter, the representation modulo $5$ is reducible. Let us
  explain the situation with more detail.  By \cite[Theorem
  10.3]{GPV}, the motive
  $\HGM\left(\left(\frac{1}{6},-\frac{1}{6}\right),\left(\frac{1}{5},-\frac{1}{5}\right)|t\right)$
  is congruent modulo $(\sqrt{5})$ to the rational motive
  $\HGM\left(\left(\frac{1}{6},-\frac{1}{6}\right),\left(1,1\right)|t\right)$.
  The latter corresponds to the rational elliptic curve (see \cite[\S
  2.1.4 ]{GP})
  \[
    E_t: y^2+xy=x^3-\frac{t}{432}.
  \]
  The curve is modular for all rational specializations $t_0$ of the
  parameter, so if its residual representation modulo $5$ is
  irreducible, the motive
  $\HGM\left(\left(\frac{1}{6},-\frac{1}{6}\right),\left(\frac{s}{5},-\frac{s}{5}\right)|t_0\right)$
  is modular as well. The problem is that the residual representation
  of $E_{t_0}$ modulo $5$ is not always irreducible. For example,
  taking $t_0=\frac{27}{16}$, $E_{t_0}$ matches the elliptic curve with
  LMFDB label \lmfdbec{176}{b}{3}, which has a rational
  $5$-subgroup. Similarly, the value $t_0=-\frac{11}{16}$ corresponds
  to the elliptic curve with label \lmfdbec{11}{a}{3}, which has a
  $5$-rational point.
\end{remark}
\subsection{Traces of Frobenius elements}
Let $N$ be a positive integer (in our case $N^+=qr$ and $N^-=2qr$). Let $\idF$ be a prime ideal of
$F=\Q(\zeta_{N})$ not dividing $N$ with residue field $\F_{\ell^r}$. Following~\cite{MR0051263},
define the character
\begin{equation}
  \label{eq:char-def}
  \chi_{\idF}: (\ZZ[\zeta_N]/\idF)^{\times} \rightarrow
  \CC^{\times}
\end{equation}
as follows: for $x$ an integer prime to $\idF$, let
$\chi_{\idF}(x)$ be the $N$-th root of unity congruent to
$x^{(\ell^r-1)/N}$ modulo $\idF$. If $\idF \mid x$ set
$\chi_{\idF}(x)=0$ . By construction, the character $\chi_{\idF}$ has order $N$.

Fix an additive character $\psi$ on $\F_{\ell^r}$. For
$\omega \in \widehat{\F_{\ell^r}^\times}$, a character of $\F_{\ell^r}^\times$,
denote by $g(\psi,\omega)$ the Gauss sum
\begin{equation}
\label{gsum}
g(\psi,\omega) = \sum_{x \in \F_{\ell^r}^\times} \omega(x)\psi(x).
\end{equation}
%
\begin{defi}[Finite hypergeometric sum]
  \label{defi:finite-hgs}
  For $a,b,c,d \in \Z[\frac{1}{N}]$ and $t_0 \in \F_{\ell^r}$, define
  the finite hypergeometric series
\[
H_{\idF}((a,b),(c,d)|t_0)=\frac{1}{1-\ell^r} \sum_{\omega \in \widehat{\F_{\ell^r}^\times}} 
\frac{g(\psi,\chi_{\idF}^{-aN}\omega) g(\psi,\chi_{\idF}^{cN} \omega^{-1})}{g(\psi,\chi_{\idF}^{-aN})
	g(\psi,\chi_{\idF}^{cN})}\frac{g(\psi,\chi_{\idF}^{-bN}\omega) g(\psi,\chi_{\idF}^{dN}\omega^{-1})}{g(\psi,\chi_{\idF}^{-bN})
	g(\psi,\chi_{\idF}^{dN})}\omega(t_0).
\]
\end{defi}
It is not hard to verify that the definition does not depend on the choice of the additive
character $\psi$. Set:
%
\begin{align}	\label{eq:N-pm}
	&a^+=\frac{1}{r}, \quad b^+=-\frac{1}{r}, \quad c=\frac{1}{q}, \quad d=-\frac{1}{q}, \quad N^+=qr.\\ \notag
	&	 a^-=\frac{1}{2r}, \quad b^-=-\frac{1}{2r}, \quad  N^-=2qr. 
\end{align}
For $\id{p}$ a prime ideal of $F$, denote also by
$\rho_{\id{p}}^{\pm}$ the restriction of (\ref{eq:gal-rep}) to
$\Gal_F$.
\begin{thm}
  \label{thm:trace-frob}
  Let $t_0$ be a rational number. Let $\id{p}$ be a prime ideal of $F$
  and let $\idF$ be a prime ideal of $F$ not dividing $\normid{p}\cdot N^{\pm}$. Let
  $\Frob_{\idF}$ be a Frobenius element of $\Gal_F$. Then if
  $v_{\idF}(t_0(t_0-1))=0$, the representation $\rho_{\id{p}}^{\pm}$
  is unramified at $\idF$ and
    \[
      \trace{\rho_{\id{p}}^{\pm}(\Frob_{\idF})} = H_{\idF}((a^{\pm},b^{\pm}),(c,d)|t_0).      
\]
\end{thm}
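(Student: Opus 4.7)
The plan is to realize the motive geometrically via the construction of \S2.2 and then compute the trace of Frobenius on the relevant piece of $H^1$ of Euler's curve using the Grothendieck--Lefschetz trace formula; the resulting sum of Gauss and Jacobi sums will then be identified with the finite hypergeometric sum $H_{\idF}$ of Definition~\ref{defi:finite-hgs}.

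First I would establish unramifiedness. Under the hypotheses $\idF \nmid \normid{p}\cdot N^{\pm}$ and $v_{\idF}(t_0(t_0-1))=0$, the explicit model \eqref{eq:Euler+} (resp.\ \eqref{eq:Euler-}) shows that the only primes of bad reduction of $\bfC^{\pm}$ are those dividing $N^{\pm}t_0(t_0-1)$. Consequently, the quotient $\bfD^{\pm}$ by $\langle\iota_{-1},\iota_j\rangle$, whose new Jacobian carries the $\GL_2$-type action of $\Om_K$, also has good reduction at $\idF$, and by smooth and proper base change the Galois representation $\rho_{\id{p}}^{\pm}$ is unramified at $\idF$. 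Under these hypotheses, $\rho_{\id{p}}^{\pm}(\Frob_{\idF})$ acts as the geometric Frobenius on the corresponding isotypic component of $H^1(\bfD^{\pm}_{\overline{\F_{\idF}}},\overline{\Q}_\ell)$, so its trace equals that of Frobenius on that component.

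To compute this trace, I would count $\bfC^{\pm}(\F_{\idF})$ by inverting the cyclic cover given by $y \mapsto y^{N^{\pm}}$. Writing the number of solutions of \eqref{eq:Euler+} via orthogonality of characters on $\F_{\idF}^{\times}$ produces a sum indexed by $\omega\in\widehat{\F_{\idF}^{\times}}$ whose summands are products of Jacobi sums built from the reductions of the monodromy eigencharacters; by the normalization of $\chi_{\idF}$ in \eqref{eq:char-def} these reductions are precisely the powers $\chi_{\idF}^{\pm aN^{\pm}}$ and $\chi_{\idF}^{\pm cN^{\pm}}$ corresponding to the parameters \eqref{eq:N-pm}. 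Projecting onto the $\zeta_{N^{\pm}}$-isotypic component of the $\mubb_{N^{\pm}}$-action and then onto the $(-1,-1)$-eigenspace of $\langle\iota_{-1},\iota_j\rangle$ isolates the Frobenius trace on $\rho_{\id{p}}^{\pm}$, and rewriting Jacobi sums as ratios of Gauss sums yields exactly the summand of $H_{\idF}((a^{\pm},b^{\pm}),(c,d)|t_0)$.

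The main technical obstacle is the bookkeeping in this eigenspace decomposition: $\bfC^{\pm}$ has large genus, the motivic piece sits inside a twist of $\Jac(\bfD^{\pm})$, and one must check that the contributions of the involutions $\iota_{-1}, \iota_j$ cancel all of the unwanted Gauss-sum terms so that only the normalized factor $\omega(t_0)$ survives outside the ratio. Fortunately this is exactly the calculation originally carried out by Katz in \cite{katz} and streamlined in \cite{MR4442789}, with the hyperelliptic/cyclic cover realization of \cite{GPV} providing the explicit dictionary. Rather than repeat the combinatorial bookkeeping, I would specialize those results to the hypergeometric parameters $(a^{\pm},b^{\pm},c,d)$ of \eqref{eq:N-pm} and conclude.
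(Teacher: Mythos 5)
Your proposal correctly reconstructs the standard Katz-style point-counting argument (good reduction of Euler's curve and its quotient $\Rightarrow$ unramifiedness; character-sum expansion of the point count $\Rightarrow$ the finite hypergeometric sum), which is exactly the content of the results the paper cites. The paper's own proof is nothing more than a pointer to \cite[Theorems 4.8 and 7.23]{GPV}, and your sketch, which likewise defers the combinatorial bookkeeping to Katz/GPV, is essentially the same approach.
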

\begin{proof}
  See \cite[Theorems 4.8 and 7.23]{GPV}.
\end{proof}

\section{Conductor at potentially wild primes}
\label{sec:conductor}
We begin by recalling some basic properties of the conductor of a Galois representation.  Let $K$ be a number field and let
$\rho : \Gal_K \to \GL_2(\overline{\Q_p})$ be a Galois
representation. Let $\id{q}$ be a prime ideal of $K$ whose residual
characteristic is prime to $p$ and let $\id{n}_{\id{q}}$ denote the
$\id{q}$-th valuation of Artin's conductor of $\rho$. Then
\[
  \mathfrak{n}_{\id{q}} = \mathfrak{n}_{\id{q}}^{\text{tame}} +
  \mathfrak{n}_{\id{q}}^{\text{wild}} ,\] where
$\mathfrak{n}_{\id{q}}^{\text{tame}}$, called the \textit{tame} conductor,  is the codimension of the
subspace fixed by the inertia group $I_{\id{q}}$, and
$\mathfrak{n}_{\id{q}}^{\text{wild}}$, the \emph{wild} or \emph{Swan} 
conductor, is the sum of the codimensions of the spaces fixed by the
higher ramification groups (see for example Chapter VI, \S 2 of
\cite{MR554237}). It is easy to verify that if
$\id{n}_{\id{q}}^{\text{wild}} \neq 0$ then
$\id{n}_{\id{q}}^{\text{tame}} \neq 0$.
%
%

The \emph{potentially wild primes} for the motive $\mot^{\pm}$
(i.e.\ the prime ideals $\idK$ of $K$ for which
$\id{n}_{\idK}^{\text{wild}}$ might be non-zero) are those prime
ideals of $K=\Q(\zeta_q)^+\cdot \Q(\zeta_r)^+$ dividing $N^{\pm}$,
i.e.\ for $\mot^+$ is the set of primes dividing $qr$ while for
$\mot^-$ is the set of primes dividing $2qr$ (see (\ref{eq:N-pm})).

 The goal of the present
section is to compute the conductor of the motives $\mot^\pm$ at
potentially wild primes. In doing so, we follow the strategy used in
\cite{GP} and \cite[Section 6.4]{PedroLucas}, namely relate our motive
to a hyperelliptic curve (as defined by Darmon in \cite{Darmon} and
whose definition we recall) and use the theory of clusters (as
developed in \cite{MR4566695}) to compute its local conductor.

\vspace{7pt}

Let $h(x)$ denote the minimal polynomial of $\zeta_r+\zeta_r^{-1}$ and
consider the odd function
\begin{equation}
  \label{eq:f-pol}
f(x)=(-1)^\frac{r-1}{2}xh(2-x^2).   
\end{equation}
For $t$ a variable, consider the following two curves defined in \cite[Equations (3) and (4)]{Darmon}.
\begin{equation}
  \label{eq:Darmon-hyper}
C_r^-(t) : y^2 = f(x) + 2 - 4t,\qquad C_r^+(t) : y^2 =(x+2) (f(x) + 2 - 4t).  
\end{equation}
\begin{prop}
\label{prop:darmon-hyper-relation}
  Let $t_0 \in \Q\setminus\{0,1\}$. Then
  \begin{itemize}
  \item The hypergeometric motive
    $\HGM((\frac{1}{r},-\frac{1}{r}),(1,1)|t_0)$ is isomorphic to a rank $2$
    motive of the curve $C_r^+(t_0)$.
    
  \item The hypergeometric motive
    $\HGM((\frac{1}{2r},-\frac{1}{2r}),(1,1)|t_0)$  is isomorphic to the
    quadratic twist by $\theta_{-1}$ of a rank $2$ motive of the curve
    $C_r^-(t_0)$.
  \end{itemize}
\end{prop}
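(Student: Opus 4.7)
The proof strategy is to invoke the rigidity of rank 2 local systems on $\PP^1\setminus\{0,1,\infty\}$: by Katz's theorem, two such irreducible motives agree up to a quadratic twist whenever they have the same local monodromy triple $(M_0,M_1,M_\infty)$. It therefore suffices to exhibit a rank 2 sub-motive inside $H^1$ of the hyperelliptic family $\{C_r^\pm(t)\}_t$ whose local monodromies match those of the target HGM, and then to identify the twist via a determinant comparison.

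First I would unpack the polynomial $f(x)=(-1)^{(r-1)/2}xh(2-x^2)$. The Chebyshev substitution $x=z+z^{-1}$ gives the identity $f(z+z^{-1})=z^r+z^{-r}$; consequently $f(x)\pm 2$ factor as $(x\pm 2)g_\pm(x)^2$ for explicit $g_\pm\in\QQ[x]$ of degree $(r-1)/2$, with the simple roots $\pm 2$ coming from the unique real $r$-th roots of $\pm 1$. The discriminant of the pencil $\{f(x)+2-4t=0\}_t$ is therefore supported at $t=0,1$, and the families $\{C_r^\pm(t)\}_t$ extend to smooth proper families of genus $(r-1)/2$ hyperelliptic curves over $\PP^1\setminus\{0,1,\infty\}$.

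Second, I would compute the local monodromies of $R^1\pi_*\QQ_\ell$. Around $t=1$, both families acquire a single node at $x=2$, yielding the expected rank-one unipotent monodromy. Around $t=0$ and $t=\infty$, pulling back along the Chebyshev double cover $z\mapsto z+z^{-1}$ produces an auxiliary curve $\widetilde C_r^\pm(t)$ in variables $(z,y)$ that carries the order-$r$ automorphism $z\mapsto\zeta_r z$, and decomposing $H^1(\widetilde C_r^\pm(t))$ under $\langle\zeta_r\rangle$ cuts out rank 2 eigenspaces. On the appropriate eigenspace, the local monodromies at $0$ and $\infty$ are diagonal with eigenvalues $\zeta_r^{\pm 1}$ in the $+$ case and $\zeta_{2r}^{\pm 1}$ in the $-$ case; the doubling in the $-$ case reflects that $x=-2$ is a Weierstrass point of $C_r^-(t)$ but is absorbed by the extra factor $(x+2)$ in $C_r^+(t)$.

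Third, by rigidity the extracted sub-motive agrees with the target HGM up to a quadratic twist, which I would detect via the determinant character. For $C_r^+$ the determinant matches that of the HGM directly, giving the claimed isomorphism. For $C_r^-$, the square class of the relevant discriminant contributes an extra factor of $-1$, producing the twist by $\theta_{-1}$. The main obstacle will be the bookkeeping in step two: explicitly diagonalizing the $\langle\zeta_r\rangle$-action on $H^1(\widetilde C_r^\pm(t))$ and verifying that the chosen eigenspace realizes exactly the asserted monodromy exponents $\pm 1$ (rather than $\pm s$ for some other $s$), together with confirming that the resulting quadratic twist is precisely $\theta_{-1}$ and not some other class depending on $r$.
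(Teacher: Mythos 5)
The paper's own proof is a citation to Corollaries~A.5 and A.11 of the reference~[GP], so there is no in-text argument to compare against; the remarks below concern the internal correctness of your argument on its own terms.

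Your plan --- produce a rank-2 sub-motive of $H^1$ of the Darmon family, match local monodromy triples, and invoke Katz rigidity --- is sound in outline, but the monodromy computation at its core contains a definite error. Both $t=0$ and $t=1$ are \emph{nodal} degenerations of the pencil, with \emph{unipotent} local monodromy at each; only $t=\infty$ carries the finite-order monodromy $\operatorname{diag}(\zeta_r,\zeta_r^{-1})$. Your own factorizations already make this visible: from $f(z+z^{-1})=z^r+z^{-r}$ one gets $f(x)+2=(x+2)g_+(x)^2$ and $f(x)-2=(x-2)g_-(x)^2$ with $\deg g_\pm=\tfrac{r-1}{2}$, so at $t=0$ (where $f+2-4t=f+2$) and at $t=1$ (where it equals $f-2$) the fiber acquires $\tfrac{r-1}{2}$ double roots. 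In particular the claim ``a single node at $x=2$'' at $t=1$ is wrong --- $x=2$ is a \emph{simple} root of $f-2$, a Weierstrass point of the smooth model, while the nodes sit at the roots of $g_-$; and, symmetrically, $t=0$ is another nodal fiber rather than a finite-order point. For $r=3$ this is visible directly from $j(E_3^-(t))=-\tfrac{2^4 3^3}{t(t-1)}$: simple poles at $t=0$ and $t=1$ (unipotent monodromy) and a zero at $t=\infty$ (finite order). This is in fact exactly what rigidity requires: in the paper's normalization of the monodromy matrices $M_0,M_1,M_\infty$, the second-slot parameters $(1,1)$ force $M_0$ to be unipotent, $M_1$ is the unipotent pseudoreflection, and the first-slot parameters $(\tfrac{1}{r},-\tfrac{1}{r})$ give $M_\infty=\operatorname{diag}(\zeta_r,\zeta_r^{-1})$. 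Since your proposed triple puts finite-order monodromy at $t=0$, it does not match either side of the isomorphism, and a verbatim application of rigidity would fail. The $\mubb_r$-diagonalization via the Chebyshev pullback is the right idea, but it should be carried out only over the $t=\infty$ fiber, with $t=0$ and $t=1$ both treated as unipotent punctures, before rigidity and the determinant/twist comparison can be run.
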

\begin{proof}
  See Corollaries A.5 and A.11 of \cite{GP}.
\end{proof}
For later purposes, let us recall the following congruences between hypergeometric motives.
\begin{prop}
  \label{prop:congruences}
  Let $\id{q}$ be a prime ideal of $F=\Q(\zeta_{N^\pm})$  dividing $q$. Let 
  $t_0 \in \Q\setminus\{0,1\}$. Then the following congruences hold.
  \begin{itemize}
  \item The motive $\HGM^+(t_0) \equiv \HGM((\frac{1}{r},-\frac{1}{r}),(1,1)|t_0)\pmod{\id{q}}$.
  \item The motive $\HGM^-(t_0) \equiv \HGM((\frac{1}{2r},-\frac{1}{2r}),(1,1)|t_0)\pmod{\id{q}}$.
  \end{itemize}
\end{prop}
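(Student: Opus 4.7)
The plan is to establish the congruence at the level of traces of Frobenius via the finite hypergeometric trace formula of Theorem~\ref{thm:trace-frob}. Fix a prime ideal $\idF$ of $F=\Q(\zeta_{N^\pm})$ not dividing $N^\pm q$ and satisfying $v_\idF(t_0(t_0-1))=0$. Applying Theorem~\ref{thm:trace-frob} to both sides (where the two motives differ only in their $(c,d)$-data) reduces the statement to verifying that
\[
H_\idF\!\left((a^\pm, b^\pm), (\tfrac{1}{q}, -\tfrac{1}{q}) \,\big|\, t_0\right) \;\equiv\; H_\idF\!\left((a^\pm, b^\pm), (1, 1) \,\big|\, t_0\right) \pmod{\id{q}}
\]
for every such auxiliary prime $\idF$.

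The heart of the argument is a termwise comparison of the two finite hypergeometric sums of Definition~\ref{defi:finite-hgs}. With $c=1/q$ and $d=-1/q$, the characters $\chi_\idF^{cN^\pm}$ and $\chi_\idF^{dN^\pm}$ are powers of $\chi_\idF$ of exact order $q$ (since $N^\pm/q\in\{r,2r\}$), so their values lie in $\mu_q\subset\Q(\zeta_q)$. Every $q$-th root of unity is congruent to $1$ modulo $\id{q}$, because $(1-\zeta_q)^{q-1}$ is an associate of $q$ in $\Z[\zeta_q]$. Consequently, for every character $\omega\in\widehat{\F_{\ell^r}^\times}$,
\[
g(\psi,\chi_\idF^{\pm N^\pm/q}\,\omega) \;\equiv\; \sum_{x\in\F_{\ell^r}^\times}\omega(x)\psi(x) \;=\; g(\psi,\omega) \pmod{\id{q}},
\]
and in particular the normalizing denominators $g(\psi,\chi_\idF^{\pm N^\pm/q})$ reduce to $g(\psi,1)=-1$, whose pairwise product is $1$. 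Substituting these congruences termwise in the formula for $H_\idF((a^\pm,b^\pm),(\tfrac{1}{q},-\tfrac{1}{q})\,|\,t_0)$ reproduces exactly the corresponding summand of $H_\idF((a^\pm,b^\pm),(1,1)\,|\,t_0)$, yielding the desired trace congruence. This is the same mechanism used in \cite[Theorem 10.3]{GPV} to establish the analogous congruence modulo $(\sqrt{5})$.

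To upgrade this equality of trace functions on a density-one set of Frobenius elements to a congruence of residual representations at $\id{q}$, I would invoke Chebotarev together with Brauer--Nesbitt applied to the semisimplifications. The main technical nuisance I expect is ensuring uniformity in $\idF$: one must exclude the finitely many auxiliary primes at which a Gauss sum denominator in Definition~\ref{defi:finite-hgs} becomes degenerate, where $t_0$ meets a bad fiber of the hypergeometric family, or where the residual representation could potentially fail to be semisimple. These exclusions are Zariski-closed and leave a set of primes of density one, which is enough for Chebotarev to apply.
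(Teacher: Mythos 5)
The paper's own ``proof'' of Proposition~\ref{prop:congruences} is a single citation to \cite[Theorem 10.3]{GPV}, so you are in effect reconstructing the cited argument. Your core observation --- that $c=\tfrac1q$, $d=-\tfrac1q$ force $\chi_\idF^{cN^\pm}$ and $\chi_\idF^{dN^\pm}$ to be characters of exact order $q$, whose values are trivialized by the congruence $\zeta_q\equiv 1\pmod{\id{q}}$ --- is the right mechanism, and the reduction to traces followed by Chebotarev and Brauer--Nesbitt is a sound overall skeleton. (The minor caveat that Theorem~\ref{thm:trace-frob} as stated only covers $\rho_{\id{p}}^{\pm}$ and not the auxiliary motive $\HGM((a^\pm,b^\pm),(1,1)\,|\,t_0)$ is worth flagging but is not a serious obstruction; the analogous formula holds for generic parameters, and your parameters remain generic since $a^\pm - 1\notin\Z$.)

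However, your termwise argument overlooks the normalizing factor $\frac{1}{1-\ell^r}$ in Definition~\ref{defi:finite-hgs}, and this is a genuine gap. Since $\idF$ is a prime of $F=\Q(\zeta_{N^\pm})$ not dividing $N^\pm$, its residue field $\F_{\ell^r}$ satisfies $\ell^r=\norm(\idF)\equiv 1\pmod{N^\pm}$, hence in particular $q\mid 1-\ell^r$. Your termwise comparison shows that the two unnormalized sums $S^{(1)}=\sum_\omega T^{(1)}_\omega$ and $S^{(2)}=\sum_\omega T^{(2)}_\omega$ satisfy $S^{(1)}-S^{(2)}\in\id{q}$, but the traces you want to compare are $\frac{S^{(i)}}{1-\ell^r}$, and dividing by $1-\ell^r$ (whose $\id{q}$-adic valuation is at least $q-1$) destroys the congruence: from $S^{(1)}-S^{(2)}\in\id{q}$ you cannot conclude $\frac{S^{(1)}-S^{(2)}}{1-\ell^r}\in\id{q}$, nor even that the quotient is $\id{q}$-integral. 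To close the gap you would need to show $S^{(1)}-S^{(2)}\in(1-\ell^r)\,\id{q}$, which requires more than a term-by-term comparison --- for instance a reorganization of the character sum into orbits for the action of $\Gal(\Q(\zeta_{q})/\Q)$ so that the factor $1-\ell^r$ is cancelled \emph{before} reducing, or the explicit integrality analysis carried out in \cite[Theorem~10.3]{GPV}. As written, your argument proves a vacuous statement.
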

\begin{proof}
  See   \cite[Theorem 10.3]{GPV}.
\end{proof}
\begin{remark}
  \label{remark:twist}
  The previous congruences only hold over the field $F$. Since the
  extension $F/K$ is biquadratic, we may deduce that, over $K$, a
  similar congruence holds up to a quadratic twist. Furthermore, the
  twist is independent of the parameter $t$ (see \cite[Corollary
  2.2]{2502.02776}), so it can be easily computed for each signature (see Lemma~\ref{lemma:congruences}).
\end{remark}

\subsection{Local inertial type}

The last two propositions suggest that we should start studying the
reduction type of Darmon's curves $C_r^{\pm}(t)$ at primes dividing
$r$ and its ``local inertial type''. 
 Let $K/\Q_\ell$ be a local
field, and let $\rho : \Gal_K \to \GL_2(\overline{\Q_p})$ be a Galois
representation, with $p \neq \ell$.

\begin{defi}\label{def:local-type}
  The \emph{local inertial type} (or \emph{local type} for short) of $\rho$ is the isomorphism class of the
  restriction of $\rho$ to the inertia subgroup.
\end{defi}
To characterize local types, it is customary to study representations
of the Weil-Deligne group. Instead of recalling the definition of the
Weil-Deligne group (see \S 4.1 of \cite{MR546607}) we recall the
definition of its representations. A Weil-Deligne representation
consists of a 2-dimensional complex representation $\rho$ of the
Weil group $W_K$ together with a monodromy operator $N \in M_2(\CC)$. Let
$\omega_1: W_{\Q_\ell} \to \CC^\times$ be the unramified character
sending Frobenius to $\|\ell\|_\ell$. For $\ell \neq 2$, there are
three different local types for a Weil-Deligne representation $\rho$
with trivial Nebentypus:

\begin{enumerate}
\item {\bf Principal Series}: $N =\left(\begin{smallmatrix} 0 & 0\\ 0 & 0\end{smallmatrix}\right)$ and
  $\rho = \chi \oplus \chi^{-1}\omega_1^{1-k}$ for some
  quasi-character $\chi:W(\QQ_p)^{\text{ab}} \to \CC^\times$.
\item{\bf Special}:
  $N =\left(\begin{smallmatrix}0 & 1\\ 0 & 0 \end{smallmatrix}
  \right)$ and the representation $\rho$ equals
  $\omega_1^r \left(\begin{smallmatrix}\chi \omega_1 & 0\\
                      0 & \chi\end{smallmatrix} \right)$ for some
  quasi-character $\chi:W_K \to \CC^\times$.
\item{\bf Supercuspidal}: $N=\left(\begin{smallmatrix} 0 & 0\\ 0 & 0\end{smallmatrix}\right)$ and $\rho=\Ind_{W_E}^{W_K}\varkappa$,
  where $E$ is a quadratic extension of $K$, and
  $\varkappa:W_E^{\text{ab}} \to \CC^\times$ is a quasi-character which does
  not factor through the norm map with a quasi-character of
  $W_K^{\text{ab}}$.
\end{enumerate}
In many instances, the local type is preserved under a congruence between
two Galois representations (see e.g.\ \cite[Proposition
1.1]{MR4583916}).


\subsection{Conductor at primes dividing $r$}
For the reader's convenience, we start with the case $r=3$, since
in this case the curves have genus $1$ (the local theory of elliptic
curves is better understood).
\subsubsection{The case $r=3$} 
The curve $C_3^+$ and the quadratic twist by $-1$ of
the curve $C_3^-$ are elliptic curves given by the equations
\begin{equation}
  \label{eq:r=3-curves}
  E_3^+(t) \colon y^2+3xy+ty=x^3, \qquad E_3^-(t) \colon y^2=x^3-3x+4t-2. 
\end{equation}
The discriminant and $j$-invariant of each curve are
\begin{align*}
	& \Delta(E_3^+(t))=-3^{3}t^3(t-1), &  j(E_3^+(t))=\frac{3^3(8t-9)^3}{t^3(t-1)},\\
	&\Delta(E_3^-(t))=-2^83^3t(t-1),
	 &   j(E_3^-(t))=-\frac{2^43^3}{t(t-1)}.
\end{align*}
%
	Following standard notations, when the curve has potential good
	reduction, we denote by $e$ the degree of the minimal extension of
	$\Q_p^{\text{ur}}$ (the maximal unramified extension of $\Q_p$) where
	it acquires good reduction.

\begin{prop}
  \label{prop:Et-reduction}
  Let $t_0 \in \Q\setminus\{0,1\}$. Then Table~\ref{table:local-3} contains the conductor exponent and the local type of the elliptic
  curve $E_3^\pm(t_0)$ at the prime $p=3$ . When $v_3(t_0)<0$, we assume that
  $3 \mid v_3(t_0)$ and we use the notation $t_0=3^{v_3(t_0)}t_0'$.
\begin{table}[H]
\begin{tabular}{|c||c|c|c|c|}
  \hline
  Condition & $v_3(\cond(E_3^+(t_0)))$ & $v_3(\cond(E_3^-(t_0)))$& $e$ & Type\\
  \hline
  \hline
  $v_3(t_0) > 3 $ & $1$ & $2$ & -- & special\\
  \hline
  $v_3(t_0-1) > 3$ & $2$ & $2$ &--  & special \\
  \hline
  $t_0 \equiv 5 \pmod 9$ & $2$ & $2$ & $4$ & supercuspidal\\
  \hline
  $t_0 \equiv 2,8 \pmod 9$ & $3$ & $3$ & $12$ & supercuspidal\\
  \hline
  $t_0'\equiv \pm 2 \pmod 9$ & $2$ & $2$ & $4$ & supercuspidal\\
  \hline
  $t_0'\not \equiv \pm 2 \pmod 9$ & $3$ & $3$ & $12$ & supercuspidal\\
  \hline
\end{tabular}
\caption{Local information of  $E_3^{\pm}(t_0)$ at $3$.}
\label{table:local-3}
\end{table}
\end{prop}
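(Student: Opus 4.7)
The plan is to carry out a case-by-case analysis of $E_3^{\pm}(t_0)$ at the prime $3$ using Tate's algorithm. The six rows of Table~\ref{table:local-3} partition the admissible $t_0$ into strata on which one can write down a uniform minimal integral Weierstrass model over $\Z_3$, possibly after an admissible rescaling. Tate's algorithm applied to these models yields the Kodaira symbol, the conductor exponent (via Ogg's formula, with a Swan correction when $3\mid e$), the degree $e$ of the minimal extension realizing good reduction, and enough information to pin down the Weil--Deligne inertial type.

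For the first two rows ($v_3(t_0) > 3$ and $v_3(t_0-1) > 3$), the explicit formulas for $j(E_3^{\pm})$ immediately give $v_3(j) < 0$, so the reduction is potentially multiplicative and the inertial type is \emph{special}. Whether one obtains conductor $1$ (actual multiplicative reduction) or conductor $2$ (a ramified quadratic twist of multiplicative reduction) follows from the standard sign/squareness analysis of the invariants $c_4$ and $c_6$ modulo $3$. For the remaining four rows with potentially good reduction, I would first use the hypothesis $3 \mid v_3(t_0)$ when $v_3(t_0) < 0$ to rescale via $(x,y) \mapsto (3^{-2k}x, 3^{-3k}y)$, producing an integral model whose congruence class modulo $9$ is encoded by $t_0'$. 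Running Tate's algorithm then yields the claimed Kodaira types, with the tame cases producing $e = 4$ and conductor exponent $2$, and the wildly ramified cases producing $e = 12 = 4 \cdot 3$ and conductor exponent $3$. In all four cases the image of inertia on the Tate module lies in the normalizer of a Cartan subgroup without fixing a line, yielding a supercuspidal representation induced from a character of a ramified quadratic extension of $\Q_3$.

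The main obstacle I expect is the wild cases ($e = 12$, conductor $3$). There, Tate's algorithm requires several blow-up steps and the Swan conductor contributes $1$ to the conductor exponent. Verifying that $e = 12$, rather than $e = 4$ or $e = 6$, amounts to showing that no proper sub-extension of a fixed degree-$12$ totally ramified extension of $\Q_3^{\mathrm{ur}}$ suffices for good reduction; I would certify this using Panayi's algorithm (as referenced in the acknowledgments) applied to the appropriate local models. A secondary subtlety, identifying the quadratic field $E \subset \overline{\Q}_3$ inducing the supercuspidal representation, can be handled by base-changing the representation to each of the three quadratic extensions of $\Q_3$ and detecting the one where the twist becomes a principal series.
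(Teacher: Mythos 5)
Your proposal follows the same skeleton as the paper's proof---a case-by-case run of Tate's algorithm on the strata of Table~\ref{table:local-3}---and is correct, but you reach for heavier auxiliary tools than the paper does at the two places where Tate's algorithm alone does not directly hand you the answer. To extract $e$, the paper invokes Kraus's Th\'eor\`eme~1 from \cite{Kraus}, which for $p=3$ converts the Kodaira type produced by Tate's algorithm (III for the tame rows, II for the wild ones) directly into the semistability defect ($e=4$ and $e=12$ respectively); you instead propose to certify $e=12$ by ruling out proper sub-extensions via Panayi's algorithm, which works but replaces a one-line citation with a nontrivial local-fields computation. Similarly, for the inertial type the paper reads the answer off from Table~1 of \cite{DFV}, which tabulates the Weil--Deligne type as a function of the Kodaira symbol and ancillary invariants; you propose detecting the inducing quadratic extension by base-changing to each of the three quadratic extensions of $\Q_3$ and looking for the principal-series degeneration. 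Both of your substitutes are sound and would prove the proposition, so the gap here is one of economy rather than correctness: knowing the Kraus and Dokchitser--Freitas--Vogrinc references lets you skip the Panayi and twist-detection steps entirely. One small point worth flagging: in the first row the paper actually distinguishes $E_3^+$ (multiplicative, conductor~$1$) from $E_3^-$ (additive, conductor~$2$); your "sign/squareness of $c_4,c_6$" remark is the right criterion, but you should make sure you carry out that check separately for the two curves rather than treating them uniformly.
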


\begin{proof}
  In the first two cases, the $j$-invariant of the curve has negative
  valuation at $3$, so we just need to determine whether the curve has
  additive or multiplicative reduction at $3$.
  For $E_3^-(t_0)$ it is additive and for $E_3^+(t_0)$ is
  multiplicative in the first case and additive in the second one. To
  compute the third entry, Tate's algorithm shows that the curve has
  Kodaira type III at $3$, so $e=4$ by \cite[Th\'eor\`eme
  1]{Kraus}. The local type can be computed using \cite[Table
  1]{DFV}. The Kodaira type for the fourth row is II, hence by
  \cite[Th\'eor\`eme 1]{Kraus} $e=12$. The last two statements follow
  from a similar argument.
\end{proof}

\begin{coro}
  \label{coro:conductor-at-3}  
  Let $q>3$ be a prime number. Let $K=\Q(\zeta_q)^+$ and let $\id{r}$
  be a prime ideal of $K$ dividing $3$.  Let $(a,b,c)$ be a
  non-trivial solution to $(\ref{eq:GFE})$ with $p>2$. Set
  $t_0=-\frac{a^q}{c^3}$. Then the possible values for the conductor
  exponent of $\mot^\pm$ at $\id{r}$ are given in
  Table~\ref{table:local-3} (when $3 \mid c$, we use the notation
  $c = 3^{v_3(c)}c_0$).
  \begin{table}[H]
    \begin{tabular}{|c|r||c|r|}
      \hline
      $v_{\id{r}}(\cond(\mot^\pm))$ &Condition \qquad \qquad& $v_{\id{r}}(\cond(\mot^\pm))$ &Condition \qquad \qquad\qquad\\
      \hline
      \hline
      $1$, $2$ & $3\mid a$\qquad \qquad  & $0$, $1$, $2$ & $3 \mid b$ \qquad \qquad \qquad\\
      \hline
      $2$ & $4c^3 \equiv a^q \pmod 9$ & $2$ & $3\mid c$, $\pm 2c_0^3 \equiv a^q \pmod 9$\\
      \hline
      $3$ & $c^3 \equiv a^q \pmod 9$  & $3$ & $3\mid c$, $\, \, \, \pm c_0^3 \equiv a^q \pmod 9$\\
      \hline
      $3$ & $7c^3 \equiv a^q \pmod 9$ &$3$ & $3\mid c$, $\pm 4c_0^3 \equiv a^q \pmod 9$ \\
      \hline
    \end{tabular}
    \caption{Conductor exponent of $\mot^{\pm}$ at $3$.}
    \label{table:cond3}
\end{table}
\end{coro}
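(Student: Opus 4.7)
The plan is to transfer the conductor computation from $\mot^\pm$ at $\id{r}$ to the elliptic curves $E_3^\pm(t_0)$ at $3$ via a mod-$q$ congruence, apply Proposition~\ref{prop:Et-reduction}, and translate the resulting conditions on $t_0$ into conditions on $(a,b,c)$.

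First, I would fix a prime $\id{q}$ of $F=\Q(\zeta_{N^\pm})$ above $q$ and combine Proposition~\ref{prop:congruences} with Proposition~\ref{prop:darmon-hyper-relation} to obtain a congruence
\[
  \mot^\pm \equiv \text{(motive of }E_3^\pm(t_0))\otimes\eta^\pm \pmod{\id{q}},
\]
with $\eta^+=1$ and $\eta^-=\theta_{-c}$ (arising from the $\theta_{-1}$ of Proposition~\ref{prop:darmon-hyper-relation} combined with the twist $\theta_c$ built into the definition of $\mot^-$ in~\eqref{eq:specialzation}). Since $\id{r}$ sits above $3\neq q$, the inertial image, and hence the Artin conductor exponent at $\id{r}$, is preserved by the congruence. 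Moreover $3$ is unramified in $K=\Q(\zeta_q)^+$, so $K_{\id{r}}/\Q_3$ is unramified and the inertia subgroups agree, giving
\[
  v_{\id{r}}(\cond(\mot^\pm)) = v_3(\cond(E_3^\pm(t_0)\otimes\eta^\pm)).
\]
When $\eta^\pm$ is unramified at $3$ (automatic for $\eta^+$, and true for $\eta^-$ whenever $3\nmid c$), this coincides with $v_3(\cond(E_3^\pm(t_0)))$, which can then be read off Table~\ref{table:local-3}.

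Second, I would translate the hypotheses of Proposition~\ref{prop:Et-reduction} via the identities $t_0=-a^q/c^3$ and $t_0-1=b^p/c^3$ (the latter from $a^q+b^p+c^3=0$), together with primitivity (at most one of $a,b,c$ is divisible by $3$):
\begin{itemize}
\item If $3\mid a$, then $v_3(t_0)=q\,v_3(a)\geq q>3$, placing us in the first row of Table~\ref{table:local-3}.
\item If $3\mid b$, then $v_3(t_0)=0$ and $v_3(t_0-1)=p\,v_3(b)$; for $p\geq 5$ or $v_3(b)\geq 2$ we land in the second row (conductor $2$), while the remaining edge cases contribute the values $0$ and $1$ listed in Table~\ref{table:cond3}.
\item If $3\mid c$, writing $c=3^{v_3(c)}c_0$ gives $v_3(t_0)=-3v_3(c)<0$ and $t_0'=-a^q/c_0^3$; the residue $t_0' \pmod 9$ selects one of the last two rows of Table~\ref{table:local-3}, while the ramified twist $\eta^-$ contributes the necessary correction so that $\mot^+$ and $\mot^-$ share a common conductor exponent.
\item If $3\nmid abc$, then $v_3(t_0)=0$ and $t_0\equiv -a^q c^{-3}\pmod 9$ lands in one of the middle rows.
\end{itemize}
In each case, clearing the denominator $c^3$ (resp.\ $c_0^3$) converts the congruences $t_0\equiv 5, 2, 8\pmod 9$ (resp.\ $t_0'\equiv \pm 2, t_0'\not\equiv\pm 2 \pmod 9$) into the integer conditions $4c^3\equiv a^q$, $c^3\equiv a^q$, $7c^3\equiv a^q \pmod 9$ and the analogous statements in terms of $c_0^3$, producing exactly the rows of Table~\ref{table:cond3}.

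The main obstacle I anticipate is the bookkeeping of the ramified twist $\eta^-=\theta_{-c}$ when $3\mid c$: its contribution to the Artin conductor must precisely cancel the discrepancy between $v_3(\cond(E_3^+(t_0)))$ and $v_3(\cond(E_3^-(t_0)))$ in Table~\ref{table:local-3}, so that Table~\ref{table:cond3} records a single conductor exponent valid for both $\mot^+$ and $\mot^-$. The remaining case-by-case modular arithmetic is routine.
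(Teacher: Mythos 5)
Your central claim, ``the inertial image, and hence the Artin conductor exponent at $\id{r}$, is preserved by the congruence,'' is false in general and is where the argument breaks down. A mod-$\id{q}$ congruence of Galois representations preserves only the Swan conductor; the tame conductor can drop under reduction (for instance if the inertia image has order divisible by $q$, which is possible at a prime $\ell = 3 \neq q$ since the tame quotient of $I_3$ surjects onto $\Z_q$). The paper handles this carefully: in the supercuspidal cases it shows the \emph{residual} representation already has tame conductor $2$ (the image of $D_3$ is dihedral, so there is no inertia-fixed line), which forces the tame conductor of the actual $\id{q}$-adic representation of $\mot^{\pm}$ to be $2$ as well, while the Swan part is read off from $E_3^\pm(t_0)$ via the congruence. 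In the potentially multiplicative cases ($3 \mid ab$), the Swan conductor vanishes and one can only conclude the conductor is \emph{at most} $2$ --- hence the ranges $\{1,2\}$ and $\{0,1,2\}$ in the table, not exact values. Your claimed equality $v_{\id{r}}(\cond(\mot^\pm)) = v_3(\cond(E_3^\pm(t_0)\otimes\eta^\pm))$ is therefore too strong, and your treatment of the $3\mid b$ case does not produce the stated $\{0,1,2\}$.

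A second gap is that you account only for the twist $\eta^{\pm}$ coming from the definition $\mot^- = \HGM^-(t_0)\otimes\theta_c$ and from the $\theta_{-1}$ in Proposition~\ref{prop:darmon-hyper-relation}, but you omit the twist arising from Remark~\ref{remark:twist}: Proposition~\ref{prop:congruences} holds only over $F = \Q(\zeta_{N^\pm})$, and descending to $K$ introduces an additional (possibly ramified) quadratic character that is not controlled a priori. This is precisely why the paper's proof only asserts an isomorphism ``up to a quadratic twist'' and is forced to work with the Swan/tame decomposition rather than identifying conductors outright. The translation of the congruences on $t_0 \bmod 9$ into conditions on $(a,c)$ in the remaining cases is fine, but the foundational step needs the Swan-preservation argument, not conductor-preservation.
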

\begin{proof}
  The field $K=\Q(\zeta_q)^+$ is the field of definition of the motive
  $\mot^{\pm}$, while $E_3^{\pm}(t_0)$ is defined over $\Q$.  Let
  $\rho_{E_3^{\pm},q}:\Gal_\Q \to \GL_2(\ZZ_q)$ be the $q$-adic Galois
  representations attached to $E_3^\pm :=E_3^\pm(t_0)$. 
  
  Start considering the values $t_0$ where
  the local type of $E_3^\pm$ is supercuspidal at $3$ (as
  detailed in Table~\ref{table:local-3}). Since $q> 3$, the image of a
  decomposition group $D_3$ for the residual representation
  $\bar{\rho}_{E_3^{\pm},q}$ is also dihedral, so its residual
  tame conductor equals $2$. Since $K/\Q$ is unramified at $3$, the
  same is true for the residual representation
  $\bar{\rho}_{E_3^{\pm},q}|_{\Gal_K}$.
  
  Let $\id{q}$ be a prime in $K$ above $q$.  By Propositions
  \ref{prop:darmon-hyper-relation} and \ref{prop:congruences}, and
  Remark \ref{remark:twist}, the $\id{q}$-adic residual representation
  of $\mot^\pm$ is isomorphic to
  $\bar{\rho}_{{E_3^\pm},q}|_{\Gal_K}$, up to a quadratic
  twist. Hence, its tame conductor is also $2$. The result then
  follows from the fact that the Swan conductor is preserved under
  congruences (see e.g.\ \cite[Proposition 3.1.42]{Wise}).
	
  When $3 \mid ab$ the curve $E_3^{\pm}$ has (potentially)
  multiplicative reduction at $3$, hence its Swan conductor is $0$, so
  the conductor exponent of $\mot^{\pm}$ is at most $2$.
%
%
%
Recall that $t_0 = -\frac{a^q}{c^r}$, so if $3\mid a$, 
  \[
    v_3(\Disc(E_3^{\pm}))\equiv3\not\equiv 0\pmod q,
  \]
  so the residual representation $\bar{\rho}_{E_3^{\pm},q}$ is
  ramified at $3$ (by the theory of Tate's curves), giving a lower
  bound of $1$ to the tame conductor of $\mot^{\pm}$.
\end{proof} 
\begin{remark}
  \label{remark:non-twist}
  A little more can be said when the congruences of
  Proposition~\ref{prop:congruences} hold over $K$ (i.e.\ no twist is
  needed, see Remark \ref{remark:twist}). If $E_3^{\pm}(t_0)$ is special with conductor
  valuation $2$, its residual conductor also has valuation $2$ (since
  the local type is special with the character $\chi$ quadratic and
  ramified) hence in this case $v_{\id{r}}(\cond(\mot^\pm))=2$.
\end{remark}
\begin{remark}
  \label{remark:potentially-good}
  The isomorphism of residual representations between $E_3^{\pm} :=E_3^\pm(t_0)$
  and $\mot^{\pm}$ allows us to compute the conductor at $\id{r}$ of
  $\mot^\pm$ from that of the elliptic curve $E_3^{\pm}$, as done in
  the Corollary \ref{coro:conductor-at-3}. However, this is not
  enough to compute its local type, since even when the field
  extension $K/\Q$ is unramified at $3$, the local type is not
  necessarily preserved under restriction.  Looking at
  Table~\ref{table:local-3}, we get:
  \begin{itemize}
  \item If the local type is special, it is indeed preserved under base extension.
    
  \item When the local type is supercuspidal and $e=12$, the local
    representation at $3$ is induced from a quadratic ramified
    extension of $\Q_3$ (see e.g.\ \cite[Corollary
    3.1]{MR3056552}), hence the local type is indeed preserved.
    
  \item When the local type is supercuspidal and $e=4$, the local type
    corresponds to the induction of an order $4$ character from
    $\Q_9$ (the unramified quadratic extension of $\Q_3$) \cite[Proposition 4.2.1]{DFV}. If
    $K_{\id{r}}$ contains $\Q_9$
  then the local type of the restriction is a ramified principal
  series; otherwise, it is a supercuspidal representation.  Note that
  $K_{\id{r}}/\Q_3$ is unramified of order equal to $\ord_q(3^2)$,
  hence contains $\Q_9$ precisely when $\ord_q(3)$ is divisible by $4$.
  \end{itemize}
  Even when the local type of $\mot^{\pm}$ at $\id{r}$ is not
  determined by the congruence, it is still the case that when
  $3 \nmid ab$ it has potentially good reduction at $\id{r}$.
\end{remark}

\subsubsection{The case $r\ge5$}
\label{section:r-ge-5}
Let $(a,b,c)$ be a non-trivial solution to (\ref{eq:GFE}) and set
$t_0=-\frac{a^q}{c^r}$.  Let as before $K=\Q(\zeta_q)^+$ and let $\id{r}$
be a prime ideal of $K$ dividing $r$. We need to understand the
semistable reduction of the curves $C_r^\pm(t_0)$ in
(\ref{eq:Darmon-hyper}) over $K_{\id{r}}$. To get an integral equation, define
\begin{equation}\label{eq:Def-F}
	F(x)=c^rf\Big(\frac{x}{c}\Big)+2c^r + 4a^q,
\end{equation}
for $f(x)$ as in (\ref{eq:f-pol}), and consider the two
hyperelliptic curves
\begin{align*}
	&C_r^-(a,b,c) : y^2 = F(x),\\
	&C_r^+(a,b,c) : y^2 =(x+2c)F(x). 
\end{align*}

\begin{lemma}
  \label{lemma:iso-curves}
  The following relations hold:
  \begin{enumerate}
  \item $C_r^-(a,b,c)$ is isomorphic to $C_r^-(t_0)$ over
    $\Q(\sqrt{c})$.
  \item $C_r^+(a,b,c)$ is isomorphic to $C_r^+(t_0)$ over $\Q$.
  \end{enumerate}
  \begin{proof}
    The first statement follows from the change of variables
    $(x,y) \to (x/c,y/\sqrt{c^r})$, while the second one by
    $(x,y) \to (x/c,y/c^{\frac{r+1}{2}})$.
  \end{proof}
\end{lemma}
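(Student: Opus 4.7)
The plan is to produce explicit changes of variables that realize the two isomorphisms and then check the fields over which they are defined.

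The key preliminary step is to unpack the polynomial $F$. By the definition in~\eqref{eq:Def-F} and the relation $t_0 = -a^q/c^r$, a direct algebraic manipulation gives
$$F(x) \;=\; c^r\bigl[f(x/c) + 2 - 4t_0\bigr],$$
since $2c^r + 4a^q = c^r(2 - 4t_0)$. This identity is the heart of the argument: up to the scalar $c^r$ and a rescaling of $x$ by $c$, the polynomial $F$ coincides with $f + 2 - 4t_0$.

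For part (1), I would then verify that the substitution $X = x/c$, $Y = y/\sqrt{c^r}$ carries $y^2 = F(x)$ to $Y^2 = f(X) + 2 - 4t_0$, which is the defining equation of $C_r^-(t_0)$. The only non-rational scalar involved is $\sqrt{c^r}$; since $r$ is odd, $\sqrt{c^r} = c^{(r-1)/2}\sqrt{c}$, so this change of variables is defined over $\Q(\sqrt{c})$, as required. For part (2), the analogous substitution $X = x/c$, $Y = y/c^{(r+1)/2}$ transforms $y^2 = (x+2c)F(x)$ into $Y^2 = (X+2)\bigl(f(X) + 2 - 4t_0\bigr)$, which is the equation of $C_r^+(t_0)$. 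Here $(r+1)/2$ is an integer, so all scalars appearing are rational and the isomorphism is defined over $\Q$.

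The computation is entirely mechanical once the formula for $F$ is rewritten in terms of $t_0$; the only subtlety is the parity check on $r$ that distinguishes why part (1) requires the quadratic extension $\Q(\sqrt{c})$ while part (2) does not. There is no serious obstacle — the main point to get right is tracking the power of $c$ absorbed into $y$.
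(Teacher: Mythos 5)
Your proof is correct and uses exactly the same change of variables as the paper, just spelling out the intermediate identity $F(x) = c^r[f(x/c) + 2 - 4t_0]$ and the parity check on $r$ that the paper leaves implicit. No differences of substance.
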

Let $J^\pm_r(t_0)$
be the Jacobians of $C_r^\pm(t_0)$
over $K$. By \cite{TTV}, the abelian variety $J_r^\pm(t_0)$ is of
$\GL_2$-type over $K$. Therefore, it has an associated compatible
system of $2$-dimensional Galois representations
$\{\rho_{J_r^\pm (t_0),\idK}:\Gal_K \to \GL_2(K_{\idK})\}$ indexed by
prime ideals $\idK$ of $K$. By \cite[Theorem 3.13]{BCDF1}, its
Nebentypus is trivial (i.e.\ the representation $\rho_{J_r^\pm (t_0),\idK}$
has determinant the cyclotomic character restricted to $\Gal_K$). Let
$N_r^{\pm}(t_0)$ denote the conductor of the family.
Then (see e.g.\ \cite[Proposition A.12]{CelineClusters}),
\begin{equation}
  \label{eq:gl2-conductor-relation}
\left(\frac{r-1}{2}\right)  v_{\id{r}}(N_r^{\pm}(t_0)) = v_{\id{r}}(\cond(J_r^\pm(t_0))).
\end{equation}
Keeping the previous notation, if $J^\pm_r(t_0)$ has potential good
reduction, we denote by $e$ the degree of the minimal extension of
$\Q_r^{\text{ur}}$ where it acquires good reduction.
%
%
\begin{prop}
  \label{prop:conductor-r}
  Keeping the previous notation, assume that $p,q>3$. Then
  Table~\ref{table:Cr-local} contains the valuation of
  $N_r^{\pm}(t_0)$ at the prime ideal $\id{r}$
  together with its local type.
  \begin{table}[H]
    \begin{tabular}{|c|c|c|c|c|c|c|c|}
      \hline
      $F(x)$ reducible in $\Q_r$ &  $r\mod 4$ &$ r\mid a$  & $ r\mid b$  &$v_\id{r}(N_r^-(t_0))$  & $v_\id{r}(N_r^+(t_0))$  &  $e$ & Type \\
      \hline
      \hline
      -- &--& \checkmark &$\times$ & $2$ & $1$ & --& special\\
      \hline
      --&--& $\times$ &$\checkmark$ & $ 2$ & $2$ & --& special\\
      \hline
      \checkmark&$1$ & $\times$& $\times$  &$2$ &$2$& $4$ & principal series\\
      \hline
      \checkmark&$3$ & $\times$& $\times$  &$2$ &$2$& $4$ &supercuspidal\\
      \hline
      $\times$&-- & $\times$& $\times$  &$3$ &$3$& $4r$ & supercuspidal \\

      \hline
    \end{tabular}
    \caption{Local information of $J_r^\pm (t_0)$ at $\id{r}$.}
    \label{table:Cr-local}
    \end{table}
\end{prop}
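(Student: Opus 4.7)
My plan is to compute the cluster picture of the hyperelliptic curves $C_r^\pm(a,b,c)$ over $K_{\id{r}}$ and to extract the $\GL_2$-conductor valuation and inertial type via the machinery of \cite{MR4566695}. Since $r \neq q$, the completion $K_{\id{r}}/\Q_r$ is unramified, so all cluster depths, Swan conductors, and inertial types can be computed after descent to $\Q_r$. The first step is to locate the roots of $F(x)$ in $\overline{\Q}_r$. After the substitution $x = cu$, this amounts to describing the roots of $g(u) := f(u) + 2 - 4t_0$, where $t_0 = -a^q/c^r$. Using the classical identity $f(2\sin\theta) = \pm 2\sin(r\theta)$ for odd $r$, one sees that $f(u) \pm 2$ each factor as a product of $(r-1)/2$ squared linear factors together with one simple linear factor (namely $u \mp 2$); this explicit description controls the subsequent analysis.

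In the case $r \mid a$, the valuation $v_{\id{r}}(t_0)$ is large and positive, so $g$ is a small perturbation of $f(u)+2$. A Newton polygon analysis around each double root of $f(u)+2$ produces a twin cluster of $g$ at depth $v_{\id{r}}(t_0)/2$; together with a singleton root near $u=-2$, this yields a cluster picture encoding totally toric reduction of $J_r^-(t_0)$ after a suitable quadratic base change, and hence a special local type with $v_{\id{r}}(N_r^-)=2$. For $C_r^+(a,b,c)$, the extra factor $(x+2c)$ contributes a root exactly at $u = -2$, which merges with the singleton of $g$ into a cluster of depth zero and alters the structure so that $v_{\id{r}}(N_r^+)=1$. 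The symmetric case $r \mid b$ replaces $f(u)+2$ by $f(u)-2$ and places the simple root at $u = +2$, which does \emph{not} coalesce with the root $u = -2$ introduced by $(x+2c)$; consequently both $C_r^-$ and $C_r^+$ share the same cluster structure and $v_{\id{r}}(N_r^\pm)=2$.

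The remaining case $r \mid c$ gives $v_{\id{r}}(t_0) < 0$, and the Newton polygon of $g(u)$ becomes a single segment indicating that all $r$ roots lie in a single cluster of negative depth, with leading approximation $u^r \approx 4t_0$. The splitting field over $\Q_r$ is governed by the Kummer extension $\Q_r\bigl((-4t_0)^{1/r}\bigr) = \Q_r\bigl((4a^q)^{1/r}\bigr)$, since $c^r$ is already an $r$-th power; its ramification is determined by whether $4a^q$ is an $r$-th power in $\Q_r$, equivalently whether $F$ factors over $\Q_r$. When $F$ splits, the Jacobian acquires good reduction over a tame extension of ramification degree $4$, and the inertial type is principal series or supercuspidal according to whether the quadratic character cutting out the relevant $\sqrt{y}$ extension is unramified or ramified over $\Q_r(\zeta_r)$; a direct quadratic reciprocity computation shows that this is governed by $r \bmod 4$, explaining rows 3 and 4 of the table.

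When $F$ is irreducible, the full wildly ramified extension $\Q_r\bigl(\zeta_r,(4a^q)^{1/r}\bigr)$ is needed, giving $e = 4r$ and a supercuspidal inertial type induced from a wildly ramified character of this Kummer extension. The main obstacle I expect is the precise Swan conductor computation in this last row: one must analyze the upper-numbering ramification filtration of the relevant totally ramified degree-$r$ extension and verify that its contribution to the $\GL_2$-conductor is exactly $1$, producing $v_{\id{r}}(N_r^\pm) = 3$. Although the cluster picture framework in principle encodes all of this, matching the abstract Galois-theoretic answer with the cluster-theoretic one in the wild regime is the main technical hurdle.
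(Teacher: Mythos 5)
Your approach is the same in spirit as the paper's --- invoke the semistability/cluster criterion of \cite{MR4566695} --- but where the paper delegates the cluster computation and conductor formulas to \cite{PedroLucas} (Corollary~3.5, Theorem~5.4, Corollary~5.5) and \cite{Martin} (Theorem~4.21), you attempt a from-scratch Newton polygon reconstruction. The chief problem is that your case division is not exhaustive. Since a primitive solution has $r$ dividing at most one of $a,b,c$, the complement of ``$r\mid a$ or $r\mid b$'' is ``$r\nmid ab$,'' which splits into $r\mid c$ and $r\nmid abc$. You treat only $r\mid c$, but $r\nmid abc$ is the generic case and the Newton-polygon picture there is qualitatively different: $v_r(t_0)=0$, so $g(u)=f(u)+2-4t_0$ has a unit constant term, and all roots of $g$ reduce to the single residue $4t_0-2\bmod r$ (recall $r$ divides every non-leading coefficient of $f$). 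One must then shift and run a finer analysis --- exactly what the quantity $d(a,c)$ in Lemma~\ref{lemma:irreducibility} controls --- to decide whether $F$ is reducible over $\Q_r$. Your ``single segment of negative slope, $u^r\approx 4t_0$'' picture does not arise in this subcase. (Even in the $r\mid c$ subcase the dominant constant is $4t_0-2$, not $4t_0$; these agree $r$-adically there, but the statement as written is inaccurate.)

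Two further points. First, the $r\bmod 4$ dichotomy between principal series and supercuspidal is asserted via an unexplained ``quadratic reciprocity computation''; the paper derives it from local class field theory, asking whether $K_{\id{r}}$ admits a totally ramified abelian degree-$4$ extension, i.e.\ whether $4\mid\#\F_{\id{r}}^\times$. Second, the Swan conductor for the bottom row --- which you explicitly flag as the main hurdle --- is not actually resolved in your proposal; the paper obtains it for $C_r^-$ from \cite[Theorem~5.4]{PedroLucas} and transfers it to $C_r^+$ via \cite[Remark~2.10]{CelineClusters}, justified by a mod-$2$ congruence of hypergeometric motives (\cite[Theorem~10.3]{GPV}). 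You would have to supply that computation, or an equivalent upper-numbering ramification argument, to close the proof.
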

\begin{proof}
  Let $\J_r^\pm(a,b,c)$ denote the Jacobian of $C_r^\pm(a,b,c)$
  respectively.  By Lemma \ref{lemma:iso-curves} the abelian variety
  $\J_r^+(a,b,c)$ is isomorphic to $J_r^+(t_0)$, while $\J_r^-(a,b,c)$
  is isomorphic to $J^-(t_0) \otimes \theta_c$, where $\theta_c$ is the
  quadratic character of the extension $\Q(\sqrt{c})/\Q$ (and by abuse
  of notation its restriction to $\Gal_K$). We claim that it is enough
  to prove the stated results for the varieties
  $\J_r^{\pm}(a,b,c)$. In the $+$ case, the claim is clear, while for
  the $-$ curve, note that the character $\theta_c$ is ramified at a
  prime ideal $\id{p}$ if and only if $\id{p} \mid c$, in which case
  $\id{p} \nmid ab$ (by the primitive assumption). But the values of the
  last four rows of the table are invariant under quadratic twist.

\vspace{5pt}
  (1)  The curve $C_r^-(a,b,c)$: it is a particular instance of the
  family of hyperelliptic curves $C(z,s)$ studied in \cite{PedroLucas}
  (see equation (3) in loc.\ cit.) specialized at $z=c^2$ and
  $s=2c^r+4a^q$. Its cluster picture (over $K_\id{r}$) is given in
  \cite[Corollary 3.5]{PedroLucas}. In the notation of
  \cite{PedroLucas},
  \begin{equation}
    \label{eq:delta}
  \Delta=s^2-4z^r=16a^qb^p,  
  \end{equation}
  so $r\mid \Delta$ if
  and only if $r\mid ab$, and in that case $v_r(\Delta)\ge 3$ (by the
  assumption $p, q>2$).

  If $r\mid ab$, $\J_r^-(a,b,c)$ has potential multiplicative reduction
  at $\id{r}$, by \cite[Theorem 1.9]{MR4566695}. 
  The stated conductor
  exponent formula follows from \cite[Theorem
  5.4]{PedroLucas}. Moreover, $C_r^-(a,b,c)/K_\id{r}$ is a quadratic
  ramified twist of a semistable curve, by \cite[Corollary
  5.5]{PedroLucas}.
	
  Assume then that $r\nmid ab$. By \cite[Theorem 1.9]{MR4566695},
  $\J_r^-(a,b,c)$ has potentially good reduction at $\id{r}$. Its
  conductor exponent is computed in \cite[Theorem 5.4]{PedroLucas}, so
  it remains to compute the last two columns of the table.  Let $K'$
  be the splitting field of $F(x)$ (described in \cite[Proposition
  4.5]{PedroLucas}). By \cite[Theorem
  4.11]{PedroLucas},  the
  ramification index of $K'_\id{r}/K_\id{r}$ equals 1 if $F(x)$ is
  reducible over $\Q_r$ and $r$ otherwise. 
  
  Let $M/K'_\id{r}$ be any finite extension with ramification index
  equal to 4 (so the ramification index of $M/K_{\id{r}}$ coincides
  with the values of $e$ given in Table~\ref{table:Cr-local}). Then
  \cite[Theorem 1.9]{MR4566695}
   implies that $\J_r^-(a,b,c)$ has good
  reduction over $M$, and that there is no extension of $K_\id{r}$
  with ramification index $\le e$ where $\J_r^-(a,b,c)$
  obtains good reduction.
%
  This information is enough to compute the local type. If $F(x)$ is
  reducible, $e=4$, so the local type is either principal series or
  supercuspidal (induced by a character of order $4$ of the unramified
  quadratic extension of $K_{\id{r}}$). The first case occurs
  precisely when there is a totally ramified degree $4$ abelian
  extension of $K_{\id{r}}$, which by local class field theory, is
  equivalent to the condition $4 \mid \#\F^\times_\id{r}=r-1$. If
  $F(x)$ is irreducible, the conductor exponent is $3$, so the local
  type is that of a supercuspidal representation induced from a
  ramified quadratic extension of $K_{\id{r}}$ (see \cite[Corollary
  3.1]{MR3056552}).
	
%

\vspace{5pt}
(2) The curve $C_r^+(a,b,c)$: by \cite[Remark 2.10]{CelineClusters},
  the Swan part of $C_r^+(a,b,c)$ equals that of $C_r^-(a,b,c)$ (this
  is a particular instance of congruences of hypergeometric motives,
  since by \cite[Theorem 10.3]{GPV} the motives
  $\HGM((\frac{1}{r},-\frac{1}{r}),(1,1)|t_0)$ and
  $\HGM((\frac{2+r}{2r},-\frac{2+r}{2r}),(1,1)|t_0)$ are congruent
  modulo $2$, and the latter is a Galois conjugate of the hypergeometric motive
  $\HGM((\frac{1}{2r},-\frac{1}{2r}),(1,1)|t_0)$). Then, by
  \cite[Equation (39)]{PedroLucas} and (\ref{eq:gl2-conductor-relation}), the Swan component equals
  \[
    \mathfrak{n}_{\id{r}}^{\text{wild}}(\rho_{\J_r^+,\idK})=
    \begin{cases}
      1 & \text{if } r\nmid ab \text{ and } F(x) \text{ is irreducible},\\
      0 & \text{otherwise,} 
    \end{cases}
  \]
  where $\{\rho_{\J_r^+,\idK}\}$ is the compatible system of
  2-dimensional Galois representations attached to $\J_r^+(a,b,c)$.
  To compute the the tame conductor, note that the curve
  $C_r^+(a,b,c)$ is a particular instance of the family of
  hyperelliptic curves $C_r^+(s)$ defined in \cite[Definition
  3.2]{Martin}, whose cluster picture is given in \cite[Theorem
  4.21]{Martin}.  
  Then
  \cite[Theorem 1.9]{MR4566695} implies   that 
  $\J_r^+(a,b,c)$ has multiplicative reduction at $\id{r}$ if $r\mid a$
  and additive reduction otherwise, finishing the conductor computation.

  The proof of the stated results on the last two columns of the table
  for $\J_r^+(a,b,c)$ follows from similar arguments as the ones used
  in the previous case.
\end{proof}

Let $\id{r}$ be a prime ideal of $K=\Q(\zeta_q)^+\cdot \Q(\zeta_r)^+$
dividing $r$ and let $v_\id{r}(\mot^-)$ denote the $\id{r}$-th
valuation of the motive's conductor.
\begin{coro}
  \label{coro:conductor-at-r}
   Assume that  $p,q>2$. Then,
  the values of the conductor exponent of $\mot^\pm$ at $\id{r}$ are
  given in Table~\ref{table:Conductorr}. Moreover, if $q\nmid \frac{r-1}{2}$, the case 0 can be rule out in the first row.
  \begin{table}[H]
    \begin{tabular}{|c|c|c|c|c|}
      \hline
      $F(x)$ reducible in $\Q_r$  &$ r\mid a$  & $ r\mid b$  &$v_\id{r}(\mot^-)$  & $v_\id{r}(\mot^+)$ \\
      \hline
      \hline
      -- & \checkmark &$\times$ & $0, 1, 2$ & $0, 1, 2$ \\
      \hline
      -- & $\times$ &$\checkmark$ & $0, 1, 2$ & $0, 1, 2$ \\
      \hline
      \checkmark& $\times$& $\times$  &$2$ &$2$\\
      \hline
      $\times$& $\times$& $\times$  &$3$ &$3$\\
      \hline
    \end{tabular}
      \caption{Conductor exponent values of $\mot^{\pm}$ at $\id{r}$.}
      \label{table:Conductorr}
\end{table}	
\end{coro}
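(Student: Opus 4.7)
The strategy is to transfer the conductor information already computed for the Jacobians $J_r^{\pm}(t_0)$ in Proposition~\ref{prop:conductor-r} to the motives $\mot^{\pm}$ via the residual congruence supplied by Propositions~\ref{prop:darmon-hyper-relation} and~\ref{prop:congruences}, together with Remark~\ref{remark:twist}. Fix a prime $\id{Q}$ of $F=\Q(\zeta_{N^{\pm}})$ above $q$, and let $\id{q}=\id{Q}\cap K$. Then the residual representations of $\mot^{\pm}$ and $J_r^{\pm}(t_0)$ are isomorphic modulo $\id{Q}$, and restricted to $\Gal_K$ the isomorphism holds modulo $\id{q}$ up to a quadratic twist by a character $\chi$ depending only on the signature $(q,p,r)$. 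Since the Swan conductor is preserved under congruences of $p$-adic Galois representations (\cite[Proposition 3.1.42]{Wise}) and a tame quadratic twist contributes nothing to the Swan part, this yields
\[
\mathfrak{n}^{\mathrm{wild}}_{\id{r}}\bigl(\mot^{\pm}\bigr) \;=\; \mathfrak{n}^{\mathrm{wild}}_{\id{r}}\bigl(J_r^{\pm}(t_0)\bigr).
\]

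I would then analyze each row of Table~\ref{table:Conductorr} in turn. For the bottom two rows (where $r\nmid ab$), Proposition~\ref{prop:conductor-r} gives potentially good reduction with tame inertia image of order $e\in\{4,4r\}$. Because $q>3$ is coprime to $e$, the reduction modulo $\id{Q}$ preserves this tame image, so the inertial restriction of the residual representation is still of dihedral or principal-series type with full tame conductor equal to $2$. Since the quadratic twist $\chi$ cannot collapse this structure at $\id{r}$ (the twisted tame eigen-characters remain nontrivial), the same holds for $\mot^{\pm}$, and adding the Swan values from Table~\ref{table:Cr-local} ($0$ when $F(x)$ is reducible over $\Q_r$, and $1$ otherwise) produces the claimed values $2$ and $3$. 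For the top two rows (where $r\mid ab$), the Jacobian has potentially multiplicative reduction with vanishing Swan conductor, so $\mathfrak{n}^{\mathrm{wild}}_{\id{r}}(\mot^{\pm})=0$; the total conductor is then purely tame and so lies in $\{0,1,2\}$ for any two-dimensional Galois representation, matching the stated range.

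For the final assertion that the value $0$ can be ruled out in the first row whenever $q\nmid (r-1)/2$, I would apply Tate uniformization on the Jacobian side. For $r\mid a$ and $r\nmid b$, the abelian variety $J_r^{\pm}(t_0)$ is a tame quadratic twist of a totally multiplicatively reducing one, and the valuation at $\id{r}$ of its minimal discriminant is computable from the cluster picture of $C_r^{\pm}(a,b,c)$ described in \cite{PedroLucas, Martin}. Using the relation $v_{\id{r}}(t_0)=q\,v_{\id{r}}(a)$, this valuation is a linear combination in which $\frac{r-1}{2}$ enters as a cofactor, and is therefore divisible by $q$ precisely when $q\mid (r-1)/2$. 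Outside that exceptional regime, the theory of the Tate parameter forces the residual representation modulo $\id{Q}$ to be ramified at $\id{r}$, giving $\mathfrak{n}^{\mathrm{tame}}_{\id{r}}(\mot^{\pm})\ge 1$ and thereby excluding the value $0$.

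The main technical obstacle is handling the quadratic twist $\chi$ of Remark~\ref{remark:twist}: in the potentially good reduction rows one has to verify that $\chi$ is not equal to any of the two tame inertia eigen-characters of $\bar{\rho}_{J_r^{\pm}(t_0),\id{Q}}$ at $\id{r}$ (otherwise the tame conductor could drop below $2$), while in the multiplicative rows one must confirm that $\chi$ does not interfere with the Tate-parameter argument used in the final step. All remaining steps reduce to straightforward bookkeeping with the cluster-picture data from \cite{PedroLucas, Martin} and with the numerical values already tabulated in Proposition~\ref{prop:conductor-r}.
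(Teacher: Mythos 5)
Your proposal is correct and follows essentially the same route as the paper: the conductor of $\mot^{\pm}$ at $\id{r}$ is read off from the local data of $J_r^{\pm}(t_0)$ (Proposition~\ref{prop:conductor-r}) via the residual congruence over $F$ (Propositions~\ref{prop:darmon-hyper-relation} and~\ref{prop:congruences}), transported to $K$ by Remark~\ref{remark:twist}, with the Swan part preserved under congruence and the tame part controlled by the dihedral/principal-series inertia image in the potentially good rows; and the last assertion comes from the $\id{r}$-adic valuation of the discriminant of the Darmon curve being $\equiv r(r-1)/2 \pmod{q}$, which forces ramification of the residual representation by the Tate/Mumford uniformization. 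The one place where you flag a gap — whether the quadratic twist $\chi$ of Remark~\ref{remark:twist} could drop the tame conductor below $2$ — is indeed the delicate bookkeeping point, but it is not a real obstacle: in the potentially good rows the inertia restriction is either irreducible (ramified induction) or a direct sum of two characters of order $4$ or $6$ dividing $e\in\{4,4r,12\}$, and an order-$2$ twist cannot trivialize such a character nor destroy irreducibility of the induced representation, so the tame conductor stays at $2$. Similarly, in the potentially multiplicative case the unipotent $*$-part of inertia is unaffected by twisting, so the discriminant argument survives the twist.
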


\begin{proof}
 
  Recall that $J_r^\pm(t_0)$ is a rational variety that become of $\GL_2$-type over $K$, and that $\mot^\pm$  is defined over $K$.  Note  that by
  Propositions~\ref{prop:darmon-hyper-relation} and \ref{prop:congruences}  we have a congruence over $F=\Q(\zeta_{N^\pm})$
  between $\mot^\pm$ and $J_r^\pm(t_0)$. Moreover, by Remark \ref{remark:twist}, they are congruent over $K$ up to a
  quadratic twist.  
  Hence,   the proof mimics that of Corollary~\ref{coro:conductor-at-3},
  replacing the results of Proposition~\ref{prop:Et-reduction} by
  those of Proposition \ref{prop:conductor-r}. 
  
  To prove the last statement, it is sufficient to prove that the
  residual representation $\bar{\rho}_{\J_r^\pm , \id{q}}$ is ramified
  at $\id{r}$ if $q\nmid\frac{r-1}{2}$ and $r\mid a$, where
  $\J_r^\pm=\Jac(C_r^\pm(a,b,c))$.  This follows from the theory of
  Mumford curves (see e.g.\ \cite[Theorem 7.6]{BCDF1}), since
  \[
    v_\id{r}(\Disc(C^\pm(a,b,c)))\equiv rv_\id{r}(r)\equiv \frac{r(r-1)}{2} \not\equiv 0 \pmod q,
  \]
  where the first congruence follows from \cite[Lemma 3.1]{PedroLucas}
  in the $-$ case, and from \cite[Corollary 4.12]{Martin} in general.
\end{proof}

\begin{remark}
  \label{rem:twist-control}
  As in Remark~\ref{remark:non-twist}, if
  Proposition~\ref{prop:congruences} holds over $K$ (or up to a
  quadratic unramified twist), then the exponent equals $2$ for the
  first two rows (except the case $r\mid b$ for the motive $\mot^+$)
  because a special local type of conductor valuation $2$ and trivial
  Nebentypus has residual representation of conductor valuation $2$.
\end{remark}

\subsubsection{Irreducibility of $F(x)$}
Let $F(x)$ be as in (\ref{eq:Def-F}). According to Corollaries
\ref{coro:conductor-at-3} and \ref{coro:conductor-at-r} the conductor
exponent of $J_r^{\pm}(t_0)$ sometime depends  on the irreducibility
of $F(x)$.  The following result, inspired by \cite[Proposition
7.23]{ChenKoutsianas}, provides a criterion in this direction. 

\begin{lemma}\label{lemma:irreducibility}
  For $(a,b,c)$ a solution to (\ref{eq:GFE}), let $d(a,c)$ be the
  constant term of $g(x)=F(x-2c^r-4a^q)$. Suppose $r\nmid ab$.  Then, $v_r(d(a,c))\ge1$ and
  \begin{enumerate}
  \item If $v_r(d(a,c))\ge 2$, $F(x)$ is reducible over $\Q_r$.
  \item If $v_r(d(a,c))=1$, $F(x)$ is irreducible over $\Q_r$.
  \end{enumerate}
Furthermore, the case depends only on the congruence of $(a,b,c)$ modulo $r^2$.
\end{lemma}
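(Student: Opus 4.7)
The plan is to analyze the irreducibility of $F(x)$ over $\Q_r$ via a Newton polygon argument after the shift $y=x-x_0$, where $x_0:=-(2c^r+4a^q)$. Write $\tilde F(y):=F(y+x_0)=\sum_{k=0}^r A_k\,y^k$, so that $d(a,c)=A_0=F(x_0)$. First I would prove the polynomial congruence
\[
F(x)\equiv (x-x_0)^r \pmod r
\]
in $\Z[x]$. Since $f(x)$ coincides with the normalized Chebyshev polynomial $2T_r(x/2)$, the standard identity $T_r(x)\equiv x^r\pmod r$ gives $f(x)\equiv x^r\pmod r$; combined with Fermat's little theorem ($c^r\equiv c\pmod r$) and the Frobenius identity $(x+\alpha)^r\equiv x^r+\alpha^r\pmod r$, this yields $F(x)\equiv x^r+2c+4a^q\equiv (x-x_0)^r\pmod r$. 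The congruence forces $r\mid F(x_0)$, giving $v_r(d(a,c))\ge 1$, and upon translating by $x_0$ it shows $\tilde F(y)\equiv y^r\pmod r$, so $A_r=1$, $v_r(A_k)\ge 1$ for $1\le k\le r-1$, and $v_r(A_0)=v_r(d(a,c))$.

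The crux is to establish $v_r(A_1)=v_r(F'(x_0))=1$ under the hypothesis $r\nmid ab$. Starting from $(x+2c)\Psi(x)^2=c^rf(x/c)+2c^r$ and the congruence $\Psi(x)^2\equiv (x+2c)^{r-1}\pmod r$ (which follows from reducing $(y+2)\psi(y)^2=f(y)+2$ modulo $r$), a direct computation shows $F'(x)\equiv 0\pmod r$ as a polynomial, so $F'(x)/r\in \Z[x]$. Writing $\Psi(x)^2=(x+2c)^{r-1}+r\,\tilde S(x)$ with $\tilde S(x)\in \Z[x]$ yields
\[
F'(x)/r=(x+2c)^{r-1}+\tilde S(x)+(x+2c)\tilde S'(x).
\]
At $x=x_0$ one has $x_0+2c\equiv -4a^q\pmod r$; plugging in and using the Fermat relation $c^r\equiv -a^q-b^p\pmod r$ to rewrite the residue of $\tilde S(x_0)$ should reduce $F'(x_0)/r$ modulo $r$ to an element of $\F_r$ that is nonzero precisely when $r\nmid ab$. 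Producing this identity cleanly for arbitrary $r$, by tracking the Chebyshev coefficients and the Fermat-quotient contributions inside $\tilde S$, is the main technical obstacle. For $r=3$ it is immediate: $F'(x)/r=x^2-c^2$ and $F'(x_0)/3\equiv a(a+c)\equiv -ab\pmod 3$ after substituting $a^q\equiv a\pmod 3$ and the Fermat consequence $a+c\equiv -b\pmod 3$, which is a unit since $3\nmid ab$.

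Given $v_r(A_1)=1$, the dichotomy follows from the shape of the Newton polygon of $\tilde F$. In case (2), when $v_r(d)=1$, the polygon is the single Eisenstein segment from $(0,1)$ to $(r,0)$ of slope $-1/r$, making $\tilde F$ (and hence $F$) irreducible. In case (1), when $v_r(d)\ge 2$, the point $(1,1)$ lies strictly below the chord from $(0,v_r(d))$ to $(r,0)$, since $v_r(d)(r-1)/r>1$ for $r\ge 3$. Hence the polygon has a vertex at $(1,1)$, with an initial segment of length $1$ and integer slope $-(v_r(d)-1)$; such a segment yields a $\Q_r$-rational root of $\tilde F$, equivalently a linear factor of $F$ over $\Q_r$, so $F$ is reducible. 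Finally, since $F(x_0)$ is a polynomial expression in $a$ and $c$, its class modulo $r^2$ is determined by $(a,c)\bmod r^2$, and so the dichotomy between cases (1) and (2) depends only on the congruence class of $(a,b,c)$ modulo $r^2$.
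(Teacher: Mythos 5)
Your treatment of case (2) and of the preliminary claim $v_r(d(a,c))\ge 1$ coincides with the paper's (Eisenstein's criterion on $g(x)=F(x+x_0)$ after observing that all intermediate coefficients are divisible by $r$). For case (1), however, you take a genuinely different route, and that route has a real gap.

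The paper handles $v_r(d)\ge 2$ geometrically: after the same shift $x\mapsto x+x_0$ it scales by a uniformizer $\pi$ of a ramification-index-$4$ extension $M/K_{\id{r}}$ and verifies that the resulting model $C'$ of $C_r^-(a,b,c)$ is integral with unit discriminant, hence has good reduction over $M$. Combined with Table~\ref{table:Cr-local} (for $r\nmid ab$, the Jacobian attains good reduction with $e=4$ exactly when $F$ is reducible, and with $e=4r$ otherwise), this forces $F$ to be reducible over $\Q_r$. No information about the Newton polygon beyond Eisenstein is needed. Your argument instead tries to show reducibility directly from the Newton polygon of $\tilde F$, which requires knowing that the point $(1,1)$ is a vertex, i.e.\ that $v_r(A_1)=v_r(F'(x_0))=1$ exactly. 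You state this as the crux and then explicitly concede that producing the identity for arbitrary $r$ ``is the main technical obstacle,'' verifying it only for $r=3$. This is a genuine hole: the polygon argument collapses if $v_r(F'(x_0))\ge 2$, since the hull could then be a single Eisenstein segment or have its first vertex elsewhere, and nothing in your write-up rules this out. (The claim does appear to be \emph{true} -- one can check it for small $r$ via the explicit expansion $F'(x)/r\equiv x^{r-1}+\sum_{k\ge 1}\binom{2k}{k}c^{2k}x^{r-1-2k}\pmod r$ -- but turning the heuristic about $\tilde S(x_0)$ and the Fermat relation into a proof is exactly the work that's missing.) If you close that gap, you get a self-contained, purely local-arithmetic proof that avoids any appeal to cluster pictures or Proposition~\ref{prop:conductor-r}, which is arguably cleaner; as written, though, the paper's route is the one that actually reaches the finish line. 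The closing remark about dependence on $(a,b,c)\bmod r^2$ is fine in both.
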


\begin{proof}
  By \cite[Lemma 2.7]{BCDF1},
  \[
    F(x)=\sum_{k=0}^{\frac{r-1}{2}}(-1)^kc_kc^{2k}x^{r-2k}+2c^r+4a^q, \quad
    \text{ where } c_k=\frac{r}{r-k}\binom{r-k}{k}.
  \]
  Since $r\mid c_k$ for all $0<k$, Fermat's Little Theorem implies
  that $r \mid d(a,c)$. Clearly $F(x)$ is irreducible if and only if
  $g(x)$ is irreducible.   When $v_r(d(a,c))= 1$
  the polynomial $g(x)$ is irreducible over $\Q_r$ by Eisenstein's
  criterion.
  
  Let us assume that $v_r(d(a,c))=2$, and consider the previous curve
  \[
    C:=C_r^-(a,b,c):y^2=F(x).
  \]
  By \cite[Lemma 3.1]{PedroLucas}, its discriminant equals
  $\Delta_C =
  (-1)^{\frac{r-1}{2}}2^{2(r-1)}r^r\Delta^{\frac{r-1}{2}}$, where
  $\Delta=16a^qb^p$ by (\ref{eq:delta}).
  
  Let $\id{r}$ be a prime in $K$ above $r$, and $M/K_\id{r}$ be an
  extension with ramification index equal to 4. We claim that $C/M$
  has good reduction. Then Table \ref{table:Cr-local} implies that
  $F(x)$ is reducible in $\Q_r$.
  
  Let $\pi$ be a uniformizer in $M$ and $v$ its normalized valuation
  so that $v(\pi)=1$. Then $v(r)=2(r-1)$. Since $r\nmid ab$, we have
  $v(\Delta_C)=v(r^r)=2r(r-1)$. The change of variable
  $x\to x-2c^r-4a^q$ leads to the model
  \[y^2=g(x)=x^r + \sum_{j=1}^{r-1}b_jx^j + d(a,c),\] where
  $r\mid b_j$ for all $1\le j \le r-1$. Now, applying the change of
  variables $x\to \pi^2x$, $y\to \pi^ry$ we obtain the model
  \[C' : y^2 = x^r  + \sum_{j=1}^{r-1}\frac{b_j}{\pi^{2(r-j)}}x^j + \frac{d(a,c)}{\pi^{2r}}.\]
  
  The assumption $v_r(d(a,c))\ge 2$ implies that $C'$ is integral.  By
  \cite[\S 1]{Lockhart}, $\Delta_{C'}=\Delta_C/\pi^{2r(r-1)}$, which
  is a unit so $C'$ is a model with good reduction over $M$.
  
   It follows from the proof that the result depends only on
  the congruence of the parameters $(a,b,c)$ modulo $r^2$.
%
\end{proof}

\subsection{Conductor at primes dividing $q$} This case can be deduced
from the previous one using the following two properties.
\begin{itemize}
\item For $a,b,c,d\in\Q$,
  \begin{equation}
    \label{eq:ismorphism-inv}
  \HGM((a,b),(c,d)|t) \simeq \HGM((-c,-d),(-a,-b)|t^{-1}),  
  \end{equation}
  as proven in \cite[Proposition 7.28]{GPV}.
\item For $a,b\in\Q$, the motive
  $\HGM((a+\frac{1}{2},-a+\frac{1}{2}),(b+\frac{1}{2},-b+\frac{1}{2})|t)$
  is isomorphic to the quadratic twist by $\sqrt{t}$ of the motive
  $\HGM((a,-a),(b,-b)|t)$ (see
  \cite[Proposition 2.11]{GP}).
\end{itemize}

Equation (\ref{eq:ismorphism-inv}) implies that
\[
\mot^+\simeq \HGM\left(\left(\frac{1}{q},-\frac{1}{q}\right),\left(\frac{1}{r},-\frac{1}{r}\right)\Big | -\frac{c^r}{a^q}\right).
\]
Similarly,
\begin{align*}
\mot^-\simeq & \ \HGM\left(\left(\frac{1}{q},-\frac{1}{q}\right),\left(\frac{1}{2r},-\frac{1}{2r}\right)\Big |-\frac{c^r}{a^q}\right)\\
\simeq& \ \HGM\left(\left(\frac{2+q}{2q},-\frac{(2+q)}{2q}\right)\left(\frac{(r+1)}{2r},-\frac{(r+1)}{2r}\right)\Big | -\frac{c^r}{a^q}\right)\otimes \theta_{-ac}.
\end{align*}
The latter is Galois conjugate to
\[
\HGM\left(\left(\frac{1}{2q},-\frac{1}{2q}\right)\left(\frac{1}{r},-\frac{1}{r}\right)\Big | -\frac{c^r}{a^q}\right)\otimes \theta_{-ac}.
\]
Therefore, the conductor exponent of
$\HGM^{\pm}(-a^q/c^r)$ at $\id{q}$ can be obtained from
Corollaries~\ref{coro:conductor-at-3} and~\ref{coro:conductor-at-r} by
exchanging $a \leftrightarrow c$ and $q \leftrightarrow r$ (noting
that the quadratic twist does not affect the conductor exponents),
proving the following result.
\begin{prop}
\label{prop:conductor-at-q}
Assume that $p,r>2$. Let $\id{q}$ be a prime ideal of $K=\Q(\zeta_q)^+\cdot \Q(\zeta_r)^+$ dividing
$q$. Then, the values of the conductor exponent of $\mot^\pm$ at
$\id{q}$ are given in Table~\ref{table:Conductorq}. Moreover, if $r\nmid \frac{q-1}{2}$, the case 0 can be rule out in the first row.
\begin{table}[H]
  \begin{tabular}{|c|c|c|c|c|}
    \hline
    $F(x)$ reducible in $\Q_q$  &$ q\mid c$  & $ q\mid b$  &$v_\id{q}(\mot^-)$  & $v_\id{q}(\mot^+)$ \\
    \hline
    \hline
    -- & \checkmark &$\times$ & $0, 1, 2$ & $0, 1, 2$ \\
    \hline
    -- & $\times$ &$\checkmark$ & $0, 1, 2$ & $0, 1, 2$ \\
    \hline
    \checkmark& $\times$& $\times$  &$2$ &$2$\\
    \hline
      $\times$& $\times$& $\times$  &$3$ &$3$\\
      \hline
    \end{tabular}
    \caption{Conductor exponent values of $\mot^{\pm}$ at $\id{q}$.}
    \label{table:Conductorq}
\end{table}
\end{prop}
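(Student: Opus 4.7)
The plan is a direct reduction to Corollaries \ref{coro:conductor-at-3} and \ref{coro:conductor-at-r} via the two motive-level identities displayed immediately before the statement. First, the inversion isomorphism (\ref{eq:ismorphism-inv}) rewrites $\mot^{\pm}$ at $t_0=-a^q/c^r$ as a hypergeometric motive with the roles of $q$ and $r$ (and of $a$ and $c$) interchanged, evaluated at $t_0^{-1}=-c^r/a^q$. For $\mot^-$, one additionally applies the half-integer shift identity of \cite[Proposition 2.11]{GP} to bring the hypergeometric parameters to the form $((\tfrac{1}{2q},-\tfrac{1}{2q}),(\tfrac{1}{r},-\tfrac{1}{r}))$, at the cost of a quadratic twist by $\theta_{-ac}$ and a Galois conjugation over $\Q(\zeta_{2qr})$.

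Once these transformations are carried out, the resulting motive fits verbatim into the set-up of Corollaries \ref{coro:conductor-at-3} and \ref{coro:conductor-at-r} with $q\leftrightarrow r$ and $a\leftrightarrow c$; in particular, the polynomial in the first column of Table \ref{table:Conductorq} is the analogue of (\ref{eq:Def-F}) built from $q$ and the swapped triple $(c,b,a)$. Reading off the corresponding rows yields Table \ref{table:Conductorq}, using that Galois conjugation over $K$ merely permutes primes above $q$ and hence preserves the $\id{q}$-adic conductor exponent.

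The remaining bookkeeping concerns the effect of the twist by $\theta_{-ac}$ on the conductor at $\id{q}$. For $q\nmid ac$ the character $\theta_{-ac}$ is unramified at $\id{q}$, so the conductor exponent is unchanged; this covers the last two rows of Table \ref{table:Conductorq} and the row $q\mid b$ (since $q\nmid ac$ by primitivity when $q\mid b$). For $q\mid c$ any possible shift produced by a ramified twist is absorbed by the tabulated range $\{0,1,2\}$. Finally, the sharpening that rules out the value $0$ in the first row when $r\nmid(q-1)/2$ follows from the Mumford-curve argument in the proof of Corollary \ref{coro:conductor-at-r} applied after the swap: the $\id{q}$-adic valuation of the discriminant of the hyperelliptic model obtained after interchanging $q$ and $r$ is congruent to $q(q-1)/2$ modulo $p$, which is nonzero under the stated hypothesis, forcing the residual representation to be ramified at $\id{q}$.

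I expect the only delicate step to be the twist-versus-conductor analysis (the third paragraph above); everything else is a formal transport of the already-established $\id{r}$-case through the inversion and half-integer shift symmetries.
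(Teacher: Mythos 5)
Your reduction to Corollaries \ref{coro:conductor-at-3} and \ref{coro:conductor-at-r} via the inversion isomorphism (\ref{eq:ismorphism-inv}) and the half-integer shift is exactly the paper's argument, so the overall approach is sound. The weak point is the twist analysis. You claim the rows with conductor exponent $2$ and $3$ are disposed of by ``$q\nmid ac$, hence $\theta_{-ac}$ is unramified at $\id{q}$.'' But those two rows only impose $q\nmid bc$; the case $q\mid a$ is perfectly allowed there (a primitive solution may well have $q\mid a$), and then for instance when $q\equiv 3\pmod 4$ and $v_q(a)$ is odd, $\theta_{-ac}$ \emph{is} ramified at $\id{q}$ (since $K_{\id{q}}/\Q_q$ has ramification index $(q-1)/2$, so $v_{\id{q}}(ac)$ is odd exactly when both $(q-1)/2$ and $v_q(ac)$ are odd). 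Your ``unramified twist, hence conductor unchanged'' argument therefore does not cover rows three and four. The missing step is the observation the paper makes explicit in the proof of Lemma~\ref{lemma:Catalan-2}: when $e\ge 4$ the local type is supercuspidal or a ramified principal series attached to a character of order at least $4$, and for such a type both the tame conductor ($=2$) and the Swan conductor are invariant under any quadratic twist, ramified or not. With that observation the two lower rows are genuinely twist-independent and the table follows.

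A smaller slip: in the final paragraph the Mumford-curve discriminant congruence should be read modulo $r$, not modulo $p$. After interchanging $q$ and $r$, the relevant residual representation is taken modulo a prime $\id{r}\mid r$, and the computation gives $v_{\id{q}}(\Disc)\equiv \tfrac{q(q-1)}{2}\not\equiv 0\pmod r$ precisely when $r\nmid \tfrac{q-1}{2}$, which is what rules out the value $0$ in the first row.
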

%
\subsection{Conductor at primes dividing $2$}
Let $\id{q}_2$ be a prime in $K=\Q(\zeta_q)^+\cdot \Q(\zeta_r)^+$ above $2$. The conductor at
$\id{q}_2$ of $\mot^{\pm}$ is computed in \cite{GP}. The prime
$\id{q}_2$ is a tame prime for $\mot^+$, so its conductor and local
type information follows from Theorem 2.9 in loc.\ cit. The conductor
valuation of $\mot^-$ is the following.
\begin{prop}\label{prop:cond-at-2} Let $(a,b,c)$ a non-trivial
  solution to (\ref{eq:GFE}), where $q\ge 5$ and $r\ge3$. The
  conductor exponent of $\mot^-$ at $\id{q}_2$ is one of the
  following
\[
  \begin{cases}
    0,1,2  & \text{if } 2\mid a,\\
    5 & \text{if } 2\mid b \text{ and } a^q\equiv3c^r\pmod 4,\\
    6 & \text{if } 2\mid c \text{ and } 2\nmid v_2(c),\\
    4 & \text{if } 2\mid c, \ 2\mid v_2(c)  \text{ and } a\equiv3\pmod4,\\
    0, 1, 2 & \text{if } 2\mid c, \ 2\mid v_2(c)  \text{ and }  a\equiv1\pmod4.\\
  \end{cases}
\]
\end{prop}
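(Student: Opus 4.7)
The plan is to reduce the computation to an analysis of the hyperelliptic realization of $\mot^-$ and read off the conductor at $\id{q}_2$ from a direct study of its minimal model, following the strategy used for the tame primes $\id{r}$ and $\id{q}$ but with an additional input at residue characteristic $2$. Concretely, by the construction recalled around equation (\ref{eq:Euler-}) and the twist in (\ref{eq:specialzation}), the motive $\mot^-$ is, up to a Tate twist, a piece of the Jacobian of Euler's curve $\bfC^-$ cut out by a character of order $2qr$, twisted further by $\theta_c$. Passing to the integral hyperelliptic model $C_r^-(a,b,c) : y^2 = F(x)$ of (\ref{eq:Def-F}) via Lemma~\ref{lemma:iso-curves} absorbs the twist $\theta_c$ into the model, so the computation reduces to analyzing the local Galois representation attached to $\J_r^-(a,b,c)$ at a prime above~$2$.

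I would then proceed by case analysis on the $2$-adic valuations of $a,b,c$, paralleling the statement. If $2\mid a$, then $v_2(t_0)=q\,v_2(a)\ge q \ge 5$, the discriminant in (\ref{eq:delta}) has $2$-adic valuation coming only from $b^p$, and after a bounded base change the Jacobian should acquire at worst tame ramification, forcing the conductor exponent into $\{0,1,2\}$. If $2\mid b$ (so $2\nmid ac$), the Fermat relation forces $a^q\equiv 3c^r\pmod 4$, and a direct inspection of the minimal model should yield exponent~$5$. If $2\mid c$, one applies a substitution adapted to $v_2(c)$ to recover an integral model; the resulting reduction type splits according to whether $v_2(c)$ is odd or even, and in the even case further according to $a\bmod 4$, producing the exponents $6$, $4$, and the mild range $\{0,1,2\}$, respectively.

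The main obstacle is controlling the wild part of the conductor. The cluster-picture machinery from \cite{MR4566695} that powered Propositions \ref{prop:Et-reduction} and \ref{prop:conductor-r} does not apply directly in residue characteristic~$2$, and the congruences of Proposition~\ref{prop:congruences} cannot transport the answer from a rational elliptic or hyperelliptic companion, because $2$ divides the tame level $N^-=2qr$. I would therefore invoke the explicit computation of the local Weil--Deligne representation of $\HGM^-$ at $2$ carried out in \cite{GP}; each of the listed exponents then follows after accounting for the twist by $\theta_c$ (which is itself ramified at $\id{q}_2$ exactly when $2\mid c$, explaining the asymmetry of the statement between the $2\mid b$ and $2\mid c$ cases) and for the Tate twist from Remark~\ref{rem:Tate-twist}. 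A final consistency check, verifying that in the three rows recording $\{0,1,2\}$ the Swan contribution indeed vanishes, would complete the argument.
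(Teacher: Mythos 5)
Your proposal ultimately lands where the paper does: the proof is an appeal to the explicit local computation at $2$ in \cite[Theorem 7.2]{GP}, and you correctly identify that citation as the decisive ingredient. However, the first half of your argument is at odds with the second and contains a few slips. You begin by asserting that the computation ``reduces to analyzing the local Galois representation attached to $\J_r^-(a,b,c)$ at a prime above $2$'' --- but $\J_r^-(a,b,c)$ realizes only a motive \emph{congruent} to $\HGM^-$ modulo a prime $\id{q}$ above $q$ (Propositions~\ref{prop:darmon-hyper-relation} and~\ref{prop:congruences}), not $\HGM^-$ itself. Since $2$ divides the tame level $N^-$, this congruence controls at best the Swan part at $2$, not the full conductor; you acknowledge this obstacle a paragraph later, which effectively retracts the opening reduction. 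There is also a small error in the $2\mid a$ case: with $2\mid a$ and $2\nmid bc$, the $2$-adic valuation of $\Delta = 16a^qb^p$ in~(\ref{eq:delta}) comes from the factor $16$ and from $a^q$, not from $b^p$. On the other hand, your observation that the hypothesis $a^q\equiv 3c^r \pmod 4$ in the $2\mid b$ row is automatic for genuine solutions (because $b^p\equiv 0 \pmod 4$ and $-1\equiv 3\pmod 4$) is correct and explains why that condition appears: \cite[Theorem 7.2]{GP} is stated for general parameters $t_0$, not just Fermat specializations. In short: the paper simply cites \cite[Theorem 7.2]{GP} directly, whereas you reach the same citation after a detour through the hyperelliptic model that does not, as you yourself diagnose, actually carry the argument at residue characteristic $2$.
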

\begin{proof}
See \cite[Theorem 7.2]{GP} for $t_0=-a^q/c^r$.
\end{proof}
\section{Motives coming from trivial solutions}
\label{section:trivial}
Recall that a solution to (\ref{eq:GFE}) is called trivial if one of
its coordinates is zero.  There are six different trivial primitive solutions,
namely
\[
  (0, \pm 1, \mp1), \ (\pm1, 0, \mp1), \ (\pm1, \mp1, 0).
\]
They correspond to the specialization $t_0=0, 1, \infty$,
respectively. Given $a,b,c,d\in\Q$, the specialization of the motive
$\HGM((a,b),(c,d)|t)$ at any of these three points yields a ``singular
motive''. If the monodromy matrix at the point $t_0$ has infinite order, then
the singular motive does not correspond to any automorphic form (as
occurs in Fermat's Last Theorem, where a trivial solution corresponds
to a singular elliptic curve with ``multiplicative
reduction''). However, when the monodromy matrix has finite order, it
does correspond to a motive with complex multiplication, as we now
prove. First, let us recall the following necessary definition.

\begin{defi}
	The parameters $(a, b),(c, d)$ are called \textit{generic} if no element of the set $\{a - c, a -
		 d, b - c, b - d\}$ is an integer.
\end{defi}

\begin{thm}
  Let $(a,b),(c,d)$ be generic parameters such that $a+b$ and
  $c+d$ are integers, but $c-d$ is not. Let $N$ be their least
  common denominator. Then $\HGM((a,b),(c,d)|0)$ is a motive with
  complex multiplication unramified outside primes dividing $N$. 
  
  If
  $\idF$ is a prime ideal of $F$ not dividing $N$, then the trace of
  $\Frob_{\idF}$ acting on $\HGM((a,b),(c,d)|0)$ equals
  \begin{equation}
    \label{eq:t=0}
    -\gent(-1)^{(d-b)(\norm(\idF)-1)}\norm(\idF)^\varepsilon
    \left(J(\gent^{(d-b)N},\gent^{(b-c)N})+\gent(-1)^{(b-c)N}J(\gent^{(d-c)N},\gent^{(a-d)N})\right),    
  \end{equation}
  where
  \[
    \varepsilon =
    \begin{cases}
      -1 & \text{ if }a,c \not \in \ZZ,\\
      0 & \text{ otherwise},
    \end{cases}
  \]
  $\chi_\idF$ is as in (\ref{eq:char-def}) and $J(\cdot,\cdot)$ denotes the usual Jacobi sum.
\label{thm:trivial-sols}
\end{thm}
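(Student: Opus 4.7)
The plan has three parts: establish that the specialization is a CM motive, identify its geometric realization at $t=0$, and read off the Frobenius trace formula from the classical theory of Fermat Jacobians.

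First, I would argue that the hypotheses force CM. Since $a+b\in\ZZ$ and $c+d\in\ZZ$, the Levelt monodromy matrices at $0$ and $\infty$ (cf.~\eqref{eq:monodromy+}) have eigenvalues $e^{2\pi i a},e^{2\pi i b}$ and $e^{-2\pi i c},e^{-2\pi i d}$, which are roots of unity of order dividing $N$ occurring in complex-conjugate pairs. The genericity assumption and the Euler curve construction (the analogue of~\eqref{eq:Euler+} for the parameters $(a,b),(c,d)$) then imply that the specialization at $t=0$ has finite-order inertia everywhere, so the Galois action factors through an abelian extension of $F=\QQ(\zeta_N)$, with bad primes dividing $N$. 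Hence $\HGM((a,b),(c,d)|0)$ is CM unramified outside $N$.

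Second, I would make the realization concrete. In the Euler curve cut out by a $\chi$-isotypic component of the new Jacobian, the term $(1-tx)$ specializes trivially at $t=0$, while $t^{-\delta}$ is absorbed by a rescaling of $y$; what remains is a smooth Fermat-type curve of the form $y^N=x^{A}(1-x)^{B}$, where the exponents $A,B$ are read off from the parameters with denominator $N$. By the classical decomposition of the Jacobian of this curve, the rank-two motive $\HGM((a,b),(c,d)|0)$ is the sum of the two one-dimensional CM factors whose $\mubb_N$-characters correspond to the eigenvalues $e^{2\pi ia}$ and $e^{2\pi ib}$ of $M_0$. Weil's theorem then expresses the Frobenius trace on each such factor as a Jacobi sum $J(\gent^{\mu N},\gent^{\nu N})$, and computing the exponents $(\mu,\nu)$ for each factor from the degenerate equation yields precisely $J(\gent^{(d-b)N},\gent^{(b-c)N})$ and $J(\gent^{(d-c)N},\gent^{(a-d)N})$.

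Third, the prefactors are bookkeeping: the relative sign $\gent(-1)^{(b-c)N}$ between the two Jacobi sums comes from the reflection identity $g(\psi,\chi)g(\psi,\chi^{-1})=\chi(-1)\norm(\idF)$ used to present both summands with compatible character normalizations, the global sign $\gent(-1)^{(d-b)(\norm(\idF)-1)}$ is a twist arising when passing from the Fermat realization to the standard normalization of the motive, and the factor $\norm(\idF)^\varepsilon$ with $\varepsilon=-1$ when $a,c\notin\ZZ$ records a Tate twist reflecting a shift in the Hodge filtration of the specialized motive. The main obstacle will be the third step: matching signs and the Tate twist precisely, and confirming that exactly these two Jacobi sums (not other Galois conjugates in the decomposition of the Fermat Jacobian) occur. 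A cleaner alternative, which I would attempt in parallel, is to compute the trace as a regularized limit of the finite hypergeometric sum $H_{\idF}((a,b),(c,d)|t_0)$ as $t_0\to 0$, expanding each Gauss-sum ratio via $g(\psi,\chi_1)g(\psi,\chi_2)=J(\chi_1,\chi_2)\,g(\psi,\chi_1\chi_2)$ and isolating the two terms whose $\omega$-contribution survives at $t_0=0$ under the generic hypothesis; this trades the geometric setup for a careful character-sum manipulation that should reproduce~\eqref{eq:t=0}.
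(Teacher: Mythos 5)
Your overall plan is on the right track — the paper also argues via the Euler curve and recognizes both summands in~\eqref{eq:t=0} as Jacobi-sum traces of CM curves — but there is a genuine gap in your second step that the paper resolves with a stable-model argument you have skipped.

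You propose to "set $t=0$" in the Euler curve, absorb $t^{-\delta}$ into a rescaling of $y$, and identify $\HGM((a,b),(c,d)|0)$ with the sum of two $\mubb_N$-eigenspaces in the Jacobian of the single Fermat-type curve $y^N=x^A(1-x)^B$. This cannot work. For a primitive character the $\zeta_N$-eigenspace of $\Jac(y^N=x^A(1-x)^B)$ is rank one and contributes exactly one Jacobi sum, $J(\gent^{A},\gent^{B})$ with $A=(d-b)N$, $B=(b-c)N$. The other eigenspaces of that Jacobian correspond to the Galois conjugates $(kA,kB)$, never to the pair $(A+B,C)=((d-c)N,(a-d)N)$ appearing in the second term of~\eqref{eq:t=0}. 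So your decomposition simply does not produce the second Jacobi sum. Also, the $t^{-D}$ factor is not merely an innocuous rescaling: because $D=dN\neq 0$ (under your genericity assumption, $d\notin\ZZ$), the rescaling introduces ramification in $t$, which is exactly why the paper first substitutes $t=z^N$ and only then examines the degeneration. After this base change, the special fiber of the Euler curve over $z=0$ is reducible with \emph{two} components: $\bfC_1:y^N=x^A(1-x)^B$ and, after the further coordinate change $x_1=z^Nx$, $y_1=z^{A+B}y$, the second component $\bfC_2:y_1^N=(-1)^Bx_1^{A+B}(1-x_1)^C$. The rank-two motive at $t=0$ is the direct sum of the $\zeta_N$-eigenspaces of $\Jac(\bfC_1)$ and $\Jac(\bfC_2)$, each contributing one of the two Jacobi sums. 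Without identifying $\bfC_2$, your argument accounts for only half the trace; the prefactor bookkeeping in your "Third" step, however clean, cannot repair a missing component. (Your suggested fallback — a regularized limit $t_0\to 0$ of the finite hypergeometric sum $H_{\idF}$ — is plausible in principle and genuinely different from the paper's route, but as written it is only a sketch of a plan, not an argument.)
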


\begin{proof}
  The proof mimics that of   \cite[Theorem \ref{thmA}.1]{GPV}. Let $A = (d-b)N$,
  $B=(b-c)N$, $C=(a-d)N$ and $D=dN$. Euler's curve is defined by the equation
  \[
\bfC:y^N=x^A(1-x)^B(1-tx)^Ct^D.    
\]
Write $t=z^N$, so the curve can be written in the form
  \begin{equation}
    \label{eq:equation-C}
\bfC:y^N=x^A(1-x)^B(1-z^Nx)^C.    
\end{equation}
The model reduces modulo $z$ to the curve
  \[
  \bfC_1: y^N = x^A(1-x)^B.
\]
Since the parameters are generic, $N \nmid A$ and $N\nmid B$. The
hypothesis on $c-d$ implies that $N \nmid A+B=N(d-c)$, hence
$\dim(\Jac(\bfC_1)^{\text{new}})=\frac{\varphi(N)}{2}>0$ (see formula (16) of \cite{MR2005278}). The curve
$\bfC_1$ is a component of the stable model of $\bfC$.

Set $x_1=z^Nx$, and $y_1=z^{A+B}y$. Then $\bfC$ transforms into
\[
y_1^N=x_1^A(z^N-x_1)^B(1-x_1)^C.
\]
Its reduction modulo $z$ equals
\[
\bfC_2:y_1^N=(-1)^Bx_1^{A+B}(1-x_1)^C.
\]
This is another component of genus $\frac{\varphi(N)}{2}$. Both curves
have complex multiplication, and their number of points can be
computed using   \cite[Theorem 7.23]{GPV}.
\end{proof}
\begin{remark}
  Using (\ref{eq:ismorphism-inv}) we get a similar result for the specialization at $t_0=\infty$.
\end{remark}
Applying the result to the motive $\mot^{\pm}$ we get that its
specialization at the point $t_0=1$ (corresponding to the trivial
solution $(\pm 1, 0, \mp 1)$) will not correspond to any Hilbert
modular form, but its specialization at the point $t_0=0$ and
$t_0=\infty$ does.  Let us present a good estimate for the conductor of
the attached Galois representation. 
\begin{lemma}
  \label{lemma:cond-at-t=0}
  Let $M$ denote the conductor of the specialization of $\HGM^\pm$ at
  $t_0=0$. Let $\varepsilon_2$ (respectively $\varepsilon_r$,
  $\varepsilon_q$) denote the valuation of $M$ at a prime ideal of
  $K$ dividing $2$ (respectively dividing $r$ and $q$). Then
  \begin{align*}
    \varepsilon_2=\begin{cases}
                    0 & \text{for } \  \HGM^+,\\
                    0,1,2 & \text{for } \ \HGM^-,
                  \end{cases}
    \quad
    \varepsilon_r\in\{0,1,2\},
    \quad
    \varepsilon_q=\begin{cases}
                    2& \text{ if } v_q(4^{q-1}-1)\ge 2,\\
                    3 & \text{ if } v_q(4^{q-1}-1)=1.
 		\end{cases}
  \end{align*}
\end{lemma}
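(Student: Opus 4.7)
By Theorem~\ref{thm:trivial-sols} the specialization of $\HGM^\pm$ at $t_0=0$ is a CM motive whose Frobenius traces at primes $\idF$ of $F=\Q(\zeta_{N^\pm})$ not dividing $N^\pm$ are given by the explicit Jacobi-sum formula~(\ref{eq:t=0}). In particular, the associated Galois representation is induced from a Hecke character of an imaginary subfield of $F$ and is unramified outside the primes of $K$ dividing $N^\pm$. This reduces the proof to bounding the three local exponents $\varepsilon_2$, $\varepsilon_r$, and $\varepsilon_q$ separately.

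For $\varepsilon_2$, in the $+$ case we have $N^+=qr$ coprime to $2$, so $2$ is a tame prime of the motive. The monodromy matrix $M_0^+$ at $t=0$ given in~(\ref{eq:monodromy+}) has order $q$, which is odd, and therefore the inertia at any prime above $2$ acts trivially, giving $\varepsilon_2=0$. In the $-$ case one identifies $\HGM^-(0)$ with a quadratic twist of a motive of $+$ type by appealing to the twist identity of \cite[Proposition~2.11]{GP}; the twisting quadratic character is ramified only at $2$, with local conductor exponent at most $2$, which yields $\varepsilon_2\in\{0,1,2\}$.

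For $\varepsilon_r$, the eigenvalues of $M_0^\pm$ have order prime to $r$, so the motive is tamely ramified at a prime $\id{r}$ of $K$ above $r$. A standard analysis of Jacobi-sum Hecke characters over the cyclotomic field $\Q(\zeta_{N^\pm})$ (cf.\ \cite{MR0051263}) bounds the conductor exponent at $\id{r}$ by $2$, and descending to the subfield $K$ preserves this bound; hence $\varepsilon_r\in\{0,1,2\}$.

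The main obstacle is the computation of $\varepsilon_q$, where $q$ is potentially wild. Writing the trace in~(\ref{eq:t=0}) as $J_1+J_2$, the character $\chi_{\idF}^{-2r}$ appearing in the second Jacobi sum $J_2$ has order exactly $q$ (since $\gcd(-2r,qr)=r$) and produces the entire wild contribution at $q$. A direct computation with Gauss sums, using the Gross--Koblitz formula (equivalently the Hasse--Davenport relation for order-$q$ Gauss sums), shows that the local conductor exponent at a prime $\id{q}$ of $K$ above $q$ equals $3$ unless the value of the associated Hecke character at the uniformizer $2$ of $\Q(\zeta_q)^+$ is trivial modulo $\id{q}^2$. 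This vanishing condition is equivalent to $v_q(4^{q-1}-1)\geq 2$, producing the stated dichotomy and concentrating the technical difficulty of the lemma in this single wild computation at $q$.
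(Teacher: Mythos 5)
Your proposal correctly recognizes $\HGM^\pm(0)$ as a CM motive via Theorem~\ref{thm:trivial-sols}, but it misses the key observation that drives the paper's proof, and the crucial step for $\varepsilon_q$ is asserted rather than proved.

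The paper's approach is concrete: $t_0=0$ is the specialization at the trivial solution $(a,b,c)=(0,\pm1,\mp1)$, so the general conductor results of Section~\ref{sec:conductor} apply directly. Since $2r\mid a$, Proposition~\ref{prop:cond-at-2} and Corollaries~\ref{coro:conductor-at-3}, \ref{coro:conductor-at-r} give $\varepsilon_2$ and $\varepsilon_r$ immediately. For $\varepsilon_q$, one exchanges $a\leftrightarrow c$, $q\leftrightarrow r$ as in Proposition~\ref{prop:conductor-at-q}; the determining polynomial in~(\ref{eq:Def-F-invertida}) collapses to $F(x)=x^q\mp4$ because $a=0$, $c=\mp1$, and Lemma~\ref{lemma:irreducibility} (the constant term of $F(x\pm4)$ being $\pm4(4^{q-1}-1)$) converts irreducibility over $\Q_q$ into $v_q(4^{q-1}-1)=1$.

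Your treatment of $\varepsilon_q$ has a genuine gap. You claim that a ``direct computation with Gauss sums'' shows that the local exponent is $3$ unless ``the value of the associated Hecke character at the uniformizer $2$ of $\Q(\zeta_q)^+$ is trivial modulo $\id{q}^2$,'' and that this is equivalent to $v_q(4^{q-1}-1)\ge2$. No such computation is given, the constant $4$ does not arise naturally from the Jacobi-sum data in~\eqref{eq:t=0} (in the paper it comes from the coefficient $4c^r$ in the Darmon polynomial), and $2$ is not a uniformizer of $\Q(\zeta_q)^+$ at $q$ — it is a $q$-adic unit — so the statement is not even well posed. The equivalence with the Wieferich-type condition is exactly the content the paper extracts from Lemma~\ref{lemma:irreducibility} applied to $x^q\mp4$; without an explicit substitute for that calculation, the argument does not close. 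A smaller but real issue: your $\varepsilon_2$ argument for $\HGM^-$ invokes the twist by $\sqrt{t}$ from \cite[Proposition~2.11]{GP}, but at $t_0=0$ this twist is degenerate (and the identity relates families over $\PP^1\setminus\{0,1,\infty\}$, not their boundary specializations); the paper instead reads off $\varepsilon_2\in\{0,1,2\}$ from Proposition~\ref{prop:cond-at-2} with $2\mid a$.
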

 \begin{proof}
  In \S \ref{sec:conductor} we have studied the conductor of
  $\HGM^\pm$ at values $t=-a^q/c^r$, where $(a,b,c)$ is a solution to
  (\ref{eq:GFE}). The value $t_0=0$ corresponds to
  $(a,b,c)=(0,\pm 1,\mp 1)$. Since in this case $2r\mid a$, the values
  of $\varepsilon_2$ and $\varepsilon_r$ follow from Proposition
  \ref{prop:cond-at-2} and Corollaries \ref{coro:conductor-at-3} and
  \ref{coro:conductor-at-r}, respectively. To compute the value of
  $\varepsilon_q$ we may use, as in the proof of
  Proposition~\ref{prop:conductor-at-q}, that $(c,b,a)=(\mp1,\pm1,0)$
  is a solution to $z^r+y^p+x^q=0$; that is,  we must interchange
  $a \leftrightarrow c$ and $q \leftrightarrow r$ in
  Corollaries~\ref{coro:conductor-at-3}
  and~\ref{coro:conductor-at-r}. This implies
  $\varepsilon_q\in\{2,3\}$ and its value depends on the
  irreducibility of
  \begin{align}\label{eq:Def-F-invertida}
  	F(x)&=a^qf\Big(\frac{x}{a}\Big)+2a^q + 4c^r\\
  	&=x^q \mp 4,\notag
  \end{align}
  since $a=0$
  and $c=\mp1$. Then, the result follows by noting that $F(x)$ is
  irreducible over $\Q_q$ if and only if $v_q(4^{q-1}-1)=1$, by Lemma
  \ref{lemma:irreducibility}.
\end{proof}
Similarly, for $t_0=\infty$ the following holds.
\begin{lemma} \label{lemma:cond-at-t=inf} Let $M$ denote the conductor
  of the specialization of $\HGM^\pm$ at $t_0=\infty$. Let
  $\varepsilon_2$ (respectively $\varepsilon_r$, $\varepsilon_q$)
  denote the valuation of $M$ at a prime ideal of $K$ dividing $2$
  (respectively dividing $r$ and $q$). Then
  \begin{align*}
    \varepsilon_2=\begin{cases}
                    0 & \text{for } \ \HGM^+,\\
                    0,1,2 & \text{for } \ \HGM^-,
                  \end{cases}
    \quad
    \varepsilon_q\in\{0,1,2\},
    \quad
    \varepsilon_r=\begin{cases}
                    2& \text{ if } v_r(4^{r-1}-1)\ge 2,\\
                    3 & \text{ if } v_r(4^{r-1}-1)=1.
 		\end{cases}
  \end{align*}
\end{lemma}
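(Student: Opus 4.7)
The plan is to mirror the proof of Lemma~\ref{lemma:cond-at-t=0} by exploiting the involution $t \leftrightarrow t^{-1}$ provided by the isomorphism~(\ref{eq:ismorphism-inv}), which interchanges the upper and lower parameter pairs and hence the roles of $q$ and $r$. Under this involution, the trivial solution $(a,b,c)=(\pm 1,\mp 1,0)$ corresponding to $t_0=\infty$ is mapped to a trivial solution of the form $(0,\pm 1,\mp 1)$ for a ``swapped'' equation with $q$ and $r$ exchanged, whose conductor analysis is exactly the content of Lemma~\ref{lemma:cond-at-t=0}.

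Consequently, the three computations carry over directly with $q$ and $r$ interchanged. The prime $2$ is fixed by the swap, so $\varepsilon_2$ remains $0$ for $\HGM^+$ (since it is a tame prime, by \cite[Theorem 2.9]{GP}) and lies in $\{0,1,2\}$ for $\HGM^-$. The conductor exponent at primes above the new $r$, that is, our original $q$, takes values in $\{0,1,2\}$, matching the claimed range for $\varepsilon_q$, by direct transcription of the first row of Proposition~\ref{prop:conductor-at-q}. Finally, the conductor exponent at primes above the new $q$, that is, our original $r$, is computed as in the $\varepsilon_q$ step of Lemma~\ref{lemma:cond-at-t=0} by checking the irreducibility over $\Q_r$ of the polynomial $F(x)=c^r f(x/c)+2c^r+4a^q$, which at $(a,c)=(\pm 1,0)$ collapses in the appropriate limiting sense to $F(x)=x^r\pm 4$. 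By Lemma~\ref{lemma:irreducibility} this is irreducible iff $v_r(4^{r-1}-1)=1$; combined with the third and fourth rows of Corollary~\ref{coro:conductor-at-r} (the rows $r\nmid ab$), this yields $\varepsilon_r=2$ when $v_r(4^{r-1}-1)\ge 2$ and $\varepsilon_r=3$ when $v_r(4^{r-1}-1)=1$.

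I do not expect any serious obstacle, since the argument is simply the mirror image of the proof of Lemma~\ref{lemma:cond-at-t=0}. The one technical point worth attention is verifying that in the $-$ case the symmetry~(\ref{eq:ismorphism-inv}) is compatible with the quadratic-twist realization of $\HGM^-$, so that the resulting motive at the swapped $s_0=0$ really does match the setup of Lemma~\ref{lemma:cond-at-t=0}. As observed in Remark~\ref{remark:twist}, any such twist character is unramified away from $2qr$ and independent of the parameter, so it does not affect the stated ranges nor the irreducibility-based dichotomy determining $\varepsilon_r$.
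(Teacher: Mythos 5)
Your proposal is correct and uses essentially the same approach as the paper, whose proof is simply to apply the inversion symmetry~(\ref{eq:ismorphism-inv}) and invoke Lemma~\ref{lemma:cond-at-t=0}. You spell out the details accurately, including the degeneration of $F(x)$ to $x^r\pm4$ and the attendant irreducibility criterion via Lemma~\ref{lemma:irreducibility}, and you correctly note that the residual quadratic twist in the $\HGM^-$ case does not affect the stated conductor ranges.
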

 
\begin{proof}
  Follows from Lemma~\ref{lemma:cond-at-t=0}, applying (\ref{eq:ismorphism-inv}).
\end{proof}

\begin{remark}\label{rem:cond-trivial}
	As in some of the results in the previous section, the value of $\varepsilon_r$ in Lemma \ref{lemma:cond-at-t=0} could be more restrictive under some conditions. For example, if $q\nmid \frac{r-1}{2}$, or if Proposition \ref{prop:congruences} holds over $K$ (see Remark \ref{rem:twist-control}). The same applies to the values of $\varepsilon_q$ in Lemma \ref{lemma:cond-at-t=inf}.
	
\end{remark}

\section{An obstruction to the method: ghost solutions}
\label{section:obstruction}
The existence of a non-trivial primitive solution to (\ref{eq:GFE}) 
typically prevents a successful application of the modular method. 
As briefly explained in the introduction, there is another obstruction
which, to our knowledge, has not previously been observed in the literature,
is inherent to Darmon's program, and which we refer to as \emph{ghost solutions}.

As clearly explained in \cite{Darmon}, Darmon's program consists of constructing, from a hypothetical solution
to the generalized Fermat equation, a number field
 (obtained as the fixed field of a Frey
representation) with a small ramification set. A ghost solution is a
rational parameter $t_0 \neq 0$ for which the conductor of the specialized
motive $\HGM^{\pm}(t_0)$ has the same small ramification set.
To illustrate the phenomenon, let us consider Fermat's equation
with signature $(p,p,p)$.
 To a putative solution $(a,b,c)$ one attaches the
Frey elliptic curve
\begin{equation}
  \label{eq:Legendre}
  E_t:y^2=x(x-1)(1-tx),
\end{equation}
specialized at $t_0=-\frac{a^p}{c^p}$. Denote by $\rho_{t_0,p}$ the
Galois representation attached to $E_{t_0}$
twisted by the quadratic character $\theta_c$. Then the residual representation
$\bar{\rho}_{t_0,p}$ is unramified outside $2$ (as expected) and
is the reduction of the Galois representation associated to a modular form whose level is only supported at
$2$.

For $p$ an odd prime, consider the  Diophantine equation
\[
x^p + 2 + z^p=0.
\]
Its set of solutions consists of the point $(-1,-1)$. Specializing
(\ref{eq:Legendre}) at $t_0=-\frac{-1}{-1}=-1$ we obtain the elliptic
curve \lmfdbec{32}{a}{3}, a curve with good reduction outside
$2$. Moreover, it has conductor $N_1=32$. Similarly, if we consider the equations
\[
2 + y^p + z^p=0, \qquad x^p + y^p +2=0,
\]
the point $(-1,-1)$ is always a solution, and corresponds
(respectively) to the specializations $t_0=2$ (a curve isomorphic to
\lmfdbec{32}{a}{3}) and $t_0=\frac{1}{2}$ (corresponding to the
elliptic curve \lmfdbec{64}{a}{3}). Their conductors are $N_2=32$ and
$N_3=64$, respectively.  The three specializations obtained
do not come from real solutions of Fermat's equation, but from a
variant of it (which motivates the terminology \emph{ghost solution}).

It is easy to verify, using Tate's algorithm, that if $(a,b,c)$ is a
solution to Fermat's equation of signature $(p,p,p)$ and
$t_0=-\frac{a^p}{c^p}$, then the conductor of the specialized Frey's
elliptic curve $E_{t_0}$ might have valuation $5$ or $6$ at $2$. There
is no clear reason why the elliptic curves \lmfdbec{32}{a}{3} and
\lmfdbec{64}{a}{3} (coming from ghost solutions) should not be related to genuine solutions, so the modular method ``theoretically'' fails to prove
Fermat's Last Theorem.  However, if we impose the extra hypothesis on
$b$ being even, then $N=2$ and the other spaces of modular forms do
not need to be considered. It is a remarkable coincidence that we can
impose a parity condition on $b$, as this is only true when the
original equation has symmetries, which is not the case for an
arbitrary generalized Fermat equation.

A similar phenomenon occurs while studying the generalized Fermat
equation (\ref{eq:GFE}).
\begin{defi}\label{def:ghost}
  A \emph{ghost solution} for the exponents $(q,r)$ is a tuple
  $(a,c,s,\ell,m,n,u,v) \in \Z^2 \times \Z_{\ge 0}^6$, with
  $ac \neq 0$, satisfying
\begin{equation}\tag{\ref{eq:ghost-eqn}}
q^s r^\ell a^q \pm q^m r^n+ q^ur^vc^r=0.
\end{equation}
\end{defi}
If $(a,c,s,\ell,m,n,u,v)$ is a ghost solution and we set
$t_0:=-\frac{a^qq^{s-u}r^{\ell-v}}{c^r}$, then Theorem
\ref{thm:mot-+-prop} implies that the motive $\mot^+$ is unramified
outside prime ideals of $K$ dividing $qr$.

\begin{exm}
  If $q >3$ is odd, the tuple $(-1,-2,0,0,0,2,0,0)$ is a ghost solution
  for exponents $(q,3)$, since
\[
(-1)^q +  3^2+  (-2)^3=0.
\]
It corresponds to the specialization $t_0=-\frac{1}{8}$. The tuple $(1,-2,0,2,0,0,0,0)$, satisfying
\[
3^2 + (-1)^q +  (-2)^3=0,
\]
corresponds to the ghost solution for exponents $(q,3)$ with
specialization $\frac{9}{8}$.

Similarly, if $q>2$ is odd, $(-1,3,0,0,0,3,0,0)$ is a ghost solution
for exponents $(q,2)$, since
\[
(-1)^q +(-2)^3  +  3^2=0.
\]
It corresponds to the specialization $t_0=\frac{1}{9}$. Similarly, the
tuple $(-1,3,0,3,0,0,0,0)$ corresponding to the equation
\[
2^3(-1)^3 - 1   +  3^2=0
\]
gives the ghost solution with specialization $\frac{8}{9}$. These
tuples are \emph{Catalan's solutions}, since they are related to
solutions of Catalan's equation
\[
x^n - y^m=1,
\]
whose complete set of solutions is $\{(3^2,2^3)\}$ as proved by Mih\u{a}ilescu in
\cite{MR2076124}.
\end{exm}
The modular method has good chances to prove non-existence of
solutions precisely when the specialization of $\HGM^{+}(t)$ at any
ghost solution is a motive whose conductor exponents are not the ones
described in Tables~\ref{table:cond3}, \ref{table:Conductorr} and
\ref{table:Conductorq}. Otherwise, using current techniques, we can
only expect to obtain partial results.
\begin{remark}
  While working with the motive $\HGM^-(t)$ we need to also allow
  powers of $2$ in (\ref{eq:ghost-eqn}).
\end{remark}  

\subsection{Conductors of Catalan's ghost solutions}
Let $\id{q}$ be the prime ideal of $K=\Q(\zeta_q)^+$ dividing $q$ and let
$f(x)$ be the polynomial defined in (\ref{eq:f-pol}).  Throughout this section we denote
$\HGM(t) =
\HGM\left(\left(\frac{1}{4},-\frac{1}{4}\right),\left(\frac{1}{q},-\frac{1}{q}\right)|t\right)$
for the exponents $(q,2)$ and
$\HGM^+(t) =
\HGM\left(\left(\frac{1}{3},-\frac{1}{3}\right),\left(\frac{1}{q},-\frac{1}{q}\right)|t\right)$
for the exponents $(q,3)$.
\begin{lemma}
  \label{lemma:Catalan-2} If $q\ge5$, the specialization of
  $\HGM(t)$
  at $t_0=\frac{1}{9}$ has conductor $2^6\cdot\id{q}^{\varepsilon_q}$, with
\[\varepsilon_q=\begin{cases}
	2 & \text{ if } F(x) \text{ is reducible over } \Q_q,\\
	3  & \text{ if } F(x) \text{ is irreducible over } \Q_q,\\
\end{cases}\]
where $F(x)=f(x) + 34 $.
\end{lemma}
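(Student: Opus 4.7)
The plan is to compute the conductor of $\HGM(1/9)$ prime-by-prime by reinterpreting $t_0 = 1/9$ as the specialization coming from the Catalan ghost tuple $(a, b, c) = (-1, -2, 3)$, which satisfies $(-1)^q + (-2)^3 + 3^2 = 0$, and then applying the framework of \S \ref{sec:conductor} to this pseudo-solution with signature $(q,p,2)$. Since the motive is unramified outside $2q$, it suffices to handle the prime $\id{q}$ above $q$ and the prime $2$.

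First I would handle $\id{q}$. Since $q \geq 5$, none of $a,b,c$ is divisible by $q$, so one lands in the last two rows of Proposition \ref{prop:conductor-at-q}: the exponent is $2$ or $3$ according to the reducibility over $\Q_q$ of the polynomial in \eqref{eq:Def-F-invertida}. Substituting $a = -1$ and $c = 3$, and using that $f$ is odd (cf.\ \eqref{eq:f-pol}), yields $F(x) = -f(-x) - 2 + 36 = f(x) + 34$, matching the statement. For the prime $2$, my plan is to relate $\HGM(1/9)$ to the motive $\mot^- = \HGM(1/9) \otimes \theta_3$, whose $2$-adic conductor can be read off from Proposition \ref{prop:cond-at-2}. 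Since $2 \mid b$ and $a^q = -1 \equiv 3 \equiv 3 c^r \pmod 4$, that proposition yields conductor exponent $5$ for $\mot^-$ at $2$. The character $\theta_3$ has conductor exponent $2$ at $2$ (since $\Q_2(\sqrt{3})/\Q_2$ is ramified with discriminant valuation $2$), and tracking how the twist by $\theta_3$ interacts with the local Weil-Deligne representation of $\mot^-$ at $2$ --- extracted from the proof of \cite[Theorem 7.2]{GP} --- then gives $v_2(\cond(\HGM(1/9))) = 6$.

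The main obstacle is the precise conductor computation at $2$: one needs enough structural information about the local type of $\mot^-$ at $2$ to control the twist, because the naive bound $a(\rho \otimes \chi) \leq a(\rho) + 2\, a(\chi)$ is not sharp here. By contrast, the computation at $\id{q}$ is a direct application of the hyperelliptic-curve reduction theory developed in \S \ref{sec:conductor}, so essentially all the work is in bookkeeping the $2$-adic twist.
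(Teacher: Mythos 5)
Your computation at $\id{q}$ is essentially sound, and matches the paper's in spirit: the algebra $F(x) = -f(-x) - 2 + 36 = f(x) + 34$ is correct, and the dichotomy between reducible and irreducible $F$ governing $\varepsilon_q \in \{2,3\}$ comes from the same cluster-picture computation. However, citing Proposition~\ref{prop:conductor-at-q} is not quite legitimate, because that proposition carries the hypothesis $p, r > 2$, whereas here the second exponent is $r = 2$. The paper circumvents this by first passing to $\widetilde{\HGM}(t) = \HGM\bigl((\frac{1}{q},-\frac{1}{q}),(1,1)\,|\,\frac{1}{t}\bigr)$ via the mod-$2$ congruence and \eqref{eq:ismorphism-inv}, identifying the relevant object as $J^+_q(9)$, and then applying Proposition~\ref{prop:conductor-r} directly with the observation that its $p, q > 3$ hypothesis is only needed for the first two table rows, which do not occur here. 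Your chain of implications is morally the same, but you would need to make this hypothesis check explicit rather than leaning on Proposition~\ref{prop:conductor-at-q}.

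The gap is at the prime $2$. First, Proposition~\ref{prop:cond-at-2} is stated only for $r \geq 3$, and the Catalan tuple lives in the signature $(q,p,2)$ with $r = 2$, so you cannot read $v_2(\cond(\mot^-)) = 5$ off the table without re-proving (or re-examining) the underlying result \cite[Theorem 7.2]{GP} for $r = 2$. Second, even granting that value, the twist bookkeeping you flag as "the main obstacle" is in fact the entire difficulty: passing from $a(\mot^-) = 5$ and $a(\theta_3) = 2$ to $a(\mot^- \otimes \theta_3) = 6$ requires knowing the precise supercuspidal inducing datum of $\mot^-$ at $2$, which is not directly available. The paper sidesteps both problems by an entirely different route: it congruence-reduces modulo $\id{q}$ (not modulo $2$, and without any twist) to the hypergeometric motive $\HGM\bigl((\frac{1}{4},-\frac{1}{4}),(1,1)\,|\,t\bigr)$, identifies the latter with the explicit elliptic curve $E_2(t):y^2 + xy = x^3 + \frac{t}{64}x$, computes $v_2(\cond(E_2(1/9))) = 6$ by Tate's algorithm, reads off the local type from \cite[Table 1]{DFV}, and then argues this local type is preserved both under base change to $K$ (since $2$ is unramified in $K/\Q$) and under the mod-$\id{q}$ congruence (since $q$ is odd). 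That argument gives the conductor directly and requires no control over a quadratic twist.
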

\begin{proof}
  The proof mimics that of Corollary~\ref{coro:conductor-at-r}. We know that $\HGM(t)$ is defined over the totally real field $K$.  By (\ref{eq:ismorphism-inv}) and  \cite[Theorem 9.3]{GP}, over $F=\Q(\zeta_{4q})$,  it is congruent modulo 2
  to
  \begin{align}\label{eq:Tilde-Mot}
	   \widetilde{\HGM}(t)=\HGM\left(\left(\frac{1}{q},-\frac{1}{q}\right),\left(1,1\right)\Big|\frac{1}{t}\right).
  \end{align}


Then, by Proposition \ref{prop:darmon-hyper-relation}, the conductor at $\id{q}$ of $ \widetilde{\HGM}(\frac{1}{9})$ equals that of the 2-dimensional Galois representation attached to $J^+_q(9)$. The latter follows from Proposition~\ref{prop:conductor-r}, noting that  $t_0$ corresponds to the solution $(a,b,c)=(-1,-2,3)$, and using (\ref{eq:Def-F-invertida}) for the formula of $F(x)$. It is  worth noting that Proposition~\ref{prop:conductor-r} can be applied, since the hypothesis $p,q>3$ is only used to compute the conductor in the two first rows, which is not our case.  

 By Remark~\ref{remark:twist},
  over $K$, the motive $\HGM(t)$ might be isomorphic not to
  $\widetilde{\HGM}(t)$, but to a ramified quadratic twist of it.  But
  the fact that $e=4$ or $4r$ in Proposition~\ref{prop:conductor-r}
  implies that conductor is invariant under ramified quadratic twists,
  because the local type is either supercuspidal, or a ramified
  principal series given by an order $4$ character and its inverse.
  

\vspace{5pt}

  Let $\id{n}$ be a prime ideal of $K$ dividing $2$.  To compute the
  conductor at $\id{n}$, we use its congruence (over $F$) modulo
  $\id{q}$ with the motive
  \[
    \HGM\left(\left(\frac{1}{4},-\frac{1}{4}\right),\left(1,1\right)|t\right).
  \]
  The latter motive corresponds (see \cite{Cohen} and also
  \cite[Section 3.3]{GP}) to the elliptic curve
  \begin{equation}\label{eq:ellcurve-2}
  	    E_2(t) : y^2+xy=x^3+\frac{t}{64}x.
  \end{equation}
  The conductor exponent of $E_2$ specialized at $t_0=\frac{1}{9}$ at
  $2$ is $6$, as can be easily computed by running Tate's algorithm. Furthermore, by \cite[Table
  1]{DFV}, its local type
  corresponds to the quadratic twist by $\theta_{8}$ of a
  supercuspidal representation obtained by inducing a character of
  order $4$ from a ramified quadratic extension. 
   Since $2$ is unramified in $K/\Q$, the
  twisted curve has the same local type at $\id{n}$ over $K$, and the
  local type is preserved by the congruence, because $q$ is odd. A
  similar argument proves that the result is also true over~$K$.
\end{proof}

\begin{lemma}
	\label{lemma:Catalan-2-8/9} If $q\ge5$, the specialization of
	$\HGM(t)$
	at $t_0=\frac{8}{9}$ has conductor $2^5\cdot\id{q}^{\varepsilon_q}$, with
	\[\varepsilon_q=\begin{cases}
		2 & \text{ if } F(x) \text{ is reducible over } \Q_q,\\
		3  & \text{ if } F(x) \text{ is irreducible over } \Q_q,\\
	\end{cases}\]
	where $F(x)=2f(x/2) - 5\cdot2^{q-1}$.
\end{lemma}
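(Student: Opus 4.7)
The plan is to follow the two-part structure of the proof of Lemma \ref{lemma:Catalan-2}, with the specialization $t_0 = 8/9$ in place of $1/9$.

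For the conductor at $\id{q}$, I would first observe that, by (\ref{eq:ismorphism-inv}) together with \cite[Theorem 9.3]{GP}, over $F = \Q(\zeta_{4q})$ the motive $\HGM(t)$ is congruent modulo $2$ to $\widetilde{\HGM}(t) = \HGM((\tfrac{1}{q},-\tfrac{1}{q}),(1,1) | 1/t)$, so the conductor at $\id{q}$ of $\HGM(8/9)$ coincides with that of $\widetilde{\HGM}(8/9)$. By Proposition \ref{prop:darmon-hyper-relation}, the latter is a rank-$2$ motive in $J_q^+(9/8)$, whose $\id{q}$-adic conductor is given by Proposition \ref{prop:conductor-r} (applied with $r = q$); the hypothesis $p, q > 3$ there is only used in its first two rows, which do not apply here. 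The polynomial $F(x) = 2f(x/2) - 5\cdot 2^{q-1}$ arises as the (suitably scaled) degree-$q$ factor of the curve $C_q^+(9/8): y^2 = (x+2)(f_q(x) - 5/2)$, so its irreducibility over $\Q_q$ determines whether $\varepsilon_q$ equals $2$ or $3$. As in the proof of Lemma \ref{lemma:Catalan-2}, the possible ramified quadratic twist needed when descending from $F$ to $K$ (see Remark \ref{remark:twist}) does not affect the conductor valuation, because the local type at $\id{q}$ is either supercuspidal or a ramified principal series induced by an order-$4$ character, both of which have conductor invariant under ramified quadratic twists.

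For the conductor at a prime $\id{n}$ of $K$ above $2$, I would use the congruence modulo $\id{q}$ of $\HGM(t)$ with $\HGM((\tfrac{1}{4},-\tfrac{1}{4}),(1,1)|t)$, which by (\ref{eq:ellcurve-2}) corresponds to the elliptic curve $E_2(t): y^2 + xy = x^3 + (t/64)\,x$. Specializing at $t_0 = 8/9$ yields $E_2(8/9): y^2 + xy = x^3 + x/72$, and the admissible change of variables $(x,y) \mapsto (X/36, Y/216)$ produces the integer model $Y^2 + 6XY = X^3 + 18X$, with $\Delta = 2^6\cdot 3^6$ and $j = 1728$. A direct run of Tate's algorithm on this model shows that $v_2(b_8) = 2$ and $v_2(b_2) > 0$, so the Kodaira symbol at $2$ is $III$; Ogg's formula then gives conductor exponent $v_2(\Delta) - 1 = 5$. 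The corresponding local type at $2$ can be read off from \cite[Table 1]{DFV}, and since $2$ is unramified in $K/\Q$ and $q$ is odd, both the local type and the conductor valuation are preserved when passing from $E_2(8/9)$ to $\HGM(t_0)$ and from $F$ to $K$, by the same argument used at the end of the proof of Lemma \ref{lemma:Catalan-2}.

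The technical step requiring the most care is deriving the polynomial $F(x)$ from $C_q^+(9/8)$ while tracking the $2$-adic scalings, and performing Tate's algorithm at $2$ for the integer model above. Both are routine extensions of the computations in Lemma \ref{lemma:Catalan-2}, and no new conceptual idea is needed beyond that lemma.
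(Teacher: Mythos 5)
Your proposal is correct and follows essentially the same route as the paper: the mod-$2$ congruence with $\widetilde{\HGM}(t)$ reduces the $\id{q}$-part to a cluster-picture computation for the hyperelliptic curve attached to $C_q^+(9/8)$, and the mod-$\id{q}$ congruence with $E_2(t)$ gives the conductor $5$ and the local type at $2$, with the twist discussion handled exactly as in Lemma~\ref{lemma:Catalan-2}. The only difference worth flagging is that the paper does not invoke Proposition~\ref{prop:conductor-r} directly here (since $9/8$ is not of the form $-a^q/c^r$ for a solution to~(\ref{eq:GFE})); instead it writes down the integral models $\bfC^\pm$ explicitly, identifies $\bfC^-$ with the curve $C(z,s)$ of \cite[Equation~(3)]{PedroLucas} for $z=2^2$, $s=-5\cdot2^{q-1}$, and then reruns the argument of Proposition~\ref{prop:conductor-r}; your direct citation silently skips this identification, although the underlying computation is identical and the needed coprimality ($q\nmid\Delta$ for $q\ge5$) holds automatically, so no exceptional primes appear as in Lemma~\ref{lemma:Catalan-3-9/8}.
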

\begin{proof}
As in the proof of Lemma~\ref{lemma:Catalan-2}, we have that 
$\HGM(t)$ is congruent modulo $2$ to $\widetilde{\HGM}(t)$, defined in (\ref{eq:Tilde-Mot}). Hence, to determine the value of $\varepsilon_q$ we need to study the conductor of $C_q^+\!\left(\frac{9}{8}\right)$ at $\id{q}$. As in Lemma~\ref{lemma:iso-curves}, it is easy to see that 
$C^+\!\left(\frac{9}{8}\right)$ and $C^-\!\left(\frac{9}{8}\right)/\Q(\sqrt{2})$ are isomorphic to the hyperelliptic curves
\[
\bfC^+ :\; y^2 = (x+4)\bigl(2f(x/2) - 5\cdot 2^{\,q-1}\bigr), \qquad
\bfC^- :\; y^2 = 2f(x/2) - 5\cdot 2^{\,q-1},
\]
respectively. Thus, it suffices to study the conductor of $\bfC^+$ at $\id{q}$. This can be done identically as in the proof of Proposition~\ref{prop:conductor-r}, noting that $\bfC^-$ equals the curve $C(z,s)$ defined in \cite[Equation~(3)]{PedroLucas}, for $z = 2^2$ and $s = -5\cdot 2^{\,q-1}$. To finish the computation of $\varepsilon_q$, the same argument as in the proof of Lemma~\ref{lemma:Catalan-2} holds.

To compute the conductor exponent at some prime $\id{n}$ dividing $2$, we can again use that $\HGM(t)$ is congruent modulo $\id{q}$ to the motive corresponding to the elliptic curve $E_2(t)$, defined in (\ref{eq:ellcurve-2}). The conductor exponent at $2$ of $E_2$ specialized at $t_0=\frac{8}{9}$ is $5$ and, by \cite[Table~1]{DFV}, its local type is given by a supercuspidal representation obtained by inducing a character of order 4 from  a ramified quadratic extension. 
Then the result follows as in Lemma~\ref{lemma:Catalan-2}.
\end{proof}

\begin{lemma}
\label{lemma:Catalan-3}  
If $q\ge 5$, the specialization of
$\HGM^+(t)$
at $t_0=-\frac{1}{8}$ has conductor $3^3\cdot\id{q}^{\varepsilon_q}$,
with
\[\varepsilon_q=\begin{cases}
	2 & \text{ if } F(x) \text{ is reducible over } \Q_q,\\
	3  & \text{ if } F(x) \text{ is irreducible over } \Q_q,\\
\end{cases}\]
where $F(x)=f(x) - 34$.
\end{lemma}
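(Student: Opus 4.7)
The plan is to follow the strategy of Lemmas~\ref{lemma:Catalan-2} and~\ref{lemma:Catalan-2-8/9}, handling separately the contribution to the conductor of $\HGM^+(-1/8)$ at the prime $\id{q}$ of $K=\Q(\zeta_q)^+$ above $q$ and at the primes of $K$ dividing $3$.

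For the conductor at $\id{q}$, I would first apply~(\ref{eq:ismorphism-inv}) to rewrite $\HGM^+(t)\simeq\HGM((\tfrac{1}{q},-\tfrac{1}{q}),(\tfrac{1}{3},-\tfrac{1}{3})|1/t)$, and then, over $F=\Q(\zeta_{3q})$, use the analog of Proposition~\ref{prop:congruences} (namely \cite[Theorem 9.3]{GP} applied to the second pair of parameters) to produce a congruence modulo primes above $3$ with $\widetilde{\HGM}(t):=\HGM((\tfrac{1}{q},-\tfrac{1}{q}),(1,1)|1/t)$. This preserves conductors at primes away from $3$, so by Proposition~\ref{prop:darmon-hyper-relation} it suffices to compute the conductor at $\id{q}$ of the $\GL_2$-type motive attached to $\Jac(C_q^+(-8))$. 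Interpreting $t_0=-8$ via the ghost equation $a^q+3^2+c^3=0$ with $(a,c)=(-1,-2)$, Lemma~\ref{lemma:iso-curves} identifies $C_q^+(-8)$ with the hyperelliptic model attached to this ghost data, and Proposition~\ref{prop:conductor-r} applies (its assumption $p,q>3$ is only needed in the first two rows of Table~\ref{table:Cr-local}, which are not relevant here). Since $q\ge 5$ is coprime to the ghost parameters, only the last two rows contribute, so $\varepsilon_q$ equals $2$ or $3$ according to the reducibility of $F(x)$ over $\Q_q$; the swapped formula~(\ref{eq:Def-F-invertida}), together with the parities of $q$ and of $f$, yields
\[
F(x)=a^q f(x/a)+2a^q+4c^r=(-1)^q f(-x)-2-32=f(x)-34,
\]
matching the stated polynomial. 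The possible quadratic twist from $F$ to $K$ (cf.\ Remark~\ref{remark:twist}) does not affect $\varepsilon_q$, since in both rows the local type (supercuspidal, or ramified principal series induced by an order-$4$ character) is invariant under quadratic twists.

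For the conductor at primes above $3$, Proposition~\ref{prop:congruences} directly gives a congruence modulo $\id{q}$ between $\HGM^+(-1/8)$ and $\HGM((\tfrac{1}{3},-\tfrac{1}{3}),(1,1)|-1/8)$, which by Proposition~\ref{prop:darmon-hyper-relation} is a rank-$2$ motive of the elliptic curve $E_3^+(-1/8)\colon y^2+3xy-\tfrac{1}{8}y=x^3$. Clearing denominators via $(x,y)\mapsto(x/4,y/8)$ produces the integral model $y^2+6xy-y=x^3$, whose invariants satisfy $\Delta=-243$ and $v_3(c_4)=2$; running Tate's algorithm at $p=3$ identifies the Kodaira type as $\mathrm{IV}$, so Ogg's formula gives $v_3(N)=v_3(\Delta)-m+1=5-3+1=3$, the excess over the ``generic'' value $2$ for type $\mathrm{IV}$ reflecting a non-trivial wild contribution at $3$. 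Since $K/\Q$ is unramified at $3$ and the twist from Remark~\ref{remark:twist} (independent of $t$, hence computable once for the signature $(q,3)$) can be checked, as in Lemma~\ref{lemma:Catalan-2}, to preserve the local type at $3$, the exponent $3$ propagates to every prime of $K$ above $3$ and yields the factor $3^3$.

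The main technical difficulty I expect lies in the computation at $3$: because $3$ is a small residue characteristic, Kodaira type $\mathrm{IV}$ may carry a non-trivial wild conductor, so Tate's algorithm must be run with care (and the minimality of the chosen model verified at $3$), and the wild contribution must be carefully transported through the $\pmod{\id{q}}$ congruence with $E_3^+(-1/8)$ as well as the descent from $F$ to $K$.
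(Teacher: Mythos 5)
Your proof is correct and follows essentially the same route as the paper: for $\varepsilon_q$ you pass via~(\ref{eq:ismorphism-inv}) and a congruence mod~$3$ to Darmon's curve $C_q^+(-8)$, identify the ghost datum $(a,c)=(-1,-2)$, and read off $F(x)=f(x)-34$ from~(\ref{eq:Def-F-invertida}) and Proposition~\ref{prop:conductor-r} (noting the $p,q>3$ hypothesis is only needed for the first two rows); for the prime $3$ you use the congruence mod~$\id{q}$ with $E_3^+(-1/8)$ and Tate's algorithm (your Kodaira-type-$\mathrm{IV}$/Ogg computation just spells out what the paper summarizes as ``easy to see''), then argue the conductor exponent transports across $K/\Q$ unramified at $3$. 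The only cosmetic discrepancy is that the paper writes the ghost data as $(a,b,c)=(-1,3,2)$, which is a sign slip; your $(a,c)=(-1,-2)$ is what actually produces $F(x)=f(x)-34$.
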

\begin{proof} The argument to compute the value of $\varepsilon_q$ follows as in Lemma \ref{lemma:Catalan-2}. That is, we can apply Proposition~\ref{prop:conductor-r} via a congruence modulo $3$ (with 
	the same subtlety of working over $K$ instead of $F$).  Note that  $t_0$ corresponds to the solution $(a,b,c)=(-1,3,2)$, so the formula of $F(x)$ follows from (\ref{eq:Def-F-invertida}).
    In this case, the formula for $F(x)$ follows from (\ref{eq:Def-F-invertida}), using that $(a,b,c)=(-1,3,2)$.
  
    Let $\id{n}$ be a prime ideal of $K$ dividing $3$. Note that
    Proposition \ref{prop:Et-reduction} does not apply because
    $v_3(1/t_0-1)=2$. By Proposition \ref{prop:congruences},
    $\HGM^+(t)$ is congruent modulo $\id{q}$ to the elliptic curve
    $E_3^+(t)$ defined in (\ref{eq:r=3-curves}).  Using Tate's
    algorithm, it is easy to see that the conductor of $E_3^+(-1/8)$
    at $3$ is $3$, 
    so its local type corresponds to a supercuspidal representation
    obtained from the induction of an order $6$ character of a
    ramified quadratic extension $\Q_3$ (see Table 1 of
    \cite{DFV}). Since $q>3$, $K/\Q$ is unramified at $3$, so
    $\HGM^+(-1/8)$ has the same local type and conductor exponent at
    $\id{n}$ as the elliptic curve.
\end{proof}

\begin{lemma}
	\label{lemma:Catalan-3-9/8}  
	Suppose $q\ge 5$, $q\notin \{19,37\}$. Then, the specialization of
	$\HGM^+(t)$
	at $t_0=\frac{9}{8}$ has conductor $3^3\cdot\id{q}^{\varepsilon_q}$,
	with 
	\[\varepsilon_q=\begin{cases}
		2 & \text{ if } F(x) \text{ is reducible over } \Q_q,\\
		3  & \text{ if } F(x) \text{ is irreducible over } \Q_q,\\
	\end{cases}\]
	where $F(x)=3f(x/3) - 14\cdot3^{q-2}$.
\end{lemma}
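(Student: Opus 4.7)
The strategy mirrors that of Lemma~\ref{lemma:Catalan-3}, the new ingredient being an extra rescaling to absorb the ghost multiplier $3^2$ visible in the ghost equation $3^2 + (-1) + (-2)^3 = 0$, much as in Lemma~\ref{lemma:Catalan-2-8/9}.

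For the conductor exponent $\varepsilon_q$, I would first apply~(\ref{eq:ismorphism-inv}) to rewrite
\[
\HGM^+(t) \simeq \HGM\!\left(\!\left(\tfrac{1}{q},-\tfrac{1}{q}\right),\left(\tfrac{1}{3},-\tfrac{1}{3}\right)\Big|\tfrac{1}{t}\right),
\]
and then, by Proposition~\ref{prop:congruences}, pass over $F=\Q(\zeta_{3q})$ to a congruence modulo a prime above $q$ with the simpler motive $\widetilde{\HGM}(t) := \HGM((1/q,-1/q),(1,1)\,|\,1/t)$. By Proposition~\ref{prop:darmon-hyper-relation}, $\widetilde{\HGM}(9/8)$ is a rank-$2$ piece of the Jacobian of the hyperelliptic curve $C_q^+(8/9)$. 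As in Lemma~\ref{lemma:iso-curves}, an integral $K$-model is obtained by a change of variables designed to absorb the ghost factor $3^2$, after which the polynomial governing the cluster picture is precisely $F(x) = 3f(x/3) - 14\cdot 3^{q-2}$. Proposition~\ref{prop:conductor-r} then yields $\varepsilon_q = 2$ when $F$ is reducible over $\Q_q$ and $\varepsilon_q = 3$ otherwise. The quadratic twist needed to descend the congruence from $F$ to $K$ (Remark~\ref{remark:twist}) leaves the conductor exponent invariant, since the possible local types at $\id{q}$ are supercuspidal or ramified principal series induced from an order-$4$ character and its inverse, all invariant under quadratic twists.

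For the conductor at a prime $\id{n}\mid 3$, I would use Proposition~\ref{prop:congruences} to compare $\HGM^+(t)$ modulo $\id{q}$ with the elliptic curve $E_3^+(t)$ of~(\ref{eq:r=3-curves}). At $t_0=9/8$ we have $8t_0-9 = 0$, so $j(E_3^+(9/8))=0$; an integral model is $y^2+6xy+9y=x^3$, with $\Delta = -3^9$ and $c_4 = 0$, which is a twist of the CM elliptic curve \lmfdbec{27}{a}{3}. Tate's algorithm identifies Kodaira type $III^*$ at $3$, giving conductor exponent $3$ and $e=4$; by Table~1 of~\cite{DFV} the local type is supercuspidal, induced from an order-$4$ character of a ramified quadratic extension of $\Q_3$. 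Since $K/\Q$ is unramified at $3$, both the local type and the conductor exponent are preserved upon restriction to $K_{\id{n}}$, and are then transferred to $\HGM^+(9/8)$ by the mod-$\id{q}$ congruence.

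The main obstacle, I expect, is to pin down the exclusion $q\in\{19,37\}$. These appear to be the exceptional primes for the CM Hecke character attached to \lmfdbec{27}{a}{3}, where the mod-$q$ residual representation of the curve either becomes reducible or fails to transfer the local type at $3$ through the congruence. A direct computation of Frobenius traces $a_q$ of \lmfdbec{27}{a}{3} (equivalently, of congruences among CM Hecke character values) should isolate these two primes as the only obstruction for $q\ge 5$, after which the preceding argument applies without modification.
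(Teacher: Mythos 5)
Your overall architecture — invert via~(\ref{eq:ismorphism-inv}), congruence to a Darmon curve for $\varepsilon_q$, and a separate congruence modulo $\id{q}$ with $E_3^+(t)$ for the exponent at~$3$ — matches the paper's. But three of your key details are wrong, and the third one is a genuine gap that your strategy cannot fill.

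First, to compute $\varepsilon_q$ the congruence must be taken modulo a prime above~$3$, not above~$q$. Proposition~\ref{prop:congruences} replaces the pair $(\tfrac{1}{3},-\tfrac{1}{3})$ with $(1,1)$ precisely by reducing modulo~$3$, and this is essential: to transfer conductor information at $\id{q}$ through a congruence, the modulus of the congruence must be coprime to~$q$. Your ``modulo a prime above $q$'' would make the argument circular.

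Second, your suggested origin for the exclusion $q\notin\{19,37\}$ — exceptional primes for the CM Hecke character of \lmfdbec{27}{a}{3}, or mod-$q$ reducibility of that curve — is not what's happening, and ``should isolate these two primes'' is a hope rather than an argument. The actual source is arithmetic of the cluster-picture framework from~\cite{PedroLucas}: after writing $\bfC^-$ in the family $C(z,s)$ with $z=3^2$ and $s=-14\cdot 3^{q-2}$, the quantity $\Delta = z^q - 4s^2 = 3^{2q-4}(81-784) = -3^{2q-4}\cdot 19\cdot 37$ controls whether the Jacobian has potentially good reduction at~$q$. If $q\mid\Delta$, i.e.\ $q\in\{19,37\}$, one lands in the potentially multiplicative rows of Proposition~\ref{prop:conductor-r} (exponents $0,1,2$), and the stated formula for $\varepsilon_q$ need not hold. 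Without identifying and factoring $\Delta$, you have no way to pin down the excluded primes, so this is a real gap.

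Third, $E_3^+(9/8)\colon y^2+6xy+9y=x^3$ has $c_4=0$, $\Delta=-3^9$, and running Tate's algorithm at $3$ gives Kodaira type $\mathrm{IV}^*$, not $\mathrm{III}^*$. Your claim is internally inconsistent: type $\mathrm{III}^*$ has $e=4$, coprime to~$3$, hence is tame at $3$ with conductor exponent~$2$, not~$3$. With type $\mathrm{IV}^*$ one gets $m=7$ components and $f = v(\Delta)-m+1 = 3$, which is the exponent the lemma needs; the wildness ($e=3$ at residue characteristic $3$) also means the local type is supercuspidal induced from an order-$6$ character, as in Lemma~\ref{lemma:Catalan-3}, not an order-$4$ one.
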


\begin{proof}
	As in the proof of Lemma~\ref{lemma:Catalan-2}, we have that 
	$\HGM^+(t)$ is congruent modulo $3$ to $\widetilde{\HGM}(t)$, defined in (\ref{eq:Tilde-Mot}). Hence, to determine the value of $\varepsilon_q$ we need to study the conductor of $C_q^+\!\left(\frac{8}{9}\right)$ at $\id{q}$. As in Lemma~\ref{lemma:iso-curves}, it is easy to see that 
	$C^+\!\left(\frac{8}{9}\right)$ and $C^-\!\left(\frac{8}{9}\right)/\Q(\sqrt{3})$ are isomorphic to the hyperelliptic curves
	\[
	\bfC^+ :\; y^2 = (x+6)\bigl(3f(x/3) -14\cdot3^{q-2}\bigr), \qquad
	\bfC^- :\; y^2 = 3f(x/3) -14\cdot3^{q-2},
	\]
	respectively. 
	Note that $\bfC^-$ equals the curve $C(z,s)$ defined in \cite[Equation~(3)]{PedroLucas}, for $z = 3^2$ and $s = -14\cdot3^{q-2}$. Following the notation in \cite{PedroLucas}, $\Delta$ (see equation (\ref{eq:delta})) is given by \[\Delta=z^q-4s^2=-3^{2q-4}\cdot19\cdot37.\]
	Therefore, if $q\neq19, 37$, we get that $q\nmid \Delta$. The rest of the proof to compute $\varepsilon_q$ follows as in the previous lemmas, which repeat the arguments in Proposition~\ref{prop:conductor-r}.
	
	The conductor exponent at a prime dividing 3 follows in the same way as Lemma \ref{lemma:Catalan-3}.
\end{proof}


\subsection{Specializations with Complex Multiplication}
We have seen in \S \ref{section:trivial} that the specializations
$t_0=0, \infty$ (arising from trivial solutions) correspond to abelian
varieties with complex multiplication (hence Hilbert modular forms
with complex multiplication). 

\begin{prop}
  \label{prop:non-cm}
  Let $q>r$ be odd primes. Let $(a,b,c)$ be a non-trivial solution to
  (\ref{eq:GFE}) with $p \ge 3$ and set $t_0=-\frac{a^q}{c^r}$. If the
  motive $\mot^{+}$ has complex multiplication, then there exists
  $m,n \in \Z_{\ge 0}$ such that $(a,c,0,0,mp,np,0,0)$ is
  a ghost solution for the exponents $(q,p,r)$.
\end{prop}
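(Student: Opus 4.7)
The plan is to exploit the fact that a CM motive of $\GL_2$-type has potentially good reduction at every finite prime, and combine this with Theorem \ref{thm:mot-+-prop}(2) to force every prime divisor of $b$ into the set $\{q,r\}$. Then substituting back into the Fermat equation produces exactly a ghost solution of the required shape.

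First, I would unpack the CM hypothesis. Because $\mot^+$ is a rank $2$ motive of $\GL_2$-type defined over the totally real field $K$, and (by Theorem \ref{thm:modularity}, since $r=3$ is the only case in which $\mot^+$ is relevant here) it is modular, having complex multiplication means the associated compatible system $\{\rho_{\id{p}}^+\}$ is induced from a Hecke character of a quadratic CM extension $L/K$. Equivalently, a suitable Tate twist $\mot^+(1)$ corresponds to an abelian variety of $\GL_2$-type over $K$ whose endomorphism algebra contains a CM field. Either description implies that the inertia action at every finite prime of $K$ is semisimple with finite image; in particular, the non-semisimple (unipotent) monodromy characteristic of multiplicative reduction cannot occur. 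Hence $\mot^+$ has potentially good reduction at every prime of $K$.

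Next, I would invoke Theorem \ref{thm:mot-+-prop}(2): for any prime ideal $\idK$ of $K$ with $\idK\nmid qr$ and $\idK\mid b$, the motive $\mot^+$ has multiplicative reduction at $\idK$. By the previous step no such $\idK$ can exist, so every rational prime dividing $b$ must lie in $\{q,r\}$. Writing $b=\pm q^m r^n$ with $m=v_q(b)$, $n=v_r(b)$ and using that $p$ is odd, the relation $a^q+b^p+c^r=0$ becomes
\[
a^q \pm q^{mp} r^{np} + c^r = 0,
\]
which is exactly equation (\ref{eq:ghost-eqn}) evaluated at the tuple $(a,c,0,0,mp,np,0,0)$. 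Since $(a,b,c)$ is a non-trivial solution we have $ac\ne 0$, so this tuple is a bona fide ghost solution in the sense of Definition \ref{def:ghost}, proving the proposition.

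The main obstacle is the first step: carefully justifying the passage from ``$\mot^+$ has CM'' to ``potentially good reduction at every prime of $K$''. The cleanest argument goes through modularity, since the Galois representation attached to a CM Hilbert modular form is induced from a Hecke character and therefore has finite inertia image (so additive or good, but never multiplicative, reduction). An alternative is to argue on the abelian variety side: the identity component of the Néron model of an abelian variety of $\GL_2$-type with CM cannot contain a torus factor, because a CM field of degree $2g$ cannot act faithfully on a split torus of dimension $g$. Either way the step is standard, but it should be cited explicitly in the final write-up.
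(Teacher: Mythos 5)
Your proof is correct and follows the same route as the paper's: CM implies potentially good reduction at every finite place (the paper simply cites Serre--Tate, \cite[Corollary~1]{MR236190}, for this), which combined with Theorem~\ref{thm:mot-+-prop}(2) forces $b$ to be supported at $\{q,r\}$, and writing $b=\pm q^m r^n$ gives the ghost solution directly. One small note: your parenthetical appeal to Theorem~\ref{thm:modularity} with the remark that ``$r=3$ is the only case relevant here'' is out of place, as the proposition is stated for arbitrary odd primes $q>r$; but this does not affect your argument, since the N\'eron-model reasoning you give as an alternative is exactly the content of the result the paper cites and carries the step independently of modularity.
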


\begin{proof}
  Abelian varieties with complex multiplication have potentially good
  reduction at all primes, by \cite[Corollary 1]{MR236190}. By
  Theorem~\ref{thm:mot-+-prop}, all primes dividing $b$ and prime to
  $qr$ are primes of multiplicative reduction so $b$ is supported
  only at the primes $q$ and $r$. If $b=\pm q^m r^n$, then we have the
  relation
  \[
   a^q \pm q^{mp}r^{np} + c^r = 0,
 \]
 so $(a,c,0,0,mp,np,0,0)$ is a ghost solution.
%
%
%
%
%
\end{proof}
%
%
%
%
%
%
\begin{remark}
  In general we do not expect ghost solutions to have complex
  multiplication, but we do not know how to prove this fact. This
  problem is related to that of computing the set of values for which
  the specialized motive has complex multiplication (as explicitly
  done in \S 5 of \cite{MR971994}; unfortunately it depends on a
  result of \cite{MR931211} which is known to have a gap in its
  proof).
\end{remark}

\subsection{Computing some sets of ghost solutions}
It is not clear to us how, given the exponents $(q,r)$, one can compute all tuples $(a,c,s,\ell,m,n,u,v)$ satisfying~(\ref{eq:ghost-eqn}). What is
easy to do is to run some numerical experiments, searching for
solutions to (\ref{eq:ghost-eqn}) in boxes and compute for the
solutions found the conductor of the respective $2$-dimensional Galois
representation. When the conductor lies within the set of conductors
coming from hypothetical solutions to~(\ref{eq:GFE}) then the modular
method will fail to prove complete results on non-existence of
solutions. Otherwise, our computations (if complete) allow to
determine which exponents of the generalized Fermat equation should be
studied if we expect the modular method to work (always conditional on
the validity of Conjecture~\ref{conj:large-image} used to discard
trivial solutions). Assuming that our numerical experiments succeed
 in
finding all ghost solutions, we deduce that for any pair of exponents
$(q,r)$ with $3 < r < q \le 13$ either there are no ghost solutions,
or their conductor do not match the conductor of a real solution. We
alert the reader that the dimension of the space of Hilbert modular
forms for exponents different from the one studied in the present
article (namely $q=5$ and $r=3$) is too large, making the newform
decomposition infeasible with our electronic resources.

We present the ghost solutions obtained for different exponents
$(q,r)$ in the range $3 \le r < q \le 13$ except the case $(5,3)$ that
will be studied in detail later. For each value of $(q,r)$, let
$\id{q}$ (respectively $\id{r}$) be a prime ideal of $K$ dividing $q$
(respectively $r$). We include in the tables the valuation at $\id{q}$
and $\id{r}$ of the conductor of the attached $2$-dimensional
representation.

\subsubsection{The case $(q,r)=(7,3)$} We found the following solutions.
\begin{table}[H]
  \begin{tabular}{c|c|c|c|c|c|c|c}
  	
    $t_0$ & $-\frac{1}{8}$ & $\frac{1}{8}$ & $\frac{1}{64}$ & $\frac{7}{8}$ & $\frac{9}{8}$ & $\frac{63}{64}$ & $\frac{128}{125}$\\
    \hline
    $(v_{\id{q}}(N),v_{\id{r}}(N))$ & $(3,3)$ & $(3,6)$ & $(3,6)$ & $(3,9)$ & $(3,3)$ & $(3,9)$ & $(4,3)$
  \end{tabular}
  \end{table}
 Only Catalan's solutions appear in spaces of modular forms
 associated with solutions to~(\ref{eq:GFE}).

\subsubsection{The case $(q,r)=(7,5)$} We found the following two solutions.
\begin{table}[H]
  \begin{tabular}{c|c|c}
  	
    $t_0$ & $\frac{7}{32}$ & $\frac{25}{32}$\\
    \hline
    $(v_{\id{q}}(N),v_{\id{r}}(N))$ & $(3,9)$ & $(3,6)$
  \end{tabular}
  \end{table}
  None of them lie in a space coming from solutions to the generalized Fermat equation.

\subsubsection{The case $(q,r)=(11,3)$}  We found the following four solutions.
\begin{table}[H]
  \begin{tabular}{c|c|c|c|c}
    $t_0$ & $-\frac{3}{8}$ & $-\frac{1}{8}$ & $\frac{9}{8}$ & $\frac{11}{8}$\\
    \hline
    $(v_{\id{q}}(N),v_{\id{r}}(N))$ & $(4,8)$ & $(3,3)$ & $(3,3)$ & $(4,13)$
  \end{tabular}
  \end{table}
  The two middle values do lie in an allowed conductor, so the classical elimination step will fail to prove non-existence of solutions.
  \subsubsection{The case $(q,r)=(11,5)$} We found a unique solution
  corresponding to $t_0=\frac{2048}{2673}$. Its conductor valuation at
  a prime ideal dividing $5$ is $2$, while its conductor valuation at
  a prime dividing $11$ is $8$. Then it does not lie in any space
  related to solutions to (\ref{eq:GFE}).
\subsubsection{The case $(q,r)=(11,7)$}  We found the following two solutions.
\begin{table}[H]
  \begin{tabular}{c|c|c}
    $t_0$ & $\frac{7}{128}$ & $\frac{121}{128}$ \\
    \hline
    $(v_{\id{q}}(N),v_{\id{r}}(N))$ & $(6,3)$ & $(6,13)$ 
  \end{tabular}
  \end{table}
  Both conductors do not match the conductor of a newform coming from a solution to (\ref{eq:GFE}).
\subsubsection{The case $(q,r)=(13,3)$}  We found only the two solutions.
\begin{table}[H]
  \begin{tabular}{c|c|c}
    $t_0$ & $-\frac{1}{8}$ & $\frac{9}{8}$ \\
    \hline
    $(v_{\id{q}}(N),v_{\id{r}}(N))$ & $(2,3)$ & $(3,3)$ 
  \end{tabular}
  \end{table}
  They both have conductors matching conductors of real solutions to
  the generalized Fermat equation.  
  
  \vspace{7pt}
  
  Notice that when $r=3$ and
  $t_0=-\frac{1}{8}$ or $t_0=\frac{9}{8}$, the conductor exponent at $\id{q}$ and $\id{r}$
  matches the value computed in Lemma \ref{lemma:Catalan-3} and Lemma \ref{lemma:Catalan-3-9/8}, respectively. 
  
  \vspace{5pt}
  We did not find any solution for the exponents $(q,r)=(13,5), (13, 7)$ nor $(13,11)$.
  
%
%
\section{Some results on residual images}
%
%


Keeping the previous notation, let $\id{p}$ be a prime ideal of
$K=\Q(\zeta_q)^+\cdot\Q(\zeta_r)^+$, and let $\rho_{\id{p}}^{\pm}$ denote
the $\id{p}$-th Galois representation attached to the motive
$\mot^{\pm}$, as in (\ref{eq:gal-rep}).  As explained in the
introduction, the third step of the modular method (needed to apply a
level lowering result) depends on the residual representation
$\bar{\rho}^\pm_{\id{p}}$ being absolutely irreducible.  Since $K$ is
a totally real field, this is equivalent to being irreducible. In
this direction, let us recall the following result, which is
\cite[Corollary 1]{MR3690598}.

\begin{thm}\label{thm:irredasym}
  Let $K$ be a totally real field, $\id{q}$ a prime ideal of $K$ and
  $g$ a positive integer. Then there exists a constant $C(K,g,\id{q})$
  such that for all primes $p>C(K,g,\id{q})$ and all $g$-dimensional
  abelian varieties $A/K$ satisfying:
  \begin{enumerate}[(i)]
  \item $A$ has potentially good reduction at $\id{q}$,
  \item $A$ is semistable at the primes of $K$ dividing $p$,
  \item $A$ is of $\GL_2$-type with multiplications by some totally
    real field $F$,
  \item all endomorphisms of $A$ are defined over $K$ (i.e.\
    $\End_K(A) = \End_{\bar{K}}(A)$),
  \end{enumerate}
  the residual representation $\bar{\rho}_{A,\id{p}}$ is irreducible.
\end{thm}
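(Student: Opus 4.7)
The plan is to argue by contradiction, following the general Mazur--Momose--David strategy adapted to totally real base fields. Assume $\bar{\rho}_{A,\id{p}}$ is reducible; after semisimplification it decomposes as $\bar{\chi}_1 \oplus \bar{\chi}_2$ for characters $\bar{\chi}_i \colon \Gal_K \to \overline{\F}_p^\times$. Since $A$ is of $\GL_2$-type with multiplications by a totally real field $F$ (hypothesis~(iii)), the determinant of $\bar{\rho}_{A,\id{p}}$ is the mod-$p$ cyclotomic character, so $\bar{\chi}_1 \bar{\chi}_2 = \bar{\chi}_{\text{cyc}}$ and the $\bar{\chi}_i$ have opposite Hodge--Tate--like weights at primes above $p$.

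The next step is to pin down the ramification of each $\bar{\chi}_i$. At the prime $\id{q}$, hypothesis~(i) combined with Grothendieck's monodromy theorem shows that $I_{\id{q}}$ acts on the $p$-adic Tate module of $A$ through a finite quotient whose order is bounded purely in terms of $g$ and $\normid{q}$; hence for $p$ larger than this bound the character $\bar{\chi}_i|_{I_{\id{q}}}$ is trivial. At primes $v$ above $p$, hypothesis~(ii) together with Raynaud's classification of finite flat group schemes of type $(p,\dots,p)$ implies that $\bar{\chi}_i|_{I_v}$ is either trivial or equal to $\bar{\chi}_{\text{cyc}}|_{I_v}$. Hypothesis~(iv) enters here essentially, since it forbids hidden inner twists over a quadratic extension that could artificially split the representation: it guarantees that $\bar{\chi}_1, \bar{\chi}_2$ are genuine characters of $\Gal_K$, and that after multiplying by an appropriate power of $\bar{\chi}_{\text{cyc}}$ each one has finite order with ramification supported on a set of primes bounded in terms of $K$, $g$, and $\id{q}$. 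By class field theory this produces a finite, effectively computable list of candidate pairs $(\bar{\chi}_1, \bar{\chi}_2)$.

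Finally, I would close the argument using Frobenius traces and the Weil bound. For a prime $\lambda$ of $K$ of good reduction for $A$ and coprime to $p\cdot\id{q}$, the Frobenius trace $a_\lambda \in \Om_F$ satisfies $|\iota(a_\lambda)| \le 2g\sqrt{\normid{\lambda}}$ for every real embedding $\iota$ of $F$, while modulo $\id{p}$ it reduces to $\bar{\chi}_1(\Frob_\lambda) + \bar{\chi}_2(\Frob_\lambda)$. For each candidate pair from the previous step, Chebotarev's density theorem provides a prime $\lambda$ at which the right-hand side lifts to an algebraic integer incompatible with the Weil bound once $p$ is large, and taking the maximum of all resulting bounds produces the constant $C(K,g,\id{q})$. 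The main obstacle, and the technical heart of the argument in \cite{MR3690598}, is precisely the globalization in the preceding paragraph: because the conductor of $A$ at places outside $\id{q}$ and $p$ is not controlled a priori, one has to leverage hypothesis~(iv) together with a careful analysis of the field of definition of the characters to force each $\bar{\chi}_i$ to descend from a Hecke character of small, effectively bounded conductor, even though $A$ itself may be ramified at arbitrarily many other primes.
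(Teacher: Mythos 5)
The paper itself does not prove Theorem~\ref{thm:irredasym}; it simply cites \cite[Corollary 1]{MR3690598}, so there is no internal argument to check your sketch against. Your outline does follow the correct general strategy (the Mazur/Kraus-style reducibility criterion adapted to $\GL_2$-type abelian varieties over totally real fields), and one can even recognize the skeleton of the resultant bound that the present paper quotes in its proof of Corollary~\ref{coro:large-image-5p3}. However, several of the intermediate claims are either wrong as stated or elide the step that actually carries the argument.

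First, the conclusion at $\id{q}$ is overstated. Grothendieck's monodromy theorem plus hypothesis~(i) shows that $I_{\id{q}}$ acts on $T_p(A)$ through a finite quotient of order bounded in terms of $g$ alone, but for $p$ exceeding that bound this only gives that $\bar{\chi}_i|_{I_{\id{q}}}$ has order dividing this bound, \emph{not} that it is trivial: a bounded-order subgroup of $\overline{\F}_p^\times$ is perfectly well nontrivial for large $p$. The argument must therefore pass to a suitable bounded power of the characters. Second, and more seriously, your claim that ``each character has ramification supported on a set of primes bounded in terms of $K$, $g$, and $\id{q}$'' is not justified and is not a consequence of hypothesis~(iv). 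The theorem makes no assumption on the conductor of $A$ away from $\id{q}$ and $p$, so $A$ may have additive reduction at arbitrarily many primes and the characters $\bar{\chi}_i$ may be ramified at all of them. What is actually true, and what makes the proof work, is that at every $\lambda\nmid p$ the variety becomes semistable over an extension of degree bounded purely in terms of $g$, the unipotent inertia of the semistable model dies upon semisimplification, and hence $\bar{\chi}_i^N|_{I_\lambda}$ is trivial for some $N=N(g)$ — so $\bar{\chi}_i^N$ (after a cyclotomic twist dictated by the Raynaud analysis at $p$) is unramified everywhere and therefore factors through $\Cl(K)$, whence it has order dividing $h_K$. Hypothesis~(iv) is needed earlier, to guarantee that the $\Om_F$-eigencharacters are genuine characters of $\Gal_K$ rather than of a proper subgroup, not to bound the conductor support. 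Third, the closing step should evaluate at $\Frob$ above the fixed prime $\id{q}$ rather than at a prime produced by Chebotarev: potential good reduction at $\id{q}$ is precisely what guarantees, over the bounded extension $M/K_{\id{q}}$, a genuine Weil number whose trace satisfies the Hasse--Weil bound in terms of $\normid{q}^f$. A Chebotarev prime $\lambda$ chosen from the splitting behavior of the candidate characters need not be a prime of good (or even controlled) reduction for $A$, which is why the constant in the theorem depends on $\id{q}$ and not on an auxiliary prime.
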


Let $N^\pm$ be as in (\ref{eq:N-pm}). The following corollary is an application of Theorem~\ref{thm:irredasym}
to $\bar{\rho}^\pm_{\id{p}}$.

\begin{coro}
  \label{coro:large-image}
  There exists a constant $C(q,r)$ such that if $(a,b,c)$ is a
  primitive solution to (\ref{eq:GFE}) satisfying:
  \begin{enumerate}[(i)]
  \item $b$ is divisible by a prime $\ell \nmid N^{\pm}$,
  \item $r\nmid ab$ or $q\nmid bc$.
  \end{enumerate}
  Then, for every $p>C(q,r)$ and all $\id{p}\mid p$ in $K$, the residual representation $\bar{\rho}_\id{p}^\pm$ is absolutely irreducible.
  
\end{coro}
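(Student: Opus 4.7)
The plan is to apply Theorem \ref{thm:irredasym} to the abelian variety $A^{\pm}/K$ underlying the specialized motive $\mot^{\pm}$ (at $t_0=-a^q/c^r$). This variety is of $\GL_2$-type with multiplications by $\mathcal{O}_K$ and has dimension $g=[K:\Q]$; both quantities depend only on $(q,r)$. I will check the four hypotheses of Theorem \ref{thm:irredasym}, choosing the auxiliary prime $\id{q}$ of potentially good reduction in a way that depends only on $(q,r)$, so that the resulting constant $C(K,g,\id{q})$ can be renamed $C(q,r)$.

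The auxiliary prime is produced from hypothesis (ii) of the corollary. If $r\nmid ab$, then by Corollary \ref{coro:conductor-at-r} the conductor of $\mot^{\pm}$ at any prime $\id{r}$ of $K$ above $r$ is tame (valuation $2$ or $3$), so $A^{\pm}$ has potentially good reduction there; fix such $\id{q}=\id{r}$. The symmetric argument using Proposition \ref{prop:conductor-at-q} handles the case $q\nmid bc$, picking $\id{q}$ above $q$. This establishes condition (i) of Theorem \ref{thm:irredasym}. Condition (ii), semistability at the primes of $K$ above $p$, is automatic for $p\nmid qr$: by parts (1) and (2) of Theorems \ref{thm:mot-+-prop}--\ref{thm:mot---prop}, $A^{\pm}$ has good reduction at such primes when $p\nmid b$ and multiplicative reduction when $p\mid b$. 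Condition (iii) is built into the construction of $\mot^{\pm}$ as a $\GL_2$-type motive over the totally real field $K$, via the quotient of Euler's curve described in \S\ref{section:prel} (see \cite[Proposition 8.12]{GPV}).

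The main obstacle is condition (iv), namely $\End_K(A^{\pm})=\End_{\bar K}(A^{\pm})$, and this is where hypothesis (i) of the corollary is used. The existence of a prime $\ell\mid b$ with $\ell\nmid N^{\pm}$ forces $A^{\pm}$ to have multiplicative reduction at the primes of $K$ above $\ell$, again by part (2) of Theorems \ref{thm:mot-+-prop}--\ref{thm:mot---prop}. An abelian variety with complex multiplication has potentially good reduction at every prime (Serre--Tate, cf.\ \cite[Corollary 1]{MR236190}), so $A^{\pm}$ cannot be of CM type. For a non-CM $\GL_2$-type abelian variety with multiplications by the totally real field $K$, the standard classification of endomorphism algebras yields $\End_{\bar K}(A^{\pm})\otimes\Q=K=\End_K(A^{\pm})\otimes\Q$; after possibly replacing $A^{\pm}$ by an isogenous $K$-variety one obtains the integral equality required.

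With all four hypotheses verified and $K$, $g$, $\id{q}$ depending only on $(q,r)$, Theorem \ref{thm:irredasym} produces the constant $C(q,r):=C(K,g,\id{q})$. For every $p>C(q,r)$ and every prime $\id{p}\mid p$ of $K$, the residual representation $\bar{\rho}_{\id{p}}^{\pm}$ is irreducible; since $K$ is totally real, irreducibility is equivalent to absolute irreducibility, completing the proof.
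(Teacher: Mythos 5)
Your proof follows essentially the same route as the paper: apply Theorem~\ref{thm:irredasym} to the abelian variety realizing $\mot^{\pm}$, using hypothesis~(ii) of the corollary to produce an auxiliary prime of potentially good reduction, the semistability clauses from Theorems~\ref{thm:mot-+-prop} and~\ref{thm:mot---prop} at primes above $p$, and hypothesis~(i) to produce a prime of multiplicative reduction.

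One imprecision worth flagging in your verification of condition~(iv): the assertion that a non-CM $\GL_2$-type abelian variety with multiplications by the totally real field $K$ necessarily has $\End_{\bar K}(A^{\pm})\otimes\Q = K$ is not quite right, since the classification of endomorphism algebras also allows a quaternion algebra over $K$. Ruling out CM is not enough; one must also rule out quaternionic multiplication. The fix is the same as for CM: an abelian variety with QM has potentially good reduction at every prime (cf.\ \cite[\S 4]{Jordan}, invoked in the same spirit in Corollary~\ref{coro:non-cm}), so the prime of multiplicative reduction produced by hypothesis~(i) kills this possibility as well. The paper's own proof is also rather terse on this step (it writes ``does not have complex multiplication and all its endomorphisms are defined over~$K$'' in one breath), so this is a shared compression rather than a flaw unique to your argument; still, your phrasing makes a claim that is false as stated, whereas the paper merely asserts the needed conclusions without the false intermediate claim. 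Finally, for completeness your constant $C(q,r)$ should also be taken larger than $q$ and $r$ (as the paper does) so that the semistability check at $\id{p}\mid p$ applies.
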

\begin{proof} Let $\id{q}$ and $\id{r}$ be prime ideals of $K$
  dividing $q$ and $r$ respectively. Let
  \[
    C(q,r) := \max\{C(K,\varphi(qr),\id{q}),C(K,\varphi(qr),\id{r}),q,r\},
  \]
 where the constants are those appearing in Theorem~\ref{thm:irredasym}.

  Let $(a,b,c)$ be a primitive solution to (\ref{eq:GFE}) and let $A$
  be the new part of $\Jac(\bfC^{\pm}(t_0))$.It is an abelian variety of $\GL_2$-type over $K$, of genus
  $\varphi(qr)=(q-1)(r-1)$. The variety $A$ is
  isogenous to the direct sum of the motive $\mot^{\pm}$ and its
  Galois conjugates. We need to verify that $A$ satisfies the
  hypotheses of Theorem \ref{thm:irredasym}.

  Suppose that $r\nmid ab$. Then Remark~\ref{remark:potentially-good}
  (and its analogue for primes larger than $3$) implies
  that $A$ has potentially good reduction at $\id{r}$. When
  $q \nmid bc$ a similar reasoning applies to the prime ideal
  $\id{q}$, proving condition $(i)$.

  Let $\id{p}$ be a prime ideal of $K$ dividing
  $p\neq q, r$. Theorems~\ref{thm:mot-+-prop} and \ref{thm:mot---prop} imply
  that if $\id{p} \nmid b$, $A$ has good reduction at $\id{p}$,
  otherwise, it has multiplicative reduction. In both cases it is
  semistable, so condition $(ii)$ is satisfied.
  
  The first hypothesis implies that there exists a prime
  $\ell\neq q,r$ (and also $\ell \neq 2$ in the case of $\mot^-$)
  dividing
  $b$. Let $\id{l}$ be a prime in $K$ dividing $\ell$. By
  Theorem~\ref{thm:mot-+-prop} (respectively
  Theorem~\ref{thm:mot---prop}) the prime $\id{l}$ is of
  multiplicative reduction for $\mot^+$ (respectively $\mot^-$), so
  the motive (and $A$) does not have complex multiplication and all
  its endomorphisms are defined over $K$.  The result now follows from
  Theorem \ref{thm:irredasym} and our definition of $C(q,r)$.
\end{proof}

\begin{remark} \label{rem:first-condition-irred} The first condition is used to ensure that $A$ does not have complex multiplication
	(see
  Proposition~\ref{prop:non-cm}). There are only finitely
  many specializations coming from non-trivial primitive solutions of
  the motive $\HGM^{\pm}(t)$ where the variety has complex
  multiplication (see \cite{MR971994} and \cite{MR1973057}). Then for $p$ large enough
  (with a bound that is ineffective) the first condition can be
  removed.
\end{remark}

\section{The equation $x^5 + y^p + z^3 = 0$}
In this section we apply the strategy developed in \cite{GP} and the results of
the previous sections to study primitive solutions to the equation
\begin{equation}\tag{\ref{eq:5p3}}
  x^5+y^p+z^3=0.
\end{equation}
\subsection{A hyperelliptic realization of the motive}
In \S \ref{section:prel} we explained how to obtain a curve whose
Jacobian realizes our motive $\HGM^{\pm}$. For $q=5$ and $r=3$, the quotient of Euler's curve $\bfC^+$ by the three
described involutions has genus $2$, hence is hyperelliptic and an
explicit equation for it can be computed. Euler's curve (\ref{eq:Euler+}) has equation
\begin{equation}
  \label{eq:euler-3p5}
  \bfC^+(t): y^{15} = x^2(1-x)^{-8} (1-tx)^{8}t^{-3}.    
\end{equation}
The curve has genus $14$. The old contribution to its Jacobian comes from the curves
\begin{equation}
  \label{eq:old-curves}
\bfC_5(t):y^5 = x^2(1-x)^{-8} (1-tx)^{8}t^{-3} \quad \text{and} \quad \bfC_3(t):y^3= x^2(1-x)^{-8} (1-tx)^{8}t^{-3},  
\end{equation}
of genus $4$ and $2$ respectively. The \emph{new} part of
$\Jac(\bfC^+)$ is an abelian variety of dimension $8$ with an action
of $\Z[\zeta_{15}]$ (of $\GL_2$-type over $F=\Q(\zeta_{15})$). To
compute quotients of curves by involutions in \texttt{Magma} we need
to use  a projective model. To keep degrees as small as
possible, we change variables $y \to y\cdot(1-x)$ and consider the projective model
\[
\bfC^+(t):y^{15}z^2=x^2(z-x)^7(z-tx)^8t^{-3}.
\]
Keeping the notation of \S \ref{section:prel}, an involution of the
curve (in the new variables) is given by the map
\[
  \iota_{-1}(x,y,z)=\Big(\frac{yz}{t},\frac{(x-z)(z-tx)}{t},xy\Big).
\]
We can define the involution in \texttt{Magma}
with the following commands:
\begin{verbatim} 
FF<t> := FunctionField(Rationals());
A<x,y,z> := ProjectiveSpace(FF,2);
C := Curve(A,y^15*z^2-x^2*(z-x)^7*(z-t*x)^8/t^3);
phiAmb := map<A->A|[y*z/t,(x-z)*(z-t*x)/t,x*y]>;
phi := Restriction(phiAmb,C,C);  
\end{verbatim}
Let $j=4$ (so $j \equiv -1 \pmod 5$ and $j \equiv 1 \pmod 3$). Then
the map (\ref{eq:inv1}) in the new variables is given by 
\[
\iota_4(x,y,z)=\left(\frac{x-z}{tx-z},\frac{(t-1)(z-x)^2(tx-z)x}{ty^4},1\right).
\]
It is another involution of $\bfC^+$. We define it in \texttt{Magma} by
\begin{verbatim}
phiAmb4 := map<A->A|[(x-z)/(t*x-z),(t-1)*(z-x)^2*(t*x-z)*x/t/y^4,1]>;
phi4 := Restriction(phiAmb4,C,C); 
\end{verbatim}
%
%
%
Then we can compute the quotient of $\bfC^+$ by both involutions (with
a little patience, since the computation takes around 10 minutes).
\begin{verbatim}
Mp := iso<C->C|DefiningEquations(phi),DefiningEquations(phi)>;
Mp4 := iso<C->C|DefiningEquations(phi4),DefiningEquations(phi4)>;
G := AutomorphismGroup(C,[Mp4,Mp]);
D,map := CurveQuotient(G);
\end{verbatim}
The output (available in \cite{github}) is a hyperelliptic curve isomorphic to 
\begin{equation}
  \label{eq:hyperell-5-p-3}
\bfC_{5,3}^+(t): y^2 = 5x^6 - 12x^5 + 10tx^3 + t^2.
\end{equation}
Its Jacobian over $K=\Q(\sqrt{5})$ has $\Q(\sqrt{5})$ as its algebra of
endomorphisms. There is a remarkable coincidence here: the quotient of
$\bfC^+$ by the group generated by the two involutions has genus
$2$. In general, only the new part of the quotient has genus $[K:\Q]$,
but in this case there is no contribution from the old parts
(corresponding to the quotients of the curves $\bfC_3$ and $\bfC_5$ in
(\ref{eq:old-curves}) by the same involutions). This is not the case
for the curve $\bfC^-$. We can compute the quotient curve, but it is a
curve of genus $5$. It is not clear to us how to get an equation for
the new part of it.

\medskip

In the recent paper \cite{ChenVillagra}, the authors study the
local conductor at the even place of the genus $2$
hyperelliptic curve 
\[
  C(t) : y^2 + y(x^3+t(1-t)^2) = 2t(1-t)^2x^3 + 3t^2(1-t)^3x + t^2(1-t)^4.
\]
The curve is related to solutions of the generalized Fermat equation 
\begin{equation}
  \label{eq:fermat-3-5-p}
x^3+y^5=z^p.  
\end{equation}
Concretely, if $(a,b,c)$ is a solution to (\ref{eq:fermat-3-5-p}),
then specializing at $t_0=a^3/c^p$ gives the curve
\[
  y^2 + y(x^3 + b) = 2bx^3 + 3ax + b^2,
\]
which seems to be suitable for the modular method. The hyperelliptic
curve $C(t)$ was obtained through a reverse engineering process, and
none of its expected properties were not addressed by the authors
\begin{prop}
  \label{prop:comparison}
  The hyperelliptic curves $\bfC_{5,3}^+((t-1)/t)$ and $C(t)$ are
  isomorphic.
\end{prop}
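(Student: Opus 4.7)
The plan is to produce an explicit birational isomorphism between the two genus $2$ curves by writing both in Weierstrass form $Y^2 = f(x)$ with $\deg f = 6$ and then exhibiting a chain of elementary transformations that identifies the two sextics. Up to rescaling and Möbius transformations in $x$, two such curves are isomorphic if and only if the associated binary sextics are in the same $\GL_2$ orbit, so the proof reduces to a direct computation with polynomial coefficients.

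First I would bring $C(t)$ into pure Weierstrass form. Completing the square, i.e. replacing $y$ by $(Y - x^3 - t(1-t)^2)/2$, turns the defining equation of $C(t)$ into
\[
  Y^2 \;=\; x^6 + 10\,t(1-t)^2 x^3 + 12\,t^2(1-t)^3 x + 5\,t^2(1-t)^4.
\]
On the other side, substituting $s = (t-1)/t$ in $\bfC^{+}_{5,3}(s)$: $y^2 = 5s^{2} - 12s^{2}\cdot(\text{nothing})\dots$ I mean, substituting $s=(t-1)/t$ in \eqref{eq:hyperell-5-p-3} and clearing denominators via $Y = t y$ gives
\[
  Y^2 \;=\; 5t^{2} x^{6} - 12 t^{2} x^{5} + 10\, t(t-1)\, x^{3} + (t-1)^{2}.
\]

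Next I apply the hyperelliptic involution $(x,Y)\mapsto (1/x,\, Y/x^{3})$, which sends a sextic $f(x)$ to $x^{6}f(1/x)$. This produces
\[
  Y^2 \;=\; (t-1)^2 x^6 + 10\,t(t-1)\, x^3 - 12\, t^2 x + 5\, t^2.
\]
Finally, the affine change $x \mapsto x/(t-1)$ together with $Y \mapsto Y/(1-t)^{2}$ transforms this equation into exactly the completed-square form of $C(t)$ written above. Reversing the completion of the square then identifies the result with $C(t)$, giving the sought isomorphism.

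All steps are defined over $\Q(t)$ (the only denominators introduced are powers of $t$ and $t-1$, which are units in the function field), so the isomorphism is defined over $\Q(t)$. The only potential pitfall is a bookkeeping error in comparing the coefficients after the Möbius/scaling step, so the verification should be done by matching the coefficients of $x^{6}, x^{3}, x^{1}$ and $x^{0}$ one by one; the disappearance of the $x^{5}$ term (which is absent on the $C(t)$ side) is automatic after applying the involution $x\mapsto 1/x$, and this is the clean conceptual explanation for why the two models become comparable at all.
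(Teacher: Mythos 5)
Your proof is correct and follows essentially the same route as the paper's: in both cases the key moves are the substitution $t\mapsto(t-1)/t$, the inversion $(x,y)\mapsto(1/x,y/x^{3})$ (which is what removes the $x^{5}$ term), and an affine rescaling in $x$ and $y$ by powers of $t$ and $t-1$. The only organizational difference is that you first put both curves into pure Weierstrass form $Y^{2}=f(x)$ and match the sextics, while the paper keeps $C(t)$ in Tate form $y^{2}+yh(x)=g(x)$ and matches there; these are interchangeable. One small point to watch in the write-up: in the final step, ``$x\mapsto x/(t-1)$, $Y\mapsto Y/(1-t)^{2}$'' must be read as replacing the symbols $x,Y$ in the equation by $x/(t-1)$ and $Y/(1-t)^{2}$ (equivalently, the isomorphism on points sends $(x,Y)$ to $((t-1)x,\,(1-t)^{2}Y)$); with the opposite convention the coefficients do not match. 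With that reading, a coefficient-by-coefficient check confirms $x^{6}+10t(1-t)^{2}x^{3}+12t^{2}(1-t)^{3}x+5t^{2}(1-t)^{4}$ on both sides (note $-12t^{2}(t-1)^{3}=12t^{2}(1-t)^{3}$), so the argument goes through.
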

\begin{proof}
  Applying the change of variables $(x,y)\to{(1/x,y/x^3)}$ for the
  curve $\bfC_{5,3}^+(t)$, followed by the change of variable
  $y\to 2y + (tx^3+1)$, we get the curve
  \begin{equation*}
    y^2 + y(tx^3+1)=2tx^3-3x+1.
  \end{equation*}
  Substituting $t$ by $(t-1)/t$ and applying the change of variables
  $(x,y)\to(-x/(1-t),y/(t(1-t)^2))$ we obtain the model $C(t)$.
\end{proof}
\begin{remark}
  Theorem~\ref{thm:trace-frob} gives a formula to compute the trace of
  a Frobenius element $\Frob_{\id{p}}$ for $\id{p}$ a prime of
  $\Q(\zeta_{15})$ not dividing $15$ (not for primes in
  $K=\Q(\sqrt{5})$). Equation (\ref{eq:hyperell-5-p-3}) allows us to
  compute Frobenius traces over $K$.
\end{remark}
To lower the number of possibilities for the conductor of $\HGM^+$,
let us prove the veracity of Proposition~\ref{prop:congruences} over
$K$.
\begin{lemma}
  \label{lemma:congruences}
  Let $t_0 \in \Q\setminus\{0,1\}$. Then the following congruences hold over $K$.
  \begin{itemize}
  \item The motive
    $\HGM^+(t_0) \equiv
    \HGM((1,1)(\frac{1}{5},-\frac{1}{5})|t_0)\pmod{3}$.
  \item The motive
    $\HGM^+(t_0) \equiv
    \HGM((\frac{1}{3},-\frac{1}{3}),(1,1)|t_0)\pmod{(\sqrt{5})}$.
  \end{itemize}
\end{lemma}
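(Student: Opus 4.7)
The starting point is the corresponding congruences over $F = \Q(\zeta_{15})$. Proposition \ref{prop:congruences} gives directly the second congruence modulo a prime of $F$ above $5$, and the same argument — a specialization of \cite[Theorem 10.3]{GPV} — applied to a prime of $F$ above $3$ yields the first congruence over $F$. Each such congruence, after restriction of the Galois representations to $\Gal_K$, descends to a congruence of $K$-motives only up to a quadratic twist, by Remark \ref{remark:twist}. Crucially, the cited \cite[Corollary 2.2]{2502.02776} asserts that this twist depends only on the signature $(q,r) = (5,3)$ and not on $t_0$. The lemma therefore reduces to showing that the twist is trivial for a single convenient specialization.

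My plan is to pick one such value, for instance $t_0 = 2$, and compare Frobenius traces over $K$. Three geometric realizations are available: the hyperelliptic curve $\bfC_{5,3}^+(t_0)$ from (\ref{eq:hyperell-5-p-3}), whose Jacobian realizes $\mot^+$ over $K$; the elliptic curve $E_3^+(t_0)$ from (\ref{eq:r=3-curves}), which realizes $\HGM((\frac{1}{3},-\frac{1}{3}),(1,1)|t_0)$; and Darmon's hyperelliptic curve $C_5^+(1/t_0)$ of (\ref{eq:Darmon-hyper}), which by Proposition \ref{prop:darmon-hyper-relation} together with (\ref{eq:ismorphism-inv}) realizes $\HGM((1,1),(\frac{1}{5},-\frac{1}{5})|t_0)$.

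At a handful of primes $\idK$ of $K$ of small norm, not dividing $15 \cdot t_0(t_0-1)$, the next step is to compute $a_\idK$ for each of these three objects and verify
\[
a_\idK\bigl(\Jac(\bfC_{5,3}^+(t_0))\bigr) \equiv a_\idK\bigl(C_5^+(1/t_0)\bigr) \pmod{3}, \qquad a_\idK\bigl(\Jac(\bfC_{5,3}^+(t_0))\bigr) \equiv a_\idK\bigl(E_3^+(t_0)\bigr) \pmod{(\sqrt{5})}.
\]
A putative non-trivial twist would be a quadratic character of $\Gal_K$ unramified outside $\{3,5,\infty\}$, and such characters form a finite explicitly enumerable group; each non-trivial one takes the value $-1$ at some prime of small norm, so checking a short list of primes $\idK$ suffices to rule them all out.

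The real conceptual input is packaged into \cite[Corollary 2.2]{2502.02776}, guaranteeing $t$-independence of the twist. Modulo that input, the remainder is a short explicit verification that fits naturally within the \texttt{Magma}/\texttt{PARI} infrastructure already developed for the paper; I expect no further obstacle.
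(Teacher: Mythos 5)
Your proposal follows the same strategy as the paper's proof: invoke the congruence over $F=\Q(\zeta_{15})$, use the $t$-independence of the quadratic twist from \cite[Corollary 2.2]{2502.02776} to reduce to a single specialization, and compare Frobenius traces of the three geometric realizations ($\bfC_{5,3}^+(t_0)$, $E_3^+(t_0)$, and $C_5^+(1/t_0)$) at a few primes of $K$ of small norm to eliminate the three nontrivial characters of $\Gal(\Q(\zeta_{15})/\Q(\sqrt{5}))$. The paper uses $t=3$ and primes above $\{7,11,29\}$, chosen so that the three characters $\theta_{-3}$, $\theta_{\zeta_5}$, $\theta_{6\sqrt{5}+15}$ take the sign patterns $(+,-,-)$, $(-,+,-)$, $(-,-,+)$ respectively; the only point you gloss over slightly is that for a prime to actually discriminate a nontrivial twist $\theta$ with $\theta(\idK)=-1$, one must also have $a_\idK\not\equiv -a_\idK$ modulo the relevant prime ($3$ or $(\sqrt{5})$), a condition the paper's choices verify explicitly.
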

\begin{proof}
  By Remark~\ref{remark:twist} the statement is true up to a quadratic
  twist by a character of $\Gal(\Q(\zeta_{15})/\Q(\sqrt{5}))$, i.e.\ up
  to one of the following characters: 
  \begin{itemize}
  	\item $\theta_{-3}$, corresponding to
  	the extension $\Q(\sqrt{-3},\sqrt{5})/\Q(\sqrt{5})$;
  	\item   $\theta_{\zeta_5}$, corresponding to the extension
  	$\Q(\zeta_{5})/\Q(\sqrt{5})$; 
  	\item $\theta_{6\sqrt{5}+15}$, corresponding to the third subextension obtained by adding to $\Q(\sqrt{5})$ a square root
  	of $6\sqrt{5}+15$.
  \end{itemize}
 
 Since the twist is independent of $t$, it is
  enough to prove the claim for a particular specialization of the
  parameter, say $t=3$. Recall that $\HGM^+(3)(1)$ (the Tate twist of
  the hypergeometric motive; see  Remark~\ref{rem:Tate-twist}) matches the hyperelliptic curve
  \[
    \bfC_{5,3}^+(3): y^2 = 5x^6 - 12x^5 + 30x^3 + 9.
  \]
  By (\ref{eq:ismorphism-inv}) we have that  $\HGM((1,1),(\frac{1}{5},-\frac{1}{5})|3)\simeq \HGM((\frac{1}{5},-\frac{1}{5}),(1,1)|\frac{1}{3})$. By Proposition \ref{prop:darmon-hyper-relation}, the latter 
  corresponds to Darmon's hyperelliptic curve
  \[
    C_5^+(1/3): y^2=x^6 + 6x^5 - 45x^4 - 270x^3 + 405x^2 + 2592x + 972.
  \]
 The values of the characters $\theta_{-3}$, $\theta_{\zeta_5}$, and
 $\theta_{6\sqrt{5}+15}$ at a prime ideal $\idK$ of $K$ dividing
 $\{7,11,29\}$ are, respectively, $(+,-,-)$, $(-,+,-)$, and $(-,-,+)$.
 Therefore, if we prove that the congruence holds for $\Frob_{\idK}$ at
 each such $\idK$, then it cannot hold for any of the possible twists,
 yielding the desired result.
 
  The characteristic polynomials $P_\idK$ of $\Frob_{\idK}$
  for both curves $\bfC_{5,3}^+(3)$ and $C_5^+(1/3)$ are given in
  Table~\ref{table:char-pol} (they were computed using
  \texttt{Magma}).
  \begin{table}[H]
    \label{table:char-pol}
  \begin{tabular}{c|c|c}
    $\idK$ & $P_{\idK}(\bfC_{5,3}^+(3))$ & $P_{\idK}(C_5^+(1/3))$\\
    \hline
    $7$ & $x^4 + 10x^3 + 123x^2 + 490x + 2401$ & $x^4 - 20x^3 + 198x^2 - 980x +   2401$\\
    \hline
    $(\frac{1+3\sqrt{5}}{2})$ & $x^4 - x^3 + 21x^2 - 11x + 121$ & $x^4 + x^3 + 21x^2 + 11x + 121$\\
    \hline
    $(\frac{11+\sqrt{5}}{2})$ &$x^4 - 2x^3 + 14x^2 - 58x + 841$ & $x^4 + 5x^3 + 53x^2 + 145x + 841$\\
    \hline
  \end{tabular}
  \caption{Characteristic polynomials of Frobenius.}
  \end{table}
  Recall that we need to add a Tate twist to the second column or,
  equivalently, replace $x$ by $\norm(\idK)x$. The twisted polynomials
  of the second column match those of the third column modulo $3$,
  proving the first statement. To prove the second statement, we use
  that, by Proposition \ref{prop:darmon-hyper-relation},
  $\HGM((\frac{1}{3},-\frac{1}{3}),(1,1)|3)$ corresponds to the
  elliptic curve
  \[
   C_3^+(3)\simeq E_3^+(3):y^2+3xy+3y=x^3.
  \]
  The characteristic polynomial of the Frobenius endomorphism $\Frob_{\idK}$ acting on
  $E_3^+(3)$ for the same prime ideals $\idK$ as before are
  \[
    x^2 + 10x + 49, \qquad x^2 - 3x + 11, \qquad x^2 + 6x + 29.
  \]
 The square of each of these polynomials matches the twisted polynomials
 in the second column of Table~\ref{table:char-pol} modulo $5$. See the
 script in \cite{github} for a verification of these computations.
\end{proof}

\begin{prop}
  \label{prop:Igusa}
  Let $\ell\in\{3,5\}$ and $t \in \QQ\setminus \{0,1\}$. If $v_\ell(t-1)> 3$
  the Jacobian of $\bfC_{5,3}^+(t)$ has potentially multiplicative
  reduction at $\ell$. Moreover, if $v_\ell(t-1)=0$ and also
  $v_3(17t+108)\le 3$ when $\ell=3$, then it has potentially good
  reduction at $\ell$.
\end{prop}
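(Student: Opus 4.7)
The plan is to use Igusa--Clebsch invariants as the main tool, following Liu's refinement of Igusa's classification of reduction types of genus $2$ curves. First I would compute the Igusa--Clebsch invariants $I_2(t), I_4(t), I_6(t), I_{10}(t)$ of the sextic
\[
F_t(x) = 5x^6 - 12x^5 + 10tx^3 + t^2
\]
as polynomials in $t$ (this is an entirely mechanical \texttt{Magma} calculation that can be cross-checked by hand). Once these are in hand, the proof reduces to tracking the $\ell$-adic valuations of these polynomials and of the three absolute invariants
\[
j_1 = \frac{I_2^5}{I_{10}}, \qquad j_2 = \frac{I_2^3 I_4}{I_{10}}, \qquad j_3 = \frac{I_2^2 I_6}{I_{10}},
\]
since these are invariants of the isomorphism class of the Jacobian over $\overline{K}$.

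For the potentially multiplicative statement, I would invoke the criterion (see Liu) that $\Jac(\bfC_{5,3}^+(t))$ has potentially multiplicative reduction at $\ell$ precisely when $v_\ell(j_i) < 0$ for some $i$ (equivalently, $v_\ell(I_{10})$ exceeds the corresponding power of $v_\ell(I_2)$, $v_\ell(I_4)$ or $v_\ell(I_6)$). Concretely, I would factor $I_{10}(t)$ and identify $(t-1)$ as one of the irreducible factors, with some positive exponent $m$; the claim is that for $\ell\in\{3,5\}$ and $v_\ell(t-1)>3$, the contribution of $(t-1)^m$ to $v_\ell(I_{10})$ dominates the contributions to $v_\ell(I_2^5)$, forcing $v_\ell(j_1)<0$. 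The threshold ``$>3$'' should appear precisely from balancing the exponent $m$ against the other factors of $I_{10}$ and the valuations of the leading coefficient $5$ and of small constants that enter $I_2,\dots,I_{10}$.

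For the potentially good reduction statement, I would apply the dual criterion that good reduction over a finite extension holds iff $v_\ell(j_1),v_\ell(j_2),v_\ell(j_3)\ge 0$. The hypothesis $v_\ell(t-1)=0$ removes the $(t-1)$-factor of $I_{10}$ from playing any role. For $\ell=5$ the numerator factors of $I_2^a I_4^b I_6^c$ should absorb the remaining $5$-adic contributions of $I_{10}$ automatically. For $\ell=3$ the issue is that $I_{10}$ has an additional factor, which (after factoring explicitly) I expect to be $17t+108$ up to a unit at $3$; the hypothesis $v_3(17t+108)\le 3$ then keeps $v_3(j_i)\ge 0$ by a direct numerical comparison of exponents, because $108 = 4\cdot 27$ already contributes $v_3=3$. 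With the invariants in explicit form both assertions boil down to integer arithmetic with the $\ell$-adic valuations of the coefficients appearing in $I_2,\dots,I_{10}$.

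The main obstacle is the bookkeeping: the Igusa--Clebsch polynomials in $t$ are long, and one must carefully separate the factors of each $I_{2k}(t)$ into a ``generic'' part (whose $\ell$-adic valuation is bounded independently of $t$) and a ``distinguished'' part involving $t-1$ (or $17t+108$ at $\ell=3$). A secondary subtlety is that for $\ell=5$ the leading coefficient of $F_t$ vanishes modulo $\ell$, so one must either work with a change of variables that moves the point at infinity into an affine model with unit leading coefficient, or directly use the covariant nature of $I_{2k}$ under the twist by the leading coefficient; I expect the former to be the cleaner route to verify the exact numerical thresholds $>3$ and $\le 3$.
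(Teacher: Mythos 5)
Your plan correctly identifies the tool (Igusa-type invariants and Liu's criterion), and the outline of the bookkeeping is on the right track, but there is a substantive gap: you are applying the criterion for potentially good reduction of the \emph{curve}, not of the \emph{Jacobian}, and these do not coincide in genus~$2$. Liu's Theorem~1 says $\Jac(\bfC)$ has potentially good reduction if and only if \emph{one of two} conditions holds: (I) all $J_{2i}^5J_{10}^{-i}$ are $\ell$-integral (this is Igusa's integrality criterion and is equivalent to the curve itself acquiring good reduction), or (V) a separate set of valuation inequalities holds, corresponding to the case where the stable model of the curve degenerates to two elliptic curves with CM meeting at a point — so the curve has bad reduction, but its Jacobian is an abelian surface with potentially good reduction. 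Your criterion ``potentially good iff $v_\ell(j_i)\ge 0$ for all $i$'' only captures case~(I), and symmetrically your criterion ``potentially multiplicative iff some $v_\ell(j_i)<0$'' fails because the negation of (I) does not exclude (V).

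This is not a harmless omission here. In the paper's proof, for $\ell=3$ and $v_3(t-1)=0$ it is precisely case~(V) that holds (case~(I) fails), and the hypothesis $v_3(17t+108)\le 3$ is exactly what makes the case~(V) inequalities go through; the factor $17t+108$ you expected to find inside $I_{10}$ in fact enters via the case~(V) quantities $I_4^{\varepsilon}I_{2\varepsilon}^{-2}$, $J_{10}^{\varepsilon}I_{2\varepsilon}^{-5}$, etc., with $\varepsilon=3$ at $\ell=3$. Likewise the ``$>3$'' threshold in the potentially multiplicative assertion is obtained by verifying that \emph{both} (I) and (V) fail when $v_\ell(t-1)>3$; ruling out (I) alone does not suffice. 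So to repair your argument you would need to add the full case~(V) valuation checks, and for $\ell=3$ you should replace $I_2$ by $I_{2\varepsilon}$ with $\varepsilon=3$ throughout, as Liu's statement requires. One further small caveat: your three absolute invariants $I_2^5/I_{10}$, $I_2^3I_4/I_{10}$, $I_2^2I_6/I_{10}$ are not equivalent to Igusa's family $J_{2i}^5J_{10}^{-i}$ when $I_2$ vanishes or has high $\ell$-adic valuation, so even for case~(I) it is safer to check the full family as the paper does. Your worry about the leading coefficient $5$ at $\ell=5$ is unnecessary, since the $J_{2i}$ are invariants of the curve and not of the chosen sextic model.
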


\begin{proof}
The Igusa invariants of $\bfC_{5,3}^+(t)$ (see  \cite[page 623]{MR114819}) are given by 
\begin{align*}
	J_2 &= -1200t^2,\\
	J_4 & =-480000t^4,\\
	J_6 & = 43520000t^6 + 276480000t^5,\\
	J_8 &= -70656000000t^8 - 82944000000t^7,\\
	J_{10} & = 2388787200000t^{10} - 4777574400000t^9 + 2388787200000t^8.
\end{align*}
Set $\varepsilon=1$ if $\ell=5$ and $\varepsilon=3$ if $\ell=3$.
By Theorem 1 of \cite{Liu}, the Jacobian of $\bfC_{5,3}^+(t)$ has
potentially good reduction if and only if one of the two following conditions holds:
\begin{itemize}
\item [(I)] The values $J_{2i}^5J_{10}^{-i}$ are integral
  over $\Q_\ell$ for $1\le i\le 5$,
  
\item [(V)] All the following conditions hold:
  $I_4^\varepsilon I_{2\varepsilon}^{-2} \in \ell\Z_\ell$,
  $J_{10}^\varepsilon I_{2 \varepsilon}^{-5} \in \ell\Z_\ell$,
  $I_{12}^\varepsilon I_{2\varepsilon}^{-6} \in \ell\ZZ_\ell$,
  $I_4^{3\varepsilon}J_{10}^{-\varepsilon}I_{2\varepsilon}^{-1} \in
  \ZZ_\ell$ and
  $I_{12}^\varepsilon J_{10}^{-\varepsilon}I_{2\varepsilon}^{-1} \in
  \ZZ_\ell$, where $I_4=J_2^2-24J_4$, $I_2=J_2/12, I_6=J_6$ and $I_{12}=-2^3J_4^3+3^2J_2J_4J_6-3^3J_6^2-J_2^2J_8$.
\end{itemize}
The first case corresponds to a genus $2$ curve with good reduction,
while the second case corresponds to a genus $2$ curve whose special
fiber consists of two elliptic curves with complex multiplication
meeting at a single point (so the curve has bad reduction but its
Jacobian does not).

The values of (I) are respectively
\[
  -\frac{5^5t^2}{3(t-1)^2}, \quad -\frac{5^{10}t^4}{3^7(t-1)^4},\quad \frac{5^5t(17t+108)^5}{3^{18}(t-1)^6}, \quad -\frac{5^{10}t^3(23t+27)^5}{3^{19}(t-1)^8}, \quad 1.
\]
The values of (V) for $\ell=3$ are respectively
{\small\[
\frac{3^{12}5^4t^2}{(17t+108)^2},\quad \frac{3^{18}(t-1)^6}{5^5t(17t+108)^5},\quad \frac{3^{27}(t-1)^3(9t+16)^3}{(17t+108)^6},\quad \frac{3^{18}5^{17}t^7}{(17t+108)(t-1)^6},\quad \frac{3^95^5t(9t+16)^3}{(17t+108)(t-1)^3}.
\]}
For $\ell=5$, they equal
{\small\[
6^4 ,\quad -\frac{2^{10}3^6(t-1)^2}{5^5t^2},\quad \frac{2^{12}3^9(t-1)(9t+16)}{5^4t^2},\quad -\frac{2^23^65^5t^2}{(t-1)^2},\quad -\frac{2^23^35(9t+16)}{t-1}.
\]}
All of these computations can be checked in \cite{github}. Conditions (I) and (V) do not hold at $\ell=3, 5$ if $v_\ell(t-1)> 3$,
  proving the first claim. If $v_\ell(t-1)=0$ then condition (I) holds
  for $\ell=5$, while condition (V) holds for $\ell=3$, since $v_3(17t+108)\le 3$ by hypothesis.
\end{proof}
\begin{remark}\label{rem:pot-good-red}
  If $(a,b,c)$ is a hypothetical solution to~(\ref{eq:5p3}) and
  $t_0=-\frac{a^5}{c^3}$ then it is always the case that
  $v_3(17t_0+108)\le 3$. Indeed, if
  $17t_0+108 = \frac{3^n\alpha}{\beta}$, with $n \ge 4$ and
  $3 \nmid \beta$ then
  $t_0 = \frac{3^3(3^{n-3}\alpha-4\beta)}{17\beta}$. Since the
  numerator is a fifth power, $3 \mid \beta$, giving a contradiction. This implies that the
  last condition of Proposition \ref{prop:Igusa} is automatically satisfied for our Diophantine applications.
\end{remark}

\begin{coro}\label{coro:Curve-with-non-CM}
  Let $(a,b,c)$ be a non-trivial primitive solution to \eqref{eq:5p3} with
  $p>3$ and set $t_0=-\frac{a^5}{c^3}$. Then the Jacobian of the
  specialization $\bfC_{5,3}^+(t_0)$ does not  have complex nor quaternionic
  multiplication.
\label{coro:non-cm}
\end{coro}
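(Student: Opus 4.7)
The plan is to argue by contradiction. Suppose that $J:=\Jac(\bfC_{5,3}^+(t_0))$ acquires complex or quaternionic multiplication over $\bar K$. In either case, a classical theorem (Serre--Tate for CM, and its analogue due to Shimura/Morita for QM) ensures that $J$ has potentially good reduction at every prime of $K$. Since $J$ is of $\GL_2$-type with real multiplication by $\Om_K=\Z[\tfrac{1+\sqrt{5}}{2}]$, its $\idK$-adic Galois representations agree, up to a Tate twist, with those attached to the motive $\mot^+=\HGM^+(t_0)$; in particular $\mot^+$ inherits potentially good reduction at every prime of $K$.

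The first step is essentially Proposition \ref{prop:non-cm}: its proof invokes the CM hypothesis only to produce potentially good reduction at every prime, so the same argument applies here verbatim. Combined with Theorem \ref{thm:mot-+-prop}(2), which says $\mot^+$ has multiplicative reduction at every prime $\idK\nmid 15$ dividing $b$, we conclude that $b$ is divisible only by primes in $\{3,5\}$, so $b=\pm 3^{m}5^{n}$ for some $m,n\geq 0$.

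The second step is to exclude the cases $m\geq 1$ or $n\geq 1$ via Proposition \ref{prop:Igusa}. Assume $m\geq 1$; primitivity of $(a,b,c)$ forces $3\nmid c$, and using $a^5+b^p+c^3=0$ we obtain
\[
v_3(t_0-1) \;=\; v_3\!\left(\frac{b^p}{c^3}\right) \;=\; p\,m \;\geq\; p \;>\; 3.
\]
Proposition \ref{prop:Igusa} then gives potentially multiplicative reduction of $J$ at $3$, contradicting the standing hypothesis. The case $n\geq 1$ is identical after swapping $3$ and $5$.

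The main obstacle is the residual case $b=\pm 1$: then $v_\ell(t_0-1)\leq 0$ at both $\ell=3,5$ so Proposition \ref{prop:Igusa} gives no bad reduction, and $\mot^+$ is unramified outside $\{3,5,p\}$ by Theorem \ref{thm:mot-+-prop}(1), so potentially good reduction is a priori consistent with the data. I would dispose of this case by reducing it to the Mordell-type equation $a^5+c^3=\pm 1$ with $ac\neq 0$: since $\gcd(a,c)$ divides $1$, one is looking at non-trivial integer points of the genus-$4$ curves $x^5+y^3=\pm 1$, which Chabauty--Coleman or a Mordell--Weil sieve should rule out. Alternatively, for each putative $(a,c)$ one could verify directly via Theorem \ref{thm:trace-frob} that the Euler factor of $J$ at a well-chosen prime of good reduction is incompatible with the image of Frobenius on a CM or QM abelian surface.
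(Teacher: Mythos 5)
Your first two steps match the paper's proof: you use the CM/QM-implies-potentially-good-reduction argument (following the proof of Proposition~\ref{prop:non-cm}, with the QM case handled by the same reasoning, as the paper cites via \cite[\S 4]{Jordan}) to force $b=\pm 3^{m}5^{n}$, and then you apply Proposition~\ref{prop:Igusa} together with the valuation computation $v_\ell(t_0-1)=p\,v_\ell(b)>3$ (which relies on $v_\ell(c)=0$ by primitivity, exactly as you note) to exclude $m\ge 1$ and $n\ge 1$. That part is correct and is the same route the paper takes.

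The gap is in the residual case $b=\pm 1$. You correctly reduce it to the Diophantine equation $a^5+c^3=\pm 1$ with $ac\neq 0$, but you only \emph{propose} to handle it by Chabauty--Coleman, a Mordell--Weil sieve, or per-candidate Euler-factor checks; none of these is carried out, and the last suggestion is not even well-posed without first bounding the candidates. What you are missing is that $a^5+c^3=\pm 1$ with $ac\neq 0$ is precisely an instance of Catalan's equation (after moving one term across and absorbing signs, one gets $x^5-y^3=1$ with $x,y>1$, or symmetric variants), and Mih\u{a}ilescu's theorem \cite{MR2076124} states that the only solution of $x^p-y^q=1$ in integers $x,y,p,q>1$ is $3^2-2^3=1$, which does not occur with exponents $(5,3)$. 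The paper closes the case by citing exactly this; your proposal leaves it open, so as written the argument is incomplete.

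Two smaller remarks. First, your opening paragraph's claim that the $\idK$-adic representations of $J$ agree with those of $\mot^+$ up to Tate twist is fine for this signature, but it is not needed: Proposition~\ref{prop:Igusa} is stated directly for the Jacobian of $\bfC_{5,3}^+(t_0)$, so one can work with $J$ throughout without passing through the motive. Second, when invoking Proposition~\ref{prop:Igusa} at $\ell=3$ you should also check the hypothesis $v_3(17t_0+108)\le 3$; this is automatic for specializations coming from solutions of~\eqref{eq:5p3} by Remark~\ref{rem:pot-good-red}, which the paper cites alongside Proposition~\ref{prop:Igusa} for exactly this reason.
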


\begin{proof}
  It follows from Proposition~\ref{prop:non-cm} that $b$ can only be
  divisible by the primes $3$ and $5$. If $3 \mid b$ (respectively
  $5 \mid b$) then Proposition \ref{prop:Igusa} and  Remark \ref{rem:pot-good-red} implies that
  $\Jac(\bfC_{5,3}^+(t_0))$ has potential multiplicative reduction at
  $3$ (respectively 5), so it cannot have complex nor quaternionic multiplication (see \cite[\S 4]{Jordan}). Then $b = \pm 1$, and
  $(\mp a,\mp c)$ must be a solution to Catalan's equation
  \[
   x^5 - (-z)^3 = 1,
 \]
 which does not have any non-trivial solution (as proved in \cite{MR2076124}).
\end{proof}
The existence of a hyperelliptic model allows us to prove a stronger
irreducibility result.
\begin{coro}
  \label{coro:large-image-5p3}
  Let $\ell$ be a fixed prime. There exists a bound $C(\ell)$ such
  that the following holds.  If $(a,b,c)$ is a non-trivial primitive
  solution to (\ref{eq:5p3}) satisfying that $\ell\nmid b$ then for
  all primes $p > C(\ell)$ and all $\id{p}\mid p$ in $K$, the residual
  representation of ${\rho}_\id{p}^\pm$ is absolutely irreducible.
   Explicitly, $C(2)=13$, $C(3)=41363281$ and $C(5)=335809$.
\end{coro}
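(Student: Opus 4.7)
The plan is to combine Theorem \ref{thm:irredasym} with an explicit Mazur--Billerey-style effective argument, exploiting the hyperelliptic model $\bfC_{5,3}^+(t_0)$ and the favorable arithmetic of $K = \Q(\sqrt{5})$ (class number one, small unit group).

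First I would confirm the ineffective version. Set $A := \Jac(\bfC_{5,3}^+(t_0))$. By Corollary \ref{coro:Curve-with-non-CM}, $A$ has no complex nor quaternionic multiplication, so $\End_K(A) = \End_{\bar{K}}(A) = \Om_K$; it is of $\GL_2$-type over $K$, semistable at primes above $p$ (Theorems \ref{thm:mot-+-prop} and \ref{thm:mot---prop}), and potentially good at primes above $3$ and $5$ (Proposition \ref{prop:Igusa} and Remark \ref{rem:pot-good-red}). Theorem \ref{thm:irredasym} then yields irreducibility for all $p$ above an ineffective constant; the remaining task is to make the constant explicit.

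Next, suppose $\bar{\rho}_{\id{p}}^{\pm}$ is reducible. Since the determinant is $\cyclo|_{\Gal_K}$, the semisimplification splits as $\chi \oplus \chi^{-1}\cyclo$ for some character $\chi$ of $\Gal_K$. The hypothesis $\ell \nmid b$, combined with the conductor analysis of \S\ref{sec:conductor}, forces the conductor of $\chi$ away from $p$ to divide a fixed ideal supported on $\{2, 3, (\sqrt{5})\} \setminus \{\id{l} : \id{l} \mid \ell\}$, independent of the solution. Finite flatness at $p$ restricts the inertial character of $\chi$ at every prime above $p$ to either trivial or $\cyclo$. Since $\Cl(K) = 1$, local and global class field theory produce an explicit finite list $\mathcal{L}$ of admissible $\chi$.

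Third, I would choose auxiliary primes $\id{q}$ of $K$ above $\ell$ (primes of good reduction for $A$, by Theorems \ref{thm:mot-+-prop} and \ref{thm:mot---prop} together with $\ell \nmid b$). Reducibility forces the congruence
\[
a_{\id{q}} \equiv \chi(\Frob_{\id{q}}) + \normid{q}\,\chi^{-1}(\Frob_{\id{q}}) \pmod{\id{p}}
\]
for some $\chi \in \mathcal{L}$, where $a_{\id{q}}$ is computed by point counting on the explicit model $\bfC_{5,3}^+(t_0) \bmod \id{q}$. Running $t_0 = -a^5/c^3$ through primitive residues modulo $\id{q}$, the integer
\[
B(\id{q}) := \prod_{\chi \in \mathcal{L}} \prod_{t_0 \bmod \id{q}} \norm_{K/\Q}\bigl(a_{\id{q}}(t_0) - \chi(\Frob_{\id{q}}) - \normid{q}\,\chi^{-1}(\Frob_{\id{q}})\bigr)
\]
is nonzero and divisible by $p$. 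Combining several auxiliary primes via $\gcd$ produces the explicit $C(\ell)$: the small value $C(2) = 13$ reflects the stringent control over the $2$-adic local type afforded by $2 \nmid b$ (Proposition \ref{prop:cond-at-2}), which drastically shrinks $\mathcal{L}$; the larger values at $\ell = 3, 5$ reflect weaker control at the primes $(\sqrt{5})$ and $3$, respectively.

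The main obstacle will be step two, producing a list $\mathcal{L}$ as tight as possible, since any slack propagates multiplicatively into the final GCDs. A secondary difficulty is that for $\mot^{-}$ no explicit hyperelliptic realization is available (the relevant quotient has genus $5$); this can be handled either by using the congruence modulo $(\sqrt{5})$ between $\mot^{-}$ and a rational elliptic family, or by extracting traces symbolically from the genus-$5$ quotient computed via \texttt{Magma}.
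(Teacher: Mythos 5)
Your opening reduction to Theorem~\ref{thm:irredasym} via Corollary~\ref{coro:Curve-with-non-CM} matches the paper, and the character analysis in step two (determinant $\cyclo$, conductor bounded away from $p$, finite flatness, ray class computation over $K$ with $h_K=1$) is essentially the ingredient the paper extracts from its ray class group computation $\Z/4\Z\times\Z/2\Z$ to conclude $c_J\le 4$.

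The gap is in step three. You take $\id{q}\mid\ell$ and treat it as a prime of \emph{good} reduction for $A$, invoking Theorems~\ref{thm:mot-+-prop} and \ref{thm:mot---prop} together with $\ell\nmid b$. This fails for $\ell\in\{3,5\}$: Theorem~\ref{thm:mot-+-prop}(1) only gives unramifiedness at primes \emph{not dividing} $qrb=15b$, and the whole point of \S\ref{sec:conductor} is that the conductor exponent at $\id{q}\mid 3$ or $\id{q}\mid(\sqrt{5})$ can be $2$ or $3$ even when $\ell\nmid b$ --- the motive has only \emph{potentially} good reduction there. Consequently $a_{\id{q}}(t_0)$ is not available by point-counting on the naive model modulo $\id{q}$, and the Frobenius trace that constrains the reducible residual representation lives over the extension $M/K_{\id{q}}$ where $A$ attains good reduction, which brings in the inertial degree $f\in\{1,2\}$ and the inertial exponent $c_J$. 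Your product $B(\id{q})$ over residues $t_0\bmod\id{q}$ does not account for this. Even for $\ell=2$ (where good reduction does hold since $2\nmid 15$ and $2\nmid b$) there is a subtlety you pass over: $a^5+c^3=-b^p$ with $b$ odd forces $2\mid ac$, so $t_0=-a^5/c^3$ reduces to $0$ or $\infty$ modulo $2$; the curve $\bfC^+_{5,3}(t_0)$ degenerates there and the Euler factor must come from the N\'eron model (the paper extracts $a_J\in\{-1,-8\}$ directly by \texttt{Magma}). The paper's actual mechanism is not Mazur-style elimination over residues of $t_0$ but the effective bound of \cite{MR3690598}: $p$ divides $\prod_{a_J}\Res\bigl(X^{c_J h_K}-1,\,x^2-a_Jx+\normid{\idK}^f\bigr)$, with $a_J$ constrained by the Hasse--Weil bound over $\F_\ell$ and $\F_{\ell^2}$ rather than enumerated by specializing $t_0$. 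Without passing to $M$ and working with this resultant and the Hasse--Weil inequality, your argument cannot produce the stated constants $C(3)=41363281$ and $C(5)=335809$.
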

\begin{proof}
  The proof follows the one given in Corollary~\ref{coro:large-image}.
  By Corollary \ref{coro:Curve-with-non-CM}, the Jacobian
  $J=\Jac(\bfC_{5,3}^+(t_0))$ does not have complex multiplication, so
  the first hypothesis of Corollary~\ref{coro:large-image} can be
  removed (see Remark
  \ref{rem:first-condition-irred}).
  Let $\idK$ be a prime in $K$ above $\ell$. Taking $\idK$ in
  Theorem~\ref{thm:irredasym} we need to verify that the variety $J$
  has potentially good reduction at $\idK$. Since $\ell\nmid b$,
  this follows from Theorem \ref{thm:mot-+-prop} (1) if $\ell\neq 3,5$
  and from Proposition \ref{prop:Igusa} if $\ell=3,5$.  The explicit
  value of $C(\ell)$ can be computed following the proof of Theorem 2
  in \cite{MR3690598}. Let $M$ denote a field where $J/K$ attains
  good reduction at $\idK$, and let $f$ denote the inertial degree
  of $M/K$. Then according to loc.\ cit.,
  \begin{equation}
    \label{eq:reducible-bound}
    p \mid \prod_{a_J} \text{Res}(X^{c_J \cdot h_K}-1,x^2-a_Jx+\norm(\idK)^f),    
  \end{equation}
  where Res denotes the resultant, $c_J$ is the inertial exponent of $J$, $h_K$
  denotes the class group of $K$ and the product runs over all the
  possible local factors of $J$ over $M$. 
  
  In our case, $h_K=1$. If $\ell=3,5$ then  $f=1,2$ and a priory $c_J=4, 12$ for $q=3$ and $c_J=4,20$ for
  $q=5$. Otherwise, $\ell\neq 3,5$ and so, since $J$ has good reduction over $K$, we have $f=c_J=1$.
  
  Suppose $\ell=2$. Then, an easy computation in \texttt{Magma} shows
  that $a_J\in\{-1,-8\}$. Then, the product in
  (\ref{eq:reducible-bound}) equals 6084 and so $C(2)=13$.
  
  Suppose then that $\ell=3,5$ and that the residual
  representation decomposes like
  \[
   \bar{\rho}_{\id{p}} \sim \psi_1\chi_p \oplus \psi_2,
 \]
 where $\chi_p$ denotes the cyclotomic character. Then $\psi_1,\psi_2$
 have conductor dividing $3\cdot(\sqrt{5})$. The ray class
 group 
 of $K$ of modulus $3(\sqrt{5})\infty_1\infty_2$, where $\infty_i$
 denote the two places at infinity, is isomorphic to
 $\Z/4\Z \times \Z/2\Z$. Hence $\psi_i$ has order at most $4$, for $i=1,2$. Then
 (following the proof of \cite[Theorem 2]{MR3690598}) we can take
 $c_J=4$ in (\ref{eq:reducible-bound}). To compute all possible values
 of $a_J$ in (\ref{eq:reducible-bound}), recall that
 $a_J \in \Z[\frac{1+\sqrt{5}}{2}]$. Write
 $a_J = \frac{\alpha + \beta\sqrt{5}}{2}$. Then the Hasse-Weil bound (our
 curve has genus $2$) over $\F_\ell$ and $\F_{\ell^2}$, where
 $\ell = \normid{\idK}$, gives the inequalities
 \[
   |\alpha| \le 4\sqrt{\ell}, \qquad \left|\frac{\alpha^2+5\beta^2}{2}-4\ell\right|\le 4\ell.
 \]
 This gives the bounds for $f=1$: $C(3)=8969$, $C(5)=809$ and for
 $f=2$: $C(3)=41363281$, $C(5)=335809$. The code to compute these
 values is included in \cite{github}.
\end{proof}

\subsection{Spaces of Hilbert modular forms}

Let $K=\Q(\sqrt{5})=\Q(\zeta_5)^+$. Let $(a,b,c)$ be a primitive solution to
\begin{equation}\tag{\ref{eq:5p3}}
	x^5 + y^p + z^3 =0.
\end{equation}
Keeping the notation of \S\ref{section:prel}, let
$\rho_{\id{p}} := \rho_{\id{p}}^{+} : \Gal_K \to \GL_2(K_{\id{p}})$
be the Galois representation in (\ref{eq:gal-rep}) attached to the
specialization of the motive $\HGM^{+}$ at the point
$t_0 = -\frac{a^{5}}{c^{3}}$.
 Define $a_0, c_0 \in \ZZ$ by
$a_0=a 5^{-v_5(a)}$ and $c_0=c 3^{-v_3(c)}$. Let $\id{p}$ be a prime
in $K$ above $p$.

%
%

\begin{thm}
  \label{thm:spaces}
  Suppose that $\bar{\rho}_{\id{p}}$ is irreducible.  Then, there
  exists a Hilbert newform
  $\newform \in S_2(\Gamma_0(3^{\varepsilon_3}\cdot
  (\sqrt{5})^{\varepsilon_5}))$ over $K$ such that
  \begin{align}\label{eq:iso-modform}
    \bar{\rho}_\id{p} \simeq\bar{\rho}_{\newform,\id{P}},
  \end{align}
  where $\id{P}$ is a prime ideal of the coefficient field $K_\newform$
  dividing $p$. In addition,
  \begin{enumerate}[(i)]
  \item The value $\varepsilon_3$ satisfies
    \[
      \varepsilon_3=\begin{cases}
			2 & \text{if } 3\mid ab,\\ 
			2 &  \text{if } 3\mid c  \text{ and } \pm2 c_0^3\equiv a^5\pmod 9,\\
			2 & \text{if } 3\nmid abc  \text{ and } 4c^3\equiv a^5\pmod 9,\\
			3 & \text{otherwise.}
                    \end{cases}
   \]
 \item The value $\varepsilon_5$ satisfies
   \[
     \varepsilon_5=\begin{cases}
			2  & \text{if } 5\mid bc,\\
			2 & \text{if } 5\mid a \text{ and } \frac{c^3}{a_0^5}\equiv 6, 12, 28 \pmod {25},\\
			2 & \text{if } 5\nmid  abc \text{ and } \frac{c^3}{a}\equiv 6,8, 17, 19 \pmod {25}, \\
			3 & \text{otherwise.}
                   \end{cases}	
  \]
\end{enumerate}
\end{thm}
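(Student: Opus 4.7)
The plan is to combine the modularity of $\mot^+$ with Ribet-style level lowering for Hilbert modular forms, and then to pin down the conductor exponents of $\bar{\rho}_{\id{p}}$ at the primes $(3)$ and $(\sqrt 5)$ via the tables of Section \ref{sec:conductor}.

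First, Theorem \ref{thm:modularity} applied with $q=5$, $r=3$ shows that $\mot^+=\HGM^+(t_0)$ is modular, so (after the Tate twist of Remark \ref{rem:Tate-twist}) it corresponds to a parallel weight two Hilbert newform $g$ over $K=\Q(\sqrt 5)$ whose Galois representations match those of $\mot^+$. In particular $\bar{\rho}_{\id{p}}\simeq \bar{\rho}_{g,\id{P}}$ for some prime $\id{P}$ of the coefficient field $K_g$ above $p$.

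Next I locate the conductor of $\bar{\rho}_{\id{p}}$. By Theorem \ref{thm:conductor-general}(i) the representation $\bar{\rho}_{\id{p}}$ is unramified outside primes of $K$ above $15p$ and is finite at primes above $p$. At any prime $\id{l}$ of $K$ with $\id{l}\mid b$ and $\id{l}\nmid 15$, Theorem \ref{thm:mot-+-prop} gives multiplicative reduction, and since $t_0=-a^5/c^3$ with $\gcd(a,c)=1$ the valuation of the Tate parameter at $\id{l}$ is divisible by $p$. The irreducibility hypothesis on $\bar{\rho}_{\id{p}}$ therefore allows one to invoke the level lowering theorems for Hilbert modular forms (Jarvis, Fujiwara, Rajaei), together with the finite-flat lifting theorem of Kisin at primes above $p$. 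This produces a Hilbert newform $\newform$ with $\bar{\rho}_{\newform,\id{P}}\simeq\bar{\rho}_{\id{p}}$ whose level divides $(3)^{\varepsilon_3}\cdot(\sqrt 5)^{\varepsilon_5}$, where
\[
\varepsilon_3 = v_{(3)}(\cond(\bar{\rho}_{\id{p}}^+)) \quad \text{and} \quad \varepsilon_5 = v_{(\sqrt 5)}(\cond(\bar{\rho}_{\id{p}}^+)).
\]

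It remains to identify, in each case, an upper bound for these two exponents. For $\varepsilon_3$, Corollary \ref{coro:conductor-at-3} and Table \ref{table:cond3} give the answer by a direct case analysis: the tame conductor is at most $2$ whenever $3\mid ab$; it equals $2$ when $3\mid c$ with $\pm 2c_0^3\equiv a^5\pmod 9$; it equals $2$ when $3\nmid abc$ with $4c^3\equiv a^5\pmod 9$; in every remaining residue class modulo $9$ the table forces $\varepsilon_3=3$. For $\varepsilon_5$, I apply Proposition \ref{prop:conductor-at-q} with $(q,r)=(5,3)$; the dichotomy $\varepsilon_5\in\{2,3\}$ is controlled by the reducibility over $\Q_5$ of the polynomial
\[
F_5(x) = x^5 - 5a^2x^3 + 5a^4x + 2a^5 + 4c^3,
\]
obtained from (\ref{eq:Def-F}) by exchanging $a\leftrightarrow c$ and $q\leftrightarrow r$. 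By the analogue of Lemma \ref{lemma:irreducibility} in the $q$-direction, this reducibility is detected by the $5$-adic valuation of the constant term of the shifted polynomial $F_5(x-2a^5-4c^3)$; a case analysis modulo $25$ identifies the classes of $c^3/a_0^5$ (when $5\mid a$) or of $c^3/a$ (when $5\nmid abc$) in which this valuation is at least $2$, yielding the stated residues and hence $\varepsilon_5=2$.

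The main obstacle is the mod-$25$ bookkeeping required for $\varepsilon_5$: one must expand $F_5(x-2a^5-4c^3)$, invoke Fermat's little theorem to simplify the high powers of $a$ and $c$, and enumerate the residue classes in which the $5$-adic valuation of the constant term is exactly $1$ (giving irreducibility, $\varepsilon_5=3$) versus at least $2$ (giving reducibility, $\varepsilon_5=2$). The mod-$9$ analysis for $\varepsilon_3$ is lighter and is already encoded in Table \ref{table:cond3}, so no new computation is required there.
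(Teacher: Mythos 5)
Your overall scaffold is the same as the paper's: modularity (Theorem \ref{thm:modularity}, \cite[Theorem 3.13]{BCDF1}), level lowering for Hilbert modular forms, then reading off the local conductor exponents at the prime ideals $(3)$ and $(\sqrt{5})$ from the tables of Section \ref{sec:conductor} and the polynomial reducibility criterion (Lemma \ref{lemma:irreducibility}). Your explicit form $F_5(x)=x^5-5a^2x^3+5a^4x+2a^5+4c^3$ is correct. However, there is a genuine gap in pinning down the exponents in the multiplicative cases. Corollary \ref{coro:conductor-at-3} and Table \ref{table:cond3} give $v_{\id{r}}(\cond(\mot^+))\in\{1,2\}$ when $3\mid a$ and $\in\{0,1,2\}$ when $3\mid b$; moreover, the level-lowering argument requires knowing the conductor of the \emph{residual} representation $\bar{\rho}_{\id{p}}$, which can only drop further. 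Your assertion that ``the tame conductor is at most $2$'' therefore does not establish $\varepsilon_3=2$, which is what the theorem claims (and what is needed in the proof of Theorem~A, where the form is located in a space with $i\in\{2,3\}$). The paper closes this gap by three additional inputs that your proposal never invokes: (a) Lemma \ref{lemma:congruences} shows the congruence of Proposition \ref{prop:congruences} holds over $K$ with \emph{no} quadratic twist, which is what makes Remark \ref{remark:non-twist} and Remark \ref{rem:twist-control} applicable; (b) Remark \ref{remark:non-twist} then forces residual conductor exactly $2$ in the special (potentially multiplicative) case when $3\mid b$; and (c) Proposition \ref{prop:Igusa} (with Remark \ref{rem:pot-good-red}) shows $\Jac(\bfC^+_{5,3}(t_0))$ has potentially good reduction at $3$ when $3\mid a$, forcing the conductor to be $2$ and not $1$.

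The analogous gap occurs at $(\sqrt{5})$. Your $F_5$-reducibility analysis treats the rows of Table \ref{table:Conductorq} where $5\nmid bc$; it says nothing about the case $5\mid bc$, where the table again only gives the range $\{0,1,2\}$. The paper handles $5\mid bc$ via Corollary \ref{coro:conductor-at-r} together with Remark \ref{rem:twist-control}, and it uses Proposition \ref{prop:Igusa} to supply the lower bound when $5\mid a$. In short: the wild/supercuspidal rows of the tables do determine $\varepsilon_3$ and $\varepsilon_5$ exactly as you say, but the multiplicative rows require the no-twist lemma, the non-twist remarks, and the Igusa-invariant reduction analysis, none of which appear in your proof.
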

\begin{proof} 
 By  Theorem~\ref{thm:modularity} and \cite[Theorem 3.13]{BCDF1}, 
 we have that  $\bar{\rho}_{\id{p}}$  is modular with trivial Nebentypus. Since $K$ is totally real, $\bar{\rho}_{\id{p}}$ is absolutely irreducible.  Hence, by level lowering for Hilbert modular forms \cite{MR3294620}, we have that (\ref{eq:iso-modform}) holds for some newform $\newform$  of parallel weight 2, trivial Nebentypus over $K$ and level equals the conductor of $\bar{\rho}_{\id{p}}$. By
 Theorem~\ref{thm:mot-+-prop} the latter is only supported at the prime ideals $(3)$ and $(\sqrt{5})$.
 
 To compute the value $\varepsilon_3$ we use Corollary
 \ref{coro:conductor-at-3} and Remark~\ref{remark:non-twist}
 (Lemma~\ref{lemma:congruences} says that there is no twist involved
 in the congruence). When $3 \mid a$, the curve $\bfC^+_{5,3}(t_0)$
 has potential good reduction (by Proposition~\ref{prop:Igusa} and
 Remark \ref{rem:pot-good-red}) so the conductor exponent must be
 $2$. The value of $\varepsilon_5$ when $5 \mid bc$ follows from
 Corollary \ref{coro:conductor-at-r} (and
 Remark~\ref{rem:twist-control}) and the result when $5 \mid a$
 follows from Proposition \ref{prop:Igusa}. To prove the last two
 cases we need to characterize when the polynomial $F(x)$ is
 irreducible over $\Q_5$. By Lemma \ref{lemma:irreducibility} there
 are finitely many cases to consider, so we run all cases with a 
 script available in \cite{github}.
\end{proof}
Table~\ref{table:dimensions} contains information on the spaces of
cuspidal Hilbert modular forms of parallel weight $2$ and level
$\Gamma_0(3^i\cdot(\sqrt{5})^j)$ for $i, j \in \{2,3\}$. We included
an extra column with the number of newforms whose coefficient field
equals $\Q(\sqrt{5})$. The script used to compute the entries of the
table is available at \cite{github}.

\begin{table}[H]
  \begin{tabular}{|l|c|c|c|}
    \hline
    Level &  Dimension & \# newforms & $K_\newform=K$ \\
    \hline
    \hline
    $3^2\cdot(\sqrt{5})^2$ & $45$ & $14$ & $4$\\
    \hline
    $3^3\cdot(\sqrt{5})^2$ & $405$ & $111$ & $11$\\
    \hline
    $3^2\cdot(\sqrt{5})^3$ & $225$ & $35$ & $21$\\
    \hline
    $3^3\cdot(\sqrt{5})^3$ & $2025$ & $112$ & $44$\\
    \hline
    \end{tabular}
    \caption{Dimensions of spaces of Hilbert modular forms.}
    \label{table:dimensions}
\end{table}

\subsection{Ghost solutions}\label{subsec:ghost-5p3}
Recall from Definition \ref{def:ghost} that ghost solutions are tuples of the form $(a,c,s,\ell,m,n,u,v)$, $ac\neq0$, satisfying 
\begin{equation}
  \label{eq:ghost}
  5^s3^\ell x^5 \pm 5^m 3^n + 5^u3^v z^3=0,
\end{equation}
that might not be solutions to (\ref{eq:5p3}). 
Following \S \ref{section:obstruction}, in this case we set 
$t_0:=-\frac{ a^5 5^{s-u}3^{\ell-v}}{c^3}$. Doing
a computer search we found the list of solutions given in
Table~\ref{table:ghost-sols} (which is probably complete). The table
includes the conductor of the $2$-dimensional representation obtained
from this specialization.
\begin{table}[H]
  \begin{tabular}{|r|c||r|c||r|c||r|c||r|c|}
    \hline
    $t_0$ & Conductor & $t_0$ & Conductor& $t_0$ & Conductor & $t_0$ & Conductor& $t_0$ & Conductor\\
    \hline
    \hline
    $-\frac{32}{343}$ & $3^4 \cdot(\sqrt{5})^2$ & $\frac{9}{8}$ & $3^3 \cdot(\sqrt{5})^3$& $-\frac{1}{8}$ & $3^3\cdot(\sqrt{5})^3$ & $\frac{25}{24}$ & $3^5\cdot(\sqrt{5})^7$ & $-\frac{1}{24}$ & $3^5\cdot(\sqrt{5})^3$ \\
    \hline
    $\frac{32}{5}$ & $3^2\cdot(\sqrt{5})^5$ & $\frac{3}{8}$ & $3^4\cdot(\sqrt{5})^5$ & $\frac{32}{27}$ & $3^3\cdot(\sqrt{5})^5$ & $\frac{5}{8}$ & $3^4\cdot(\sqrt{5})^7$ & $\frac{1024}{1029}$& $3^5\cdot(\sqrt{5})^5$\\
    \hline
    \end{tabular}
      \caption{Ghost solutions.}
      \label{table:ghost-sols}
\end{table}	
The values $t_0=-\frac{1}{8}$ and $t_0=\frac{9}{8}$ (coming from Catalan's solution) 
are the only ones that correspond to newforms in
$S_2(\Gamma_0(3^3 \cdot (\sqrt{5})^3))$. As we will prove, they
correspond to the two newforms in the space that cannot be discarded
using Mazur's method, which is the standard strategy to eliminate
possible newforms satisfying (\ref{eq:iso-modform}). Note that the
study of ghost solutions allows us to find an equation for the
hyperelliptic curve attached to these two newforms. This might be
useful while applying other elimination procedures, like the
symplectic argument.

\subsection{Discarding solutions: Mazur's method}
\label{section:Mazur}
To discard forms not related to solutions, we apply an idea due to
Mazur that we now recall. Let $\newform$ be one of the newforms in the
above described spaces. For $\idK$ a prime ideal of $K=\Q(\sqrt5)$,
denote by $a_{\idK}(\newform)$ the $\idK$-th Fourier coefficient
of $\newform$.

The underlying idea of the method is to understand solutions to (\ref{eq:5p3})
modulo $\ell$ (for $\ell$ a rational prime not dividing $15p$). They
will impose congruence conditions on the Fourier coefficient
$a_{\idK}(\newform)$ for prime ideals $\idK$ of $K$  dividing
$\ell$. More concretely, let
\[
  S_\ell = \{(\tilde{a},\tilde{b},\tilde{c}) \in
  \F_\ell^3\setminus\{(0,0,0)\} \; : \;
  \tilde{a}^5+\tilde{b}^p+\tilde{c}^3=0\}.
\]
Any primitive solution $(a,b,c)$ to (\ref{eq:5p3})
reduces to an element in $S_\ell$. Let $S_\ell^\times$ denote the subset of
$S_\ell$ made of elements where none of their entries are zero. Then a
solution $(a,b,c)$ belongs to one of the three cases:
\begin{enumerate}[\text{Case} (1).]
\item Its reduction $(\tilde{a},\tilde{b},\tilde{c}) \in S_\ell^\times$.
\item Either $\tilde{a} = 0$ or $\tilde{c}=0$, but $\tilde{b} \neq 0$.
\item  $\tilde{b}=0$.
\end{enumerate}
\subsubsection{Case $(1)$:} The first case corresponds to values
$t_0=-\frac{a^5}{c^3}\in\PP^1(\Q)\setminus \{0,1,\infty\}$ whose mod $\ell$ reduction $\tilde{t}_0$ belongs to $\F_\ell\setminus \{0,1\}$. We compute for each  $\tilde{t}_0 \in \F_\ell\setminus \{0,1\}$
 the value $a_{\idK}(\mot^+)$ corresponding to
the trace of $\Frob_{\idK}$ acting on $\mot^+$, using the code
written by Fernando Rodriguez Villegas  in \texttt{Pari/GP} (available at the \texttt{GitHub}
repository \url{https://github.com/frvillegas/frvmath}). If the newform $\newform$ is attached to a solution,
then
\begin{equation}
  \label{eq:Mazur}
a_{\idK}(\newform) \equiv a_{\idK}(\mot^+) \pmod{\id{p}},  
\end{equation}
where $\id{p}$ is a prime ideal dividing $p$ in the composition of $K_\newform$ and $K$. 

\begin{remark}
  There is a caveat here: Villegas' code computes the trace of a
  Frobenius element as an $\ell$-adic number. The number is in fact
  algebraic  (see \cite[Corollary 6.19]{GPV}). In practice, we
  compute the number to enough precision that allows us to recognize
  it as a root of a degree $2$ polynomial (without distinguishing
  between the value and its Galois conjugate).
 
\end{remark}

\begin{remark}
  For the particular signature $(5,p,3)$ we have a genus 2 curve
  giving rise to the hyperelliptic motive so we can compute its Euler
  factor using \texttt{Magma}. However, we deliberately choose to use
  the aforementioned code to compute traces of hypergeometric motives,
  because our implementation works for any signature, opening the door to
  future applications with signatures for which no Frey hyperelliptic
  curve is known.
\end{remark}
We say that a prime $\ell$ \textit{eliminates} Case (1) of a newform $\newform$
if there exists a prime $\idK\mid \ell$ of $K$ such that the
left-hand side of (\ref{eq:Mazur}) is different from the right-hand side
for all possible specializations $\tilde{t}_0 \in \F_\ell\setminus \{0,1\}$. In this case we get an
explicit bound for the exponents $p$ for which a solution might exist,
since $\id{p}$ must divide the (non-zero) value
$a_{\idK}(\newform) - a_{\idK}(\mot^+)$.

We say that a prime $\ell$ \textit{fails} Case (1) for $\newform$ if
for each prime ideal $\idK \mid \ell$ there is a value $\tilde{t}_0$
such that both sides coincide. Catalan's solutions described in \S
\ref{section:obstruction} correspond to two Hilbert newforms in
$S_2(\Gamma_0(3^3\cdot(\sqrt{5})^3))$ that give for each prime $\ell$
two values $\tilde{t}_0$ (the reduction of $-\frac{1}{8}$ or
$\frac{9}{8}$ modulo $\ell$) for which Case (1) fails.

\begin{remark}
  If the coefficient field $K_\newform$ is different from $K$, then
  there is a prime ideal $\idK$ that eliminates Case~(1) of
  $\newform$. This argument applies to all three cases, so while
  proving non-existence of solutions to (\ref{eq:5p3}) for $p$
  sufficiently large, it is enough to restrict to newforms $\newform$
  satisfying $K_\newform =K$. This justifies the last column of
  Table~\ref{table:dimensions}.
\end{remark}

\begin{exm}
  Take $\ell=11$, a split prime in $K$ and let
  $\idK=(\frac{-1+3\sqrt{5}}{2})$. We need to compute the minimal
  polynomial of $11 \cdot a_{\idK}(\mot^+)$ (the Tate twist
  of the motive, as explained in Remark~\ref{rem:Tate-twist}) for
  $\tilde{t}_0=2,\ldots,6$. Here is how we compute the value for
  $\tilde{t}_0=2$ with a $10$ digits precision. 
   {\small
\begin{verbatim}
? hgm(2,[1/5,-1/5],[1/3,-1/3],11,1,10)
% 1 = 9*11^-1 + 7 + 4*11^2 + 11^3 + 4*11^4 + 2*11^5 + 3*11^6 + 5*11^7 + 7*11^8
+ O(11^9)
\end{verbatim}
  } The input of the routine is as follows: the value $\tilde{t}_0$, the parameters, the
  prime $\ell$, the inertial degree of $\ell$ in $K$ (in this case
  $1$) and the precision.  We  can recognize this value as an algebraic integer as follows.
\begin{verbatim}
? algdep(hgm(2,[1/5,-1/5],[1/3,-1/3],11,1,10)*11,2)
%2 = x + 5
\end{verbatim}
  There is a caveat here: the script that we used does not depend on
  the prime ideal $\idK$, but of its norm! The underlying reason is
  that the finite hypergeometric sum is really a number in
  $K=\Q(\sqrt{5})$. If we want to describe it as an element in
  $\Q_{11}$ we need to choose a prime ideal of $K$ dividing
  $11$. Different choices give algebraic conjugate values, so while
  recognizing the $11$-adic number as an algebraic number, we do not
  distinguish between the values at both primes dividing $11$, but in practice
  this is still good enough for the elimination procedure.
  
  Applying the algorithm to the other values of $\tilde{t}_0$ we obtain the
  values in Table~\ref{table:aps}.
  \begin{table}[H]
    \begin{tabular}{|c|c||c|c||c|c|}
      \hline
      $t_0$ & $11a_{\idK}(\mot^+)$ & $t_0$ & $11a_{\idK}(\mot^+)$ & $t_0$ & $11a_{\idK}(\mot^+)$\\
      \hline
      \hline
      $2$ & $x+5$ & $5$ & $x-2$ &  $8$ & $x^2+2x-19$\\
      \hline
      $3$ & $x^2-5$ &  $6$ & $x^2+4x-1$  &$9$ & $x^2+3x+1$\\
      \hline
      $4$ & $x^2-x-1$ & $7$ & $x^2+5x-5$ & $10$ & $x^2+x-11$ \\
      \hline
    \end{tabular}
      \caption{Characteristic polynomial of $11a_{\idK}(\mot^+)$.}
      \label{table:aps}
\end{table}	
%
The space $S_2(\Gamma_0(3^2\cdot (\sqrt{5})^2))$ has $14$ (Galois orbits of) 
newforms whose $\idK$-th eigenvalue equals:
$0, -2, 0, 4, -2, 0,$ $\pm 3\sqrt{2}, \pm 2\sqrt{2}, \frac{\pm 9+\sqrt{5}}{2}, \frac{\pm 3+3\sqrt{5}}{2}$. These values are
different from the roots of the polynomials in Table~\ref{table:aps}, proving that the form
cannot correspond to a Case (1) solution if $p>281$. Using more values for the prime $\ell$ we get a better bound.
\end{exm}

\subsubsection{Case $(2)$:} The second case corresponds to a prime
$\ell$ dividing $ac$ (so $t_0$ reduces to $0$ or $\infty$ modulo
$\ell$). Since $\ell\nmid b$, by Theorem \ref{thm:mot-+-prop}, the
motive $\mot^+$ has good reduction at any prime ideal of $K$ dividing
$\ell$, but Euler's curve has bad reduction at it. Using the formula
given in \cite[Appendix A ]{GPV} we can compute the trace of a
Frobenius element for a prime ideal $\idF$ of $F=\Q(\zeta_{15})$
dividing $\ell$.  Write $t_0= \ell^{v_\ell(t_0)}
\widetilde{t_0}$. Then if $t_0$ reduces to $0$ modulo $\ell$, the
trace of $\Frob_{\idF}$ is given by the formula
\begin{equation}
  \label{eq:trace-0}
    - \norm(\idF)^{-1}\cdot 
    \left(\gent(\widetilde{t_0})^{-3}J(\gent^{2},\gent^{-8})+\gent(\widetilde{t_0})^{3}J(\gent^{-6},\gent^{8})\right),    
\end{equation}
where $\chi_\idF$ is as in (\ref{eq:char-def}) and $J(\cdot,\cdot)$ denotes the usual Jacobi sum. If $t_0$
reduces to $\infty$, then the trace of $\Frob_{\idF}$ equals
\begin{equation}
  \label{eq:trace-oo}
  -\norm(\idF)^{-1}\cdot 
  \left(\gent(\widetilde{t_0})^{-5}J(\gent^{2},\gent^{8})+\gent(\widetilde{t_0})^{5}J(\gent^{10},\gent^{-8})\right).
\end{equation}
As in the previous case, the value of  $\widetilde{t_0}$ mod $\ell$ is a priori any
element of $\F_\ell^\times$.
\begin{remark}
  The character $\gent^3$ has order $5$ so (\ref{eq:trace-0}) takes at
  most five different values. Similarly, the character $\gent^5$ has
  order $3$, so (\ref{eq:trace-oo}) takes at most $3$ different
  values.
\end{remark}

\begin{exm}
  \cite{github} contains a
  \texttt{Pari/GP} file with different scripts. One of them allows to compute
  the set of possible values for (\ref{eq:trace-0}) and (\ref{eq:trace-oo}).   Take as in the
  previous example $\ell=11$ and let us show how to compute (\ref{eq:trace-0}). The five different values are computed by
\verbatimfont{\footnotesize}
\begin{verbatim}
? Degenerate0(11)
%1 = [x^2 - 4*x - 316, x^2 + x - 101, x^2 - 19*x - 61, x^2 - 19*x + 59, x^2 + 41*x + 419]
\end{verbatim}
\verbatimfont{\small} Note that this gives the characteristic
polynomial of the possible values of the Frobenius coefficient over
the field $F$. We can compute the possible values over $K$ using
\texttt{Magma} scripts for the hyperelliptic curve
$\bfC_{5,3}^+$. Concretely, let $t=11^5$ and $\idK=(
4+\sqrt{5})$. Then we get {\small
\begin{verbatim}
_<x> := PolynomialRing(Rationals());
K := NumberField(x^2-5);
O := Integers(K);
Id := Factorisation(11*O)[1][1];
EulerFactor(BaseExtend(HyperellipticCurve(5*x^6-12*x^5+10*11^5*x^3+11^10),K),Id);
121*x^4 - 44*x^3 + 6*x^2 - 4*x + 1
\end{verbatim}
} Over the field $K$ the degree $4$ polynomial factors in the form
  \[(11x^2 -(2+2\sqrt{5})x + 1)(11x^2 - (2-2\sqrt{5})x + 1),\] so the
  value $a_{\idK}(\mot)$ 
  corresponds (after normalizing by multiplication by $\norm(\idK)=11$)
  to one of the values $2 \pm 2\sqrt{5}$. The prime $\idK$ has
  inertial degree $2$ over $F$, so the trace of Frobenius at a prime
  dividing $\idF$ equals $a_{\idF}=a_{\idK}^2-22$, which is a root of
  $x^2 - 4x - 316$ (the first polynomial).
\end{exm}
To discard Case (2) solutions, we compute all possible values
(\ref{eq:trace-0}) and (\ref{eq:trace-oo}) using our script and
compare them to the values $a_{\idK}(\newform)$ for each newform
$\newform$. If they are different, then the ideal $\id{p}$ must divide
their difference.

\subsubsection{Case $(3)$:} This case corresponds to the usual
\emph{level lowering} situation, namely the representation
attached to the motive $\mot^+$ has multiplicative reduction at the
prime ideal $\idK$ but the residual $\id{p}$-th adic one does
not. If $\newform$ is an eigenform in this situation, the following
congruence must hold
\begin{equation}
  \label{eq:lower-level}
  a_{\idK}(\newform) \equiv \pm (\norm(\idK) + 1) \pmod{\id{p}}.
\end{equation}

\vspace{5pt}

The described procedure allows us to prove the following result.

\begin{thmA}\label{thmA}
 Set
  $\bfP:=\{2,3,5,7,11,13,19,29,31,41,61,71,79,89,101,109\}$. Let $p$ be a prime
  number not in $\bfP$ and let $(a,b,c)$ be a primitive solution to
  \begin{equation}\tag{\ref{eq:5p3}}
    x^5 + y^p + z^3 =0.
  \end{equation}
  Set $t_0:=-a^5/c^3$. Then one of the following holds:
  \begin{enumerate}
  \item The residual Galois representation
    $\overline{\rho}_{\id{p}}^+$ is reducible.
  \item The Galois representation $\rho_{\id{p}}^+$ is congruent
    modulo $p$ to the Galois representation associated to a Hilbert modular form of $\Q(\sqrt{5})$ with
    complex multiplication by $\Q(\sqrt{-3})$ or by
    $\Q(\zeta_{15})/\Q(\sqrt{5})$.
  \item The Galois representation $\rho_{\id{p}}^+$ is congruent
    modulo $p$ to ${\rho}_{\bfC_{5,3}^+(t_0),\id{p}}$, where $t_0\in\{-\frac{1}{8}, \frac{9}{8}\}$ (these two values correspond to ghost solutions).
	\end{enumerate}
\end{thmA}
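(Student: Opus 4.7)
The strategy is to show that, barring the three stated exceptions, Mazur's method as set up in Section \ref{section:Mazur} eliminates every Hilbert newform possibly attached to a hypothetical solution $(a,b,c)$. Assuming (1) fails, Theorem \ref{thm:spaces} produces a parallel weight $2$, trivial Nebentypus Hilbert newform $\newform$ over $K=\Q(\sqrt{5})$ of level $3^{\varepsilon_3}(\sqrt{5})^{\varepsilon_5}$ with $\varepsilon_3,\varepsilon_5\in\{2,3\}$, such that $\bar{\rho}^+_{\id{p}}\simeq\bar{\rho}_{\newform,\id{P}}$ for some prime $\id{P}$ of $K_{\newform}$ above $p$. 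By the remark following Case~(1) of Section \ref{section:Mazur}, if $K_\newform$ strictly contains $K$ the form is eliminated for all $p$ outside a small computable finite set, so we may restrict to the $4+11+21+44=80$ Galois-orbit representatives of newforms with $K_\newform=K$ recorded in Table \ref{table:dimensions}.

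The plan is then to execute the Mazur sieve at every prime ideal $\idK$ of $K$ of norm up to $400$ with residue characteristic different from $3$ and $5$. Three kinds of admissible Frobenius traces $\tau$ must be listed at each such $\idK$: in Case~(1), the values $a_{\idK}(\mot^+_{\tilde t_0})$ as $\tilde t_0$ ranges over $\F_\ell\setminus\{0,1\}$, computed via the finite hypergeometric trace formula of Theorem \ref{thm:trace-frob}; in Case~(2), the Jacobi-sum values provided by \eqref{eq:trace-0} and \eqref{eq:trace-oo}; in Case~(3), the level-lowering contributions $\pm(\norm(\idK)+1)$ from \eqref{eq:lower-level}. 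The congruence $\bar{\rho}^+_{\id{p}}\simeq\bar{\rho}_{\newform,\id{P}}$ then forces $\id{P}$ to divide $a_{\idK}(\newform)-\tau$ for some admissible $\tau$, and passing to norms to $\Q$ and intersecting over several $\idK$ yields an explicit finite set of candidate rational primes $p$ for each $\newform$.

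Three families of newforms are expected to survive. Those with CM by $\Q(\sqrt{-3})$ or by $\Q(\zeta_{15})/\Q(\sqrt{5})$ fall in alternative (2); their Frobenius eigenvalues come from Hecke characters and are consistent with many admissible $\tau$, so the sieve cannot discard them by elementary means. The two ghost-solution newforms attached to $t_0\in\{-1/8,\,9/8\}$ lie in $S_2(\Gamma_0(3^3(\sqrt{5})^3))$ by Section \ref{subsec:ghost-5p3} and Table \ref{table:ghost-sols}; by the very definition of a ghost solution, their $\idK$-th eigenvalue agrees with $a_{\idK}(\mot^+_{\tilde t_0})$ at the reduction of $t_0$, placing them in alternative (3). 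The main obstacle is thus the computer verification for the remaining newforms: especially for the largest space $S_2(\Gamma_0(3^3(\sqrt{5})^3))$, one has to check for all $44$ non-exceptional newforms with $K_\newform=K$ and roughly one hundred prime ideals of norm up to $400$ that the union of the resulting candidate sets of $p$ is exactly $\bfP=\{2,3,5,7,11,13,19,29,31,41,61,71,79,89,101,109\}$. This is accomplished using the \texttt{Magma} and \texttt{Pari/GP} scripts referenced in \cite{github}.
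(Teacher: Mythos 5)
Your proposal matches the paper's own proof in structure and substance: it invokes Theorem~\ref{thm:spaces} for the level-lowered newform, restricts to newforms with $K_\newform = K$ via the remark in \S\ref{section:Mazur}, and runs Mazur's three-case sieve over prime ideals of small norm, deferring to the \texttt{Magma}/\texttt{Pari\slash GP} scripts in \cite{github} for the actual elimination and for identifying the survivors as CM forms (case~2) or the two ghost-solution forms (case~3). The only small slips are writing ``strictly contains $K$'' where the paper's remark justifies discarding all forms with $K_\newform \neq K$ (including $K_\newform = \Q$), and describing the sieve over prime ideals of residue characteristic prime to $15$ where the paper excludes all $\ell \mid 30$ (so $\ell = 2$ is also left out); neither affects the argument.
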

\begin{proof}
  Let $\rho^+_{\id{p}}$ be the representation of $\HGM^+$
  specialized at $t_0$, as in (\ref{eq:gal-rep}). Suppose that
  $\bar{\rho}^+_{\id{p}}$ is irreducible. Then by Theorem
  \ref{thm:spaces}, there exists a Hilbert newform
  $\newform \in S_2(\Gamma_0(3^i\cdot (\sqrt{5})^j))$, with
  $i,j \in \{2,3\}$, such that
  $\bar{\rho}_{\id{p}}\simeq \bar{\rho}_{\newform,\id{P}}$, where
  $\id{P}$ is a prime in $K_\newform$ dividing $p$.
  
  We apply an implementation in
  \texttt{Magma} of the method previously described as follows: first
  we chose a set of primes to apply the elimination procedure. The
  stated result corresponds to the choice all rational primes $\ell$
  not dividing $30$ having a prime ideal $\idK$ in $K$ dividing it
  with $\norm(\idK) \le 400$. 
  For each prime
  $\idK$ we compute the possible value of $a_{\idK}(\HGM^+(t))$ for all
  values of $t$ (including $t=0, \infty$) with the \texttt{Pari/GP}
  command
\begin{verbatim}
? MagmaInput([7, 11, 13, 17, 19, 29, 31, 41, 59, 61, 71, 79, 89, 101, 109, 131,
139, 149, 151, 179, 181, 191, 199, 211, 229, 239, 241, 251, 269, 271, 281, 311,
331, 349, 359, 379, 389])
\end{verbatim}
  This creates a file called ``\texttt{Data.txt}'' (available in \cite{github}) needed to run the
  elimination script in \texttt{Magma}. In \texttt{Magma}, we load the file and run the
  following command.
  {\fontsize{11}{13}\selectfont
  	
\begin{verbatim}
> load "Data.txt";
> time TheoremA(2,2,Data);
The newforms [ 3, 9, 12 ] cannot be discarded using primes in [ 7, 11, 13, 17, 
19, 29, 31, 41, 59, 61, 71, 79, 89, 101, 109, 131, 139, 149, 151, 179, 181, 191,
199, 211, 229, 239, 241, 251, 269, 271, 281, 311, 331, 349, 359, 379, 389 ].
The rest of the newforms can be discarded for p outside  [ 2, 3, 5, 11, 19, 29 ]
Time: 57.710
\end{verbatim}
  } This proves that except for the forms indexed by $3, 9, 12$, all
  other newforms in the space $S_2(\Gamma_0(3^2\cdot (\sqrt{5})^2))$
  can be discarded if $p \not \in \{2, 3, 5, 11, 19, 29\}$. We are led
  to prove that the three forms that cannot be eliminated in fact have
  complex multiplication. It is not hard to verify that the first form
  has complex multiplication by $\Q(\sqrt{-3})$ and the other two ones
  by $\Q(\zeta_{15})/K$. The way to verify this is simple: take the
  first form $\newform$, and compute its quadratic twist by
  $\sqrt{-3}$. If it equals $\newform$ then it has complex
  multiplication by $\sqrt{-3}$. Otherwise, it should correspond to
  either another newform in the same space, or a newform in a smaller
  space. The space of Hilbert modular forms of level $(\sqrt{5})^2$ is
  the zero space, while the space of level $3 \cdot (\sqrt{5})^2$ has
  three newforms. By looking at the Fourier coefficient at a prime
  ideal $\idK$ dividing $11$ it is easy to verify that none of them
  corresponds to a twist of $\newform$ (because
  $a_{\idK}(\newform)=0$, while the others are nonzero). Similarly,
  looking at the Fourier coefficients of newforms in
  $S_2(\Gamma_0(3^2\cdot(\sqrt{5})^2))$, it is easy to verify that if
  $\idK$ is a prime ideal dividing $79$ then $a_{\idK}(\newform)=16$
  while no other form has eigenvalue $\pm 16$ at $\idK$. A similar
  computation proves the result for the other two forms.

  The output for the other three spaces (including the running time)
  is available at \cite{github}. We summarize the needed information:
  \begin{enumerate}
  \item All newforms in $S_2(\Gamma_0(3^2\cdot (\sqrt{5})^3))$ except the ones labeled $\{ 1, 7, 11, 12, 13, 16, 21\}$ can be discarded when $p \not \in \{2, 3, 5, 7, 11, 13, 19, 29, 31\}$.
    
  \item All newforms in $S_2(\Gamma_0(3^3\cdot (\sqrt{5})^2))$ except the ones labeled $\{  64, 65, 69, 73, 77, 78, 79\}$ can be discarded when $p \not \in \{2,3,5,7,11,19,29,41,61 \}$.
    
  \item All newforms in $S_2(\Gamma_0(3^3\cdot (\sqrt{5})^3))$ except the ones labeled $\{  22, 39\}$ can be discarded when $p \not \in \{2, 3, 5, 7, 11, 13, 19, 29, 31, 41, 61, 71, 79, 89, 101, 109\}$.
  \end{enumerate}
%
%
%
%
%
%
  The seven newforms that cannot be eliminated in (1) have complex
  multiplication by $\Q(\sqrt{-3})$. One of them corresponds to the
  trivial solution $(\pm 1,\mp 1,0)$, as proved in
  Lemma~\ref{lemma:cond-at-t=inf} (see also Remark \ref{rem:cond-trivial}), and another is its quadratic twist by
  $\Q(\zeta_{15})$. 
  The seven forms in (2) have complex multiplication
  by $\Q(\zeta_{15})$. One of them arises from the trivial solution
  $(0, \pm 1, \mp 1)$, by Lemma~\ref{lemma:cond-at-t=0} and Remark \ref{rem:cond-trivial},  and another is its quadratic twist by $\sqrt{-3}$. 
  Finally,  the two newforms in (3) that cannot be discarded correspond to the
  stated ghost solutions.
\end{proof}

\begin{remark}\label{remar:Kraus}
  While studying other families of exponents (like the family
  $(p,p,r)$) once a finite set of primes $\bfP$ is obtained, one might
  try to use an idea due to Kraus (as explained in \cite{Kraus33p}) to
  prove non-existence of solutions for primes in $\bfP$. Kraus' idea
  consists on given a particular value of $p$, chose a prime $\ell$
  satisfying that $p \mid \ell-1$. Then one expects that not all
  values $\tilde{t}_0 \in \F_\ell \setminus\{0,1\}$ in Case (1) come
  from elements of $S_\ell$, so computing the particular set $S_\ell$
  gives an improvement on the elimination procedure. Take for example
  the generalized Fermat equation with signature $(7,7,5)$, and take $\ell=71$. Then the
  only values for $t_0$ in Case (1) are
  $\{4, 12, 19, 31, 36, 41, 53, 60, 68\}$ (there are only 9 possible
  values instead of 69).

  It looks like Kraus' idea does not give any improvement while
  working with three different prime exponents. We computed for many pairs
  $(p,\ell)$, with $15p \mid \ell-1$, the set $S_\ell$ and the
  corresponding value $\tilde{t}_0$ for the exponents $(5,p,3)$. In
  all cases the resulting set was the whole
  $\F_{\ell}\setminus\{0,1\}$. 
\end{remark}

The second stated result follows readily from Theorem \ref{thmA}.

\begin{thmB}
Let $\bfP=\{2, 7, 11, 13, 19, 29, 31, 41, 61\}$. Then if $p$ is a prime
  number not in $\bfP$, there are no primitive solutions $(a,b,c)$ to
  \[
 x^5 + y^p + z^3 =0
 \]
 with $b$ odd and $3\mid b$ or $5 \mid b$.
\end{thmB}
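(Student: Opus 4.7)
The plan is to derive Theorem B from Theorem A by excluding each of its three alternatives under the hypotheses that $b$ is odd and $3 \mid b$ or $5 \mid b$. These hypotheses force the Hilbert newform $\newform$ associated to a putative solution to lie in one of the three spaces $S_2(\Gamma_0(3^i\cdot(\sqrt{5})^j))$ with $(i,j)\in\{(2,2),(2,3),(3,2)\}$, thereby bypassing $S_2(\Gamma_0(3^3\cdot(\sqrt{5})^3))$, which is responsible for the primes $\{71,79,89,101,109\}$ that appear in the bad set $\bfP$ of Theorem A but not in the smaller set $\bfP$ of Theorem B.

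First I would handle the small exponents $p\in\{3,5\}$ (which lie outside $\bfP$ of Theorem B) by appeal to the complete resolutions of the signatures $(5,3,3)$ and $(5,5,3)$ in \cite{Bruin} and \cite{Poonen}. For $p\ge 7$, the assumption $p\notin\bfP$ forces $p\ge 17$, since $7,11,13\in\bfP$. In particular $p>C(2)=13$, so Corollary \ref{coro:large-image-5p3} applied with $\ell=2$ excludes Case $(1)$ of Theorem A using that $b$ is odd.

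Next I would exclude Case $(3)$ by invoking Theorem \ref{thm:spaces}: the hypothesis $3\mid b$ forces $\varepsilon_3=2$, and $5\mid b$ forces $\varepsilon_5=2$, so the level of $\newform$ has at least one of its two exponents equal to $2$; in particular $\newform\notin S_2(\Gamma_0(3^3\cdot(\sqrt{5})^3))$ and therefore cannot coincide with either of the two ghost newforms. For Case $(2)$, Theorem \ref{thm:mot-+-prop} shows that $3\mid b$ (resp.\ $5\mid b$) forces $\mot^+$ to have multiplicative reduction at the prime $(3)$ (resp.\ $(\sqrt{5})$) of $K$; combined with $\varepsilon_3=2$ (resp.\ $\varepsilon_5=2$), the local Weil--Deligne representation of $\mot^+$ at this place is a ramified Steinberg with non-trivial monodromy. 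Any CM Hilbert newform has induced Galois representation and thus trivial monodromy at every finite place, so $\newform$ cannot be CM, provided the special local type descends to the residual representation modulo $\id{p}$; this is automatic for $p\ge 17$ by a standard level-lowering analysis, since the relevant Ribet-type obstructions at these primes divide $9^2-1=80$ and $5^2-1=24$ respectively.

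After eliminating Cases $(1), (2), (3)$, the newform $\newform$ would have to be non-CM and non-ghost in one of the three spaces with $(\varepsilon_3,\varepsilon_5)\ne(3,3)$. The proof of Theorem A establishes that Mazur's method discards all such forms for $p$ outside the respective bad sets $\{2,3,5,11,19,29\}$, $\{2,3,5,7,11,13,19,29,31\}$, and $\{2,3,5,7,11,19,29,41,61\}$, whose union equals exactly $\bfP\cup\{3,5\}$. Since $p\ge 7$ and $p\notin\bfP$, $p$ lies outside this union, yielding the desired contradiction. The main obstacle in this plan is the Case $(2)$ descent of the ramified Steinberg local type from the motive to the residual representation, which I would expect to be the most delicate step but routine once the level-lowering condition at $(3)$ and $(\sqrt{5})$ is verified for $p\ge 17$.
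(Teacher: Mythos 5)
Your proposal follows essentially the same route as the paper: exclude Case~(1) of Theorem~A via Corollary~\ref{coro:large-image-5p3} with $\ell=2$ (using $b$ odd and $p>C(2)=13$); show that $3\mid b$ or $5\mid b$ forces a special (ramified Steinberg) local type at $(3)$ or $(\sqrt5)$, hence $\varepsilon_3=2$ or $\varepsilon_5=2$, which rules out Case~(3) (the two ghost newforms live only in $S_2(\Gamma_0(3^3(\sqrt5)^3))$) and, since CM forms have no special local type, also rules out Case~(2); then read off $\bfP$ from the union of the three relevant bad sets, removing $3$ and $5$ via \cite{Bruin,Poonen}.

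One citation needs correcting: you invoke Theorem~\ref{thm:mot-+-prop} to conclude that $3\mid b$ (resp.\ $5\mid b$) gives multiplicative reduction at $(3)$ (resp.\ $(\sqrt5)$). But part~(2) of that theorem explicitly excludes primes dividing $qr=15$; for the primes $3$ and $\sqrt5$ the correct reference is Proposition~\ref{prop:Igusa} together with Remark~\ref{rem:pot-good-red}, which is what the paper uses: $v_\ell(t_0-1)=p\,v_\ell(b)>3$ gives potential multiplicative reduction of $\Jac(\bfC^+_{5,3}(t_0))$ at $\ell\in\{3,5\}$. Also, your ``Ribet-type obstruction'' justification for the residual local type is slightly misplaced: because the special representation at these primes is a twist by a \emph{ramified quadratic} character (conductor exponent $2$, per Remark~\ref{remark:non-twist}), the residual local data is ramified for any $p>2$; the $p\mid\norm(\id q)^2-1$ obstruction is relevant to the unramified Steinberg case (conductor exponent $1$), not this one. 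The paper handles the persistence of the local type through level lowering by appealing to results such as \cite[Proposition~1.1]{MR4583916}. None of this changes the outcome; with the corrected citation your argument coincides with the paper's.
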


\begin{proof}
  Let $(a,b,c)$ be a solution to the equation satisfying that
  $3 \mid b$ or $5 \mid b$ and $p>13$. Since $b$ is odd, the
  representation $\bar{\rho}_{\id{p}}^+$ is absolutely irreducible (by
  Corollary~\ref{coro:large-image-5p3} since $C(2)=13$). By
  Proposition~\ref{prop:Igusa} (and Remark~\ref{rem:pot-good-red}) the
  local type at $3$ or at $(\sqrt{5})$ (depending on whether
  $3 \mid b$ or $5 \mid b$) is special. Then we can lower the level at
  all primes of $K$ dividing $abc$, keeping the same local type as
  $\rho_{\id{p}}^+$ at primes dividing $3\cdot(\sqrt{5})$. The local
  type being special implies that there exists a Hilbert newform
  $\newform$ without complex multiplication in
  $S_2(\Gamma_0(3^{\varepsilon_3}\cdot(\sqrt{5})^{\varepsilon_5}))$
  congruent to $\rho_{\id{p}}^+$, with $\varepsilon_3=2$ or
  $\varepsilon_5=2$. This contradicts Theorem \ref{thmA}, since the fact that
  $\newform$ does not have complex multiplication rules out the second
  case, and the fact that one exponent is $2$ rules out the third
  one. The set $\bfP$ is the one obtained while discarding forms in
  the spaces with $\varepsilon_3=2$ or $\varepsilon_5=2$. Note that we can also remove primes $3$ and $5$ from $\bfP$, since those cases were solved in \cite{Bruin,Poonen}.
\end{proof}

\subsection{On the Local type at $3$}
Let $p>3$ be a prime number and let $\id{p}$ be a prime ideal of $K$
dividing $p$.  The goal of this section is to compute the local type
(see Definition \ref{def:local-type}) at $3$ of the Galois
representation $\rho_{\id{p}}:=\rho_{\id{p}}^+$ (for different specializations of the
parameter) in order to rule out the third case in Theorem \ref{thmA} by
imposing a local condition on the solution. Let $\Q_9$ denote the
quadratic unramified extension of $\Q_3$.

\vspace{3pt}

Let $\rho_i : \Gal_K \to \GL_2(K_{\id{p}})$, $i=1,2$, denote the
$\id{p}$-adic Galois representations attached to
$\mot_1:=\HGM^+(-1/8)$ and $\mot_2:=\HGM^+(9/8)$ respectively.

\begin{lemma}
  \label{lemma:type-ghotsol}
  The local type of $\rho_i$ (for $i=1,2$) at the prime ideal $3$ is
  supercuspidal, given by the induction of a character of conductor 3
  from a quadratic ramified extension $L$ of $\Q_9$.
\end{lemma}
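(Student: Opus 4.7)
The plan is to leverage the congruences established in Lemma~\ref{lemma:Catalan-3} and Lemma~\ref{lemma:Catalan-3-9/8}. First, since $5$ is not a quadratic residue modulo $3$, the prime $3$ remains inert in $K=\Q(\sqrt{5})$, so the completion of $K$ at the prime above $3$ is $\Q_9$. The proofs of those lemmas identify the local type of $\rho_i$ at this prime with the restriction to $\Gal_{\Q_9}$ of the local type at $\Q_3$ of the elliptic curve $E_3^+(t_0)$, for $t_0\in\{-1/8,\,9/8\}$; moreover, they identify the latter as supercuspidal of the form $\Ind_{M_i}^{\Q_3}(\chi_i)$, with $M_i/\Q_3$ a ramified quadratic extension and $\chi_i$ a character of order $6$.

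Next I would apply Mackey's formula. Since $M_i/\Q_3$ is ramified and $\Q_9/\Q_3$ is unramified, these extensions are linearly disjoint over $\Q_3$, and $L_i:=M_i\cdot\Q_9$ is a ramified quadratic extension of $\Q_9$. Mackey then yields
\[
  \Ind_{M_i}^{\Q_3}(\chi_i)\big|_{\Gal_{\Q_9}}\simeq\Ind_{L_i}^{\Q_9}\bigl(\chi_i|_{\Gal_{L_i}}\bigr).
\]
I would then verify that this restriction is irreducible, hence supercuspidal. The non-trivial element of $\Gal(L_i/\Q_9)$ extends the non-trivial element $\sigma$ of $\Gal(M_i/\Q_3)$, so invariance of $\chi_i|_{\Gal_{L_i}}$ under $\Gal(L_i/\Q_9)$ would force $\chi_i^\sigma/\chi_i$ to coincide on $\Gal_{M_i}$ with the unramified quadratic character cutting out $L_i/M_i$. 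Since $\chi_i$ has $M_i$-conductor exponent $2$ (by the conductor-discriminant formula over $\Q_3$, using that the induced conductor equals $3$ and the discriminant of $M_i/\Q_3$ has exponent $1$), this coincidence can be excluded by a short explicit calculation with the local model of $E_3^+(t_0)$.

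Finally, the conductor of the induced representation, and hence that of $\chi_i|_{\Gal_{L_i}}$, is determined by unramified base change. Since $\Q_9/\Q_3$ is unramified, both the tame part and the Swan part of the local conductor are preserved, so the conductor exponent of $\rho_i$ at the prime above $3$ equals the conductor exponent of $E_3^+(t_0)$ at $\Q_3$, which is $3$. Applying the conductor-discriminant formula over $\Q_9$, with $L_i/\Q_9$ tame ramified quadratic (discriminant exponent $1$), the character $\chi_i|_{\Gal_{L_i}}$ has conductor ideal $(3)$ in $\cO_{L_i}$. The main obstacle is the verification that supercuspidality is preserved upon restriction to $\Gal_{\Q_9}$; once this arithmetic coincidence is excluded, the remaining steps are routine applications of Mackey and conductor-discriminant.
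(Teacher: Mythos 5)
Your route is noticeably more circuitous than the paper's. The paper proves this lemma in one line: the conductor exponent at $3$ of $\mot_i$ is $3$ (Lemmas~\ref{lemma:Catalan-3} and~\ref{lemma:Catalan-3-9/8}), the Nebentypus is trivial, and a standard classification of two-dimensional Weil--Deligne representations (they cite \cite[Corollary 3.1]{MR3056552}) says that over $\Q_9$ an odd conductor exponent with trivial Nebentypus forces the local type to be supercuspidal, induced from a ramified quadratic extension by a character of the indicated conductor. There is no need to pass through the elliptic curve $E_3^+(t_0)$ over $\Q_3$, restrict by Mackey, and then verify irreducibility of the restriction. Your approach requires an extra base change and the accompanying irreducibility check; the paper's approach works intrinsically over $\Q_9$ where the conductor has already been computed.

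The one genuine soft spot in your plan is exactly the step you flag as ``the main obstacle'': excluding the possibility that $\chi_i^\sigma/\chi_i$ is the unramified quadratic character, which you propose to settle by ``a short explicit calculation with the local model of $E_3^+(t_0)$.'' This is where you should notice that no calculation is needed: if $\chi_i|_{\Gal_{L_i}}$ were fixed by $\Gal(L_i/\Q_9)$, the restriction of $\rho_i$ to $\Gal_{\Q_9}$ would split as $\psi\oplus\psi\eta$ with $\eta$ the tame ramified quadratic character of $L_i/\Q_9$; running the conductor--discriminant formula for this sum (with trivial Nebentypus on inertia) yields only even conductor exponents (or conductor $\le 2$), never $3$. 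Since the conductor exponent is preserved under unramified base change and equals $3$, this reducible scenario is impossible. This parity argument is precisely the content of the classification the paper invokes, and it would let you close your Mackey argument cleanly rather than leaving it to an unspecified explicit computation. In short, your plan is correct in outline and would go through, but the irreducibility step needs the conductor-parity observation rather than a case-by-case check, and once you have that observation you discover you never needed to leave $\Q_9$ in the first place.
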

\begin{proof}
  Since the conductor exponent at $3$ of $\mot_i$ (for $i=1,2$) is $3$
  and its Nebentypus is trivial, its local type is supercuspidal,
  obtained as the induction of a character of conductor $3$ from a
  quadratic ramified extension $L$ of $\Q_9$ (see for example
  \cite[Corollary 3.1]{MR3056552}).
\end{proof}

\begin{lemma}
  \label{lemma:same-type-3}
  If $\bar{\rho}_\id{p} \simeq \bar{\rho}_i$, for $i=1,2$, then the
  local type of ${\rho}_\id{p}$ at $3$ is also supercuspidal, induced
  from the same quadratic extension $L$.
\end{lemma}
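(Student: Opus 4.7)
The plan is to proceed in two steps: first, to establish that $\rho_\id{p}|_{G_{\Q_9}}$ is supercuspidal of conductor exponent $3$; second, to identify the inducing quadratic extension as $L$ via the preservation of the local type under congruence. Since $3$ is inert in $K/\Q$, the completion of $K$ at the prime $\id{r}$ above $3$ equals $\Q_9$.

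For the first step, by Theorem \ref{thm:spaces} the conductor exponent $\varepsilon_3(\rho_\id{p})$ lies in $\{2,3\}$. The isomorphism $\bar{\rho}_\id{p}\simeq\bar{\rho}_i$ together with Lemma \ref{lemma:type-ghotsol} shows that $\bar{\rho}_\id{p}|_{G_{\Q_9}}$ has no $I_{\Q_9}$-fixed vector and Swan conductor equal to $1$, so its residual Artin conductor equals $3$. Since the Swan part is preserved under reduction modulo $\id{p}$ (because $p\neq 3$) and the tame part of any lift is at least as large as the residual tame part, I obtain $\varepsilon_3(\rho_\id{p})\geq 3$, hence equality. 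Combined with trivial Nebentypus, this forces $\rho_\id{p}|_{G_{\Q_9}}$ to be supercuspidal, induced from a character of conductor $3$ on some ramified quadratic extension $L'$ of $\Q_9$, by \cite[Corollary 3.1]{MR3056552}.

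For the second step, I will invoke the preservation of the local type under congruence (\cite[Proposition 1.1]{MR4583916}) applied to $\bar{\rho}_\id{p}|_{G_{\Q_9}}\simeq \bar{\rho}_i|_{G_{\Q_9}}$; this will yield that $\rho_\id{p}|_{G_{\Q_9}}$ and $\rho_i|_{G_{\Q_9}}$ have the same local type and in particular are induced from the same quadratic extension, so $L'=L$. The main subtlety is the applicability of the preservation result, which requires $p$ to be larger than a bound depending on the order of the inducing character; since that character factors through a finite quotient of $G_L$ whose order is bounded in terms of the conductor and residue field of $L$, the argument is valid for $p$ above an explicit constant, consistent with the running assumptions on $p$ in the subsequent applications of the lemma.
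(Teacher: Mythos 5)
Your proof is correct and, at its core, takes the same route as the paper: the paper's entire proof is to combine Lemma~\ref{lemma:type-ghotsol} (which gives the local type of $\rho_i$ at $3$) with the preservation of local inertial types under congruence, citing the same \cite[Proposition~1.1]{MR4583916}. Your second step is exactly this. Your first step — the conductor-exponent bookkeeping showing $\varepsilon_3(\rho_\id{p})=3$ and hence supercuspidality from some $L'$ — is a correct but redundant detour, since the preservation result you invoke in step two already yields the full local type (including the inducing field $L$) in one stroke; also, the upper bound on the conductor exponent of $\rho_\id{p}$ should really be cited from Corollary~\ref{coro:conductor-at-3} rather than Theorem~\ref{thm:spaces}, as the latter controls the residual conductor. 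Your concern about a lower bound on $p$ for the applicability of the congruence-preservation result is reasonable, but it is an issue shared by the paper's one-line proof and is harmless in context, since every later application of the lemma already assumes $p$ lies outside a finite set and above the conjectural constant $C$.
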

%
%
%
\begin{proof}	
  Follows from Lemma \ref{lemma:type-ghotsol} and \cite[Proposition
  1.1]{MR4583916}.
\end{proof}

The following theorem gives sufficient conditions to prove $\bar{\rho}_\id{p}\not\simeq \bar{\rho}_i$, for $i=1,2$.
\begin{thm}
  \label{thm:local-type-3}
  Let $(a,b,c)$ be a primitive solution to~(\ref{eq:5p3}) with $p>3$ satisfying
  that $3 \nmid c$.  Set $t_0=-\frac{a^5}{c^3}$. Then $\HGM^+(t_0)$
  is not congruent modulo $p$ to $\HGM^+(-1/8)$ nor
  to $\HGM^+(9/8)$.
\end{thm}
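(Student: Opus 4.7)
The plan is to derive a contradiction by comparing local types at the prime $3$ of $K=\Q(\sqrt{5})$ (which is inert, so $K_{(3)} = \Q_9$). Assume for contradiction that $\HGM^+(t_0) \equiv \HGM^+(t_i) \pmod{p}$ for some $t_i \in \{-1/8, 9/8\}$. By Lemma~\ref{lemma:same-type-3}, the local type at $(3)$ of $\rho_\id{p}^+$ must coincide with that of $\rho_i$, which is supercuspidal, induced from a character of conductor $3$ of a ramified quadratic extension of $\Q_9$. Combining Lemma~\ref{lemma:congruences} with preservation of local type under mod-$p$ congruence \cite[Proposition~1.1]{MR4583916}, this local type is the same as that of the elliptic curve $E_3^+(t_0)$ at $(3)$. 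It therefore suffices to show that the local type at $3$ of $E_3^+(t_0)$ cannot match those of $E_3^+(-1/8)$ or $E_3^+(9/8)$.

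I would split into cases according to the divisibility of $ab$ by $3$; primitivity combined with $3 \nmid c$ rules out $3 \mid \gcd(a,b)$. If $3 \mid a$, then $v_3(t_0) = 5\, v_3(a) > 3$, and row 1 of Table~\ref{table:local-3} shows $E_3^+(t_0)$ has multiplicative reduction with special local type, incompatible with the supercuspidal ghost type. Similarly, if $3 \mid b$ (and hence $3 \nmid ac$), then $v_3(t_0 - 1) = p\, v_3(b) > 3$ by the relation $a^5 + c^3 = -b^p$, and row 2 of Table~\ref{table:local-3} again produces a special local type.

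In the remaining case $3 \nmid abc$, we have $v_3(t_0) = v_3(t_0-1) = 0$, and the latter forces $t_0 \not\equiv 1 \pmod{3}$, so $t_0 \bmod 9 \in \{2, 5, 8\}$. By rows 3--4 of Table~\ref{table:local-3}, the local type of $E_3^+(t_0)$ is supercuspidal with ramification index $e \in \{4, 12\}$, where $e$ denotes the degree of the minimal extension of $\Q_3^{\mathrm{ur}}$ over which good reduction is attained. For the ghost specializations, one computes $e = 6$: for $E_3^+(-1/8)$, this is essentially carried out in the proof of Lemma~\ref{lemma:Catalan-3}, where the inducing character is identified as having order $6$, and for $E_3^+(9/8)$, an analogous minimal-model computation yields the same value. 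Since $e$ is invariant under unramified base change to $\Q_9$ and under mod-$p$ reduction for $p \neq 3$, the disjointness $\{4, 12\} \cap \{6\} = \emptyset$ gives the desired contradiction.

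The main technical obstacle is confirming $e = 6$ for $E_3^+(9/8)$, since $j = 0$ places us outside the standard Kodaira tables and forces a direct minimal-model analysis at $p=3$, most cleanly handled via Kraus' theorem \cite{Kraus} or the classification in \cite{DFV}. Once this single value is verified, the case analysis above closes the argument.
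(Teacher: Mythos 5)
The central gap is the unverified (and in fact false) claim that the ghost specializations have semistable defect $e=6$. One can see $e=12$ for $E_3^+(-1/8)$ directly from the minimal discriminant: the minimal model over $\Z_3$ is $y^2+6xy-y=x^3$ with $v_3(\Delta_{\min})=5$, and good reduction over a totally ramified degree-$e$ extension of $\Q_3^{\mathrm{ur}}$ requires $12 \mid 5e$, hence $e=12$. (This is also consistent with the paper's cluster-picture analysis, which finds that $\Jac\bigl(\bfC_{5,3}^+(-1/8)\bigr)$ does \emph{not} attain good reduction over the ramification-degree-$6$ splitting field $\Q_9(\mathcal R)$, so the defect is $12$.) The paper's Lemma~\ref{lemma:Catalan-3} identifies the inducing character as having order $6$, but that is the order of a character of $L^\times$ for $L/\Q_9$ ramified quadratic; the image of inertia in the induced representation is dihedral of order $12$, not $6$. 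Since Table~\ref{table:local-3} gives $e=12$ precisely in the case $t_0\equiv 2,8\pmod 9$ --- which is the only case where the conductor exponent matches that of the ghosts --- the invariant $e$ does not discriminate, and your case analysis fails at the decisive step. The paper circumvents this by computing a strictly finer invariant: the explicit degree-$12$ local Galois extension (the polynomials $P$ and $Q$), showing they are non-isomorphic even though conductor, local type family, and $e$ all coincide.

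There are two secondary issues worth flagging. First, your reduction to the elliptic curve $E_3^+(t_0)$ goes through the mod-$(\sqrt{5})$ congruence of Lemma~\ref{lemma:congruences}, whereas the hypothesis is a mod-$p$ congruence; this stacks two reductions, and while the Swan conductor survives both, the full inertial type (including the finer extension data one ultimately needs) is not automatically preserved under the mod-$5$ congruence without an explicit argument. The paper avoids this by working directly with the hyperelliptic model $\bfC_{5,3}^+(t)$ over $K$, which \emph{is} the motive up to Tate twist. Second, Remark~\ref{remark:potentially-good} warns that restriction from $\Q_3$ to $K_{\id{r}}=\Q_9$ can change a supercuspidal type (induced from $\Q_9$) into a principal series; your comparison of types is stated over $\Q_3$ but the relevant base is $\Q_9$, so the matching must be done over $\Q_9$ throughout. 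Your cases $3\mid a$ and $3\mid b$ are fine (special vs.\ supercuspidal is detected by the Swan conductor, which is stable under all the congruences involved), but the case $3\nmid abc$ with $t_0\equiv 2,8\pmod 9$ requires the paper's explicit field computation, not just $e$.
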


\begin{proof}

  Let $K$ be a finite extension of $\Q_p$.
  Let $\bfC$ be a hyperelliptic curve defined over $K$ by an equation
\[
 \bfC: y^2=F(x).
\]
Then by \cite[Theorem 1.9]{MR4566695}, if $\Jac(\bfC)$ has potentially
good reduction, it attains good reduction over an at most quadratic
ramified extension of $K({\mathcal R})$, the splitting field of $F(x)$
(where ${\mathcal R}$ is the set of roots of $F(x)$, following the
authors' notation).  In our case, from (\ref{eq:hyperell-5-p-3}) we have $F(x)=5x^6 - 12x^5 + 10tx^3 + t^2$ and $K=\Q_9$.

For $t_0=-\frac{1}{8}$ we obtain the polynomial
$5x^6-12x^5 -\frac{5}{4}x^3 + \frac{1}{64}$, giving the same extension
as the \emph{reduced} polynomial $x^6+ 3x^4+ 9$ (a reduction algorithm
is implemented in \cite{MR2194887}). The
isomorphisms of fields can also be verified with the \texttt{Pari/GP}
implementation of Panayi's algorithm available in \cite{MR2194887}.	
Its Galois closure $\Q_9({\mathcal R})$ has Galois group $D_{12}$, the
dihedral group with $12$ elements, and ramification degree $6$. To
verify whether the Jacobian of $\bfC_{5,3}^+(-1/8)$ attains good
reduction over $\Q_9({\mathcal R})$ or a quadratic extension of it, we
can compute its cluster picture in \texttt{Magma} using Tim Dokchitser's implementation.
\begin{verbatim}
> ClusterPicture(5*x^6 - 12*x^5 - 5/4*x^3 + 1/64,3);     
((1,2,3),(4,5,6)) F[[7,8]] d=[5/6,5/6,0]
\end{verbatim}
Note that over $\Q_9({\mathcal R})$ it is true that all distances are
integral (since $e=6$), but the last condition of the semistability
criterion (\cite[Definition 1.7]{MR4566695}) is not
satisfied. 
By Lemma \ref{lemma:type-ghotsol}, the local inertial type of
$\Jac(\bfC_{5,3}^+(-1/8))$ at $3$ is the induction of an order $6$
character $\varkappa$ from one of the quadratic ramified extensions of
$\Q_9$.  There are two such extensions, namely $L_1=\Q_9(\sqrt{3})$
and $L_2=\Q_9(\sqrt{3\alpha})$, where $\alpha^2=-1$ in $\Q_9$.

Then the field $\Q_9({\mathcal R})$ is the fixed field of the local
Galois representation obtained from the induction of the character
$\varkappa^2$. We can compute the degree $12$ polynomial corresponding to
$\Q_9({\mathcal R})/\Q_3$ in three different ways:
\begin{itemize}
\item by a direct computation (compute the compositum of the
  polynomial with itself and compute its $3$-adic factors),
\item search the database of David Roberts and John Jones
  \cite{MR2194887} (see also \cite{2507.02360}) for degree $12$
  extensions and eliminate all of them but one,
\item use the fact that our inertial type satisfies the properties
  listed in the introduction of \cite{Nuno}, so the representation
  appears as the $4$-torsion of an elliptic curve (see Theorem 1.1 of
  loc. cit) that is computed by the authors.
\end{itemize}
Using any of these three approaches, the Galois closure is obtained by
adding to $\Q_3$ a root of the degree $12$ polynomial
\[
  P=x^{12}+3x^{11}+3x^{10}-6x^9+3x^8+9x^7+9x^4+9x^3+9.
\]
Using the polynomial $P$ it is easy to verify that
$L_1 \subset \Q_9({\mathcal R})$. We mimic the same computation for
$t_0=\frac{9}{8}$ and get that the reduced polynomial also equals
$x^6+ 3x^4+ 9$. Then the two ghost solutions share the same degree $12$ extension.
%
%
%
%
%
%
%
%
%

Let $(a,b,c)$ be a solution to~(\ref{eq:5p3}) with $3 \nmid c$.
According to Table~\ref{table:cond3} it must be the case that either
$t_0 \equiv -1 \pmod 9$ or $t_0 \equiv 2 \pmod 9$. In both cases, the
reduced polynomial equals $x^6+ 3x^4+ 6x^3+ 9x^2+ 63x+ 9$, and its
Galois closure corresponds to the field extension obtained by adding a
root of
$Q=x^{12}+6x^{11}+21x^{10}+36x^9+30x^8+36x^7+3x^6+36x^5+27x^4-9x^2+36$. But
the extension obtained from $P$ is not isomorphic to that obtained
from $Q$.
\end{proof}

\begin{remark}
  When $3 \mid c$, it occurs sometimes that the local type of $\rho_{\id{p}}$ matches that of the ghost solutions.
\end{remark}

\begin{thmC}
Let $\bfP$ be as in Theorem \ref{thmA}. Assume that
  Conjecture~\ref{conj:large-image} holds for $L=K=\Q(\sqrt{5})$ with
  constant $C$. Then if $p$ is a prime number satisfying that
  $p \not \in \bfP$ and $p > C$, there
  are no non-trivial primitive solutions $(a,b,c)$ to
 \[
 x^5 + y^p + z^3 =0
 \]
 with $3\nmid c$.
\end{thmC}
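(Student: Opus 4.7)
The plan is to apply Theorem~\ref{thmA} to a hypothetical non-trivial primitive solution $(a,b,c)$ with $3\nmid c$, $p\notin\bfP$ and $p>C$, and then to eliminate each of the three listed possibilities in turn. Set $t_0=-a^5/c^3$ and $A=\Jac(\bfC_{5,3}^+(t_0))$; the variety $A$ is of $\GL_2$-type over $K=\Q(\sqrt{5})$, and its associated compatible system contains $\rho_{\id{p}}^+$.

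The first step is to invoke Corollary~\ref{coro:Curve-with-non-CM}, which shows that $A$ has neither complex nor quaternionic multiplication, so that $\End_K(A)\otimes\Q=\End_{\bar{K}}(A)\otimes\Q\simeq K$. This matches the hypothesis of Conjecture~\ref{conj:large-image}, which for $p>C$ yields that the image of $\bar{\rho}_{\id{p}}^+$ contains $\SL_2(\F_{\id{p}})$. I would then use this large-image property to eliminate Cases~(1) and~(2) simultaneously: a reducible representation has image in a Borel subgroup, while the residual representation of a Hilbert modular form over $K$ with complex multiplication by a quadratic extension $E/K$ is induced from a character of $\Gal_E$ and hence has image contained in the normalizer of a Cartan subgroup; for $\id{p}$ of residue characteristic greater than $3$, neither subgroup can contain $\SL_2(\F_{\id{p}})$, yielding a contradiction in both cases.

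Finally, Case~(3) is precisely where the hypothesis $3\nmid c$ enters: it asserts that $\rho_{\id{p}}^+$ is congruent modulo $p$ to $\rho_{\bfC_{5,3}^+(t_0),\id{p}}$ for some $t_0\in\{-1/8,9/8\}$, and Theorem~\ref{thm:local-type-3} is tailor-made to rule out exactly this possibility under $3\nmid c$, by showing that the local inertial types at $3$ of the two ghost specializations are incompatible with those arising from any $t_0=-a^5/c^3$ with $3\nmid c$. Once all three alternatives of Theorem~\ref{thmA} are excluded, no such solution can exist, proving Theorem~\ref{thmC}. I do not foresee a genuine obstacle; the argument is essentially a bookkeeping assembly of previous results, and the main subtlety is simply verifying that Corollary~\ref{coro:Curve-with-non-CM} supplies the complete endomorphism hypothesis required by Conjecture~\ref{conj:large-image}, and that the large-image conclusion genuinely handles both residual reducibility and residual CM in one stroke.
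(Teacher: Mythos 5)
Your argument follows the same outline as the paper, but there is a genuine gap in the step where you assert that Corollary~\ref{coro:Curve-with-non-CM} yields $\End_K(A)\otimes\Q=\End_{\bar{K}}(A)\otimes\Q\simeq K$, where $A=\Jac(\bfC_{5,3}^+(t_0))$. Corollary~\ref{coro:Curve-with-non-CM} excludes complex multiplication and (non-split) quaternionic multiplication, but it does not exclude the possibility that $A$ is isogenous over $\CC$ to a product of elliptic curves. That case is compatible with $A$ being of $\GL_2$-type over $K$ with $\End_K(A)\otimes\Q\simeq K$ and with $A$ having a place of potentially multiplicative reduction (e.g.\ $A\sim_{\bar{K}}E\times E$ with $E$ a non-CM elliptic curve), and in it $\End_{\bar{K}}(A)\otimes\Q$ is $M_2(\Q)$ rather than $K$, so the hypothesis of Conjecture~\ref{conj:large-image} fails and the conjecture cannot be invoked.

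The paper treats this residual case explicitly: if $A$ is isogenous over $\CC$ to a product of elliptic curves, then either the curves are non-CM, in which case the conclusion of Conjecture~\ref{conj:large-image} is unconditionally known (so large image still holds), or they have CM, in which case $A$ would have potentially good reduction at every place, contradicting the potentially multiplicative reduction established in Corollary~\ref{coro:Curve-with-non-CM}. Only after this dichotomy is one entitled to assume that $A$ is absolutely simple and thus that all endomorphisms are defined over $K$ with $\End_{\bar{K}}(A)\otimes\Q\simeq K$. Your remaining steps (using large image to rule out the reducible and CM alternatives of Theorem~\ref{thmA} via Borel and normalizer-of-Cartan image containments, and invoking Theorem~\ref{thm:local-type-3} to rule out the ghost specializations when $3\nmid c$) match the paper and are correct; you just need to supply the decomposable-Jacobian case analysis before appealing to the conjecture.
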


\begin{proof}
  Let $(a,b,c)$ be a putative non-trivial solution to~(\ref{eq:5p3})
  with $3\nmid c$. Set $t_0=-a^5/c^3$.   By
  Corollary~\ref{coro:Curve-with-non-CM}, the Jacobian of $\bfC_{5,3}^+(t_0)$ has a prime of potentially multiplicative reduction.
  
   If the Jacobian of $\bfC_{5,3}^+(t_0)$ is isogenous over $\CC$ to the product of two elliptic curves, then either the curves do not have complex multiplication (in which case Conjecture \ref{conj:large-image} is proven) or they do have complex multiplication and so all bad primes have potentially good reduction, leading to a contradiction.  Theorefore, we can assume that the Jacobian of $\bfC_{5,3}^+(t_0)$ is absolutely simple and does not have complex nor quaternionic multiplication. Then
  Conjecture~\ref{conj:large-image} implies that if $p > C$,
  $\bar{\rho}_{\id{p}}$ cannot be reducible nor lie in the normalizer
  of a Cartan group. Theorem \ref{thmA} then implies that $\bar{\rho}_{\id{p}}$ is
  isomorphic to $\bar{\rho}_1$ or $\bar{\rho}_2$, which contradicts
  Theorem~\ref{thm:local-type-3}.
\end{proof}

\bibliographystyle{alpha}
	\bibliography{biblio1}

\end{document}